\documentclass{article}[12pt]
\usepackage{amsmath,amssymb,amscd,amsthm}
\usepackage{graphics}
\usepackage[dvips]{graphicx}
\usepackage{latexsym}
\usepackage{amsmath,amsfonts}
\usepackage[ansinew]{inputenc}
\usepackage{dsfont}
\usepackage{enumerate}
\usepackage{cite}
\usepackage{tikz}
\usepackage{hyperref}

\setlength{\textwidth}{12.5cm}

\theoremstyle{plain}
\newtheorem{theorem}{Theorem}[section]
\newtheorem{corollary}[theorem]{Corollary}
\newtheorem{proposition}[theorem]{Proposition}
\newtheorem{lemma}[theorem]{Lemma}

\theoremstyle{definition}
\newtheorem{definition}[theorem]{Definition}
\newtheorem{remark}[theorem]{Remark}

\DeclareMathOperator{\Char}{char}

\DeclareMathOperator{\Inn}{IAut}
\DeclareMathOperator{\Mult}{MAut}
\DeclareMathOperator{\Aut}{Aut}

\DeclareMathOperator{\Hom}{Hom}
\DeclareMathOperator{\End}{End}
\DeclareMathOperator{\Der}{Der}
\DeclareMathOperator{\ADer}{ADer}
\DeclareMathOperator{\IDer}{IDer}
\DeclareMathOperator{\Id}{id}

\begin{document}

\title{\textbf{Anti-isomorphisms and involutions on the idealization of the incidence space over the finitary incidence algebra}}
\author{\'{E}rica Z.~Fornaroli\thanks{Corresponding author: ezancanella@uem.br} \ and Roger E.~M.~Pezzott \\
\\
\textit{{\normalsize Departamento de Matem\'atica, Universidade
Estadual de Maring\'a,}} \\ \textit{{\normalsize Avenida Colombo,
5790, Maring\'a, PR, 87020-900, Brazil.}}}
\date{}

\maketitle

\begin{abstract}
Let $K$ be a field and $X$ a partially ordered set (poset). Let $FI(X,K)$ and $I(X,K)$ be the finitary incidence algebra and the incidence space of $X$ over $K$, respectively, and let $D(X,K)=FI(X,K)(+)I(X,K)$ be the idealization of the $FI(X,K)$-bimodule $I(X,K)$. In the first part of this paper, we show that $D(X,K)$ has an anti-automorphism (involution) if and only if $X$ has an anti-automorphism (involution). We also present a characterization of the anti-automorphisms and involutions on $D(X,K)$. In the second part, we obtain the classification of involutions on $D(X,K)$ to the case when $\Char K \neq 2$ and $X$ is a connected poset such that every multiplicative automorphism of $FI(X,K)$ is inner and every derivation from $FI(X,K)$ to $I(X,K)$ is inner (in particular, when $X$ has an element that is comparable with all its elements).\\

\noindent{\bf Keywords:} anti-automorphism, involution, finitary incidence algebras, idealization

\noindent{\bf 2010 MSC:} Primary: 16W10, 16W20; Secondary: 16D20
\end{abstract}

\maketitle

\section{Introduction}
Let $X$ be a partially ordered set (poset, for short) and let $K$ be a field. The \emph{incidence space} $I(X,K)$ of $X$ over $K$ is the $K$-space of functions $f:X\times X\to K$ such that $f(x,y)=0$ if $x\nleq y$. Consider the subspace $FI(X,K)$ formed by all fuctions $f\in I(X,K)$ such that for any $x,y\in X$, $x\leq y$, there is only a finite number of subintervals $[u,v]\subseteq[x,y]$ such that $u\neq v$ and $f(u,v)\neq 0$. This is a $K$-algebra under the convolution product
		$$(fg)(x,y)=\sum_{x\leq z\leq y}f(x,z)g(z,y),$$
for any $f, g\in FI(X,K)$, called the \emph{finitary incidence algebra} of $X$ over $K$. Moreover, $I(X,K)$ is an $FI(X,K)$-bimodule~\cite{KN}. When every interval of $X$ is finite, $X$ is called locally finite and, in this case, $FI(X,K)=I(X,K)$ is known as the \emph{incidence algebra} of $X$ over $K$~\cite{Rota}. When $X$ is a chain of cardinality $n$, then $FI(X,K)\cong UT_n(K)$, the algebra of upper triangular matrices over $K$.

Involutions on $FI(X,K)$ were first studied by Scharlau~\cite{Scharlau} and many years later by Spiegel~\cite{Spiegel05, Spiegel08}. The results \cite[Theorem~2.1(b)]{Scharlau} and \cite[Theorem~1]{Spiegel08} about the classification of involutions on $FI(X,K)$ are incorrect as indicated in~\cite{BL}. In \cite{DKL} was given a complete classification of involutions on $UT_n(K)$ for the case when $\Char K\neq 2$. Later, Brusamarello, Fornaroli and Santulo considered the case when $\Char K\neq 2$ and $X$ has an element that is comparable with all its elements and obtained a general classification of  involutions on $FI(X,K)$ for the case when $X$ is finite in \cite{BFS11} and locally finite in \cite{BFS12}. Finally, in \cite{BFS14}, they generalized this classification for the case when $X$ is connected and every multiplicative automorphism of $FI(X,K)$ is inner.

Since $I(X,K)$ is an $FI(X,K)$-bimodule, it is natural to consider the idealization $D(X,K)=FI(X,K)(+)I(X,K)$ of $I(X,K)$. Such idealization was first studied by Dugas~\cite{DMH} and then by Dugas and Wagner~\cite{DW}. In this paper we prove that $D(X,K)$ has an involution if and only if $X$ has an involution (Corollary~\ref{coroiffinvolution}). We also give a characterization of the anti-automorphisms and involutions on $D(X,K)$ (Theorem~\ref{teoDecinvDP}) similar to the one obtained in \cite{DW} for the automorphisms of $D(X,K)$. Finally, we consider a connected poset $X$ such that every multiplicative automorphism of $FI(X,K)$ is inner and every derivation from $FI(X,K)$ to $I(X,K)$ is inner, and we give a general classification of involutions on $D(X,K)$ when $\Char K\neq 2$.

The descriptions of the automorphisms and derivations of $FI(X,K)$ and their generalizations were given in \cite{AutK,KD}. Conditions for a multiplicative automorphism of $FI(X,K)$ to be inner and for a derivation of $FI(X,K)$ to be inner were given in \cite{BMUL,ER} for the case when $X$ is finite and connected.

Our work is organized as follows. In Section~\ref{SPreli} we fix the notations and recall some results that will be used throughout the work. In Section~\ref{SantieInvo} we consider a field $K$ and posets $X,Y$ and prove that there is an anti-isomorphism $D(X,K) \to D(Y,K)$ if and only if there is an anti-isomorphism $X \to Y$ (Theorem~\ref{teoantiisoDP}). As a consequence, $D(X,K)$ has an involution if and only if $X$ has an involution (Corollary~\ref{coroiffinvolution}). In Section~\ref{SAutandDer} we give a characterization of the automorphisms of $D(X,K)$ in the case when $X$ is connected, every multiplicative automorphism of $FI(X,K)$ is inner and every derivation from $FI(X,K)$ to $I(X,K)$ is inner (Proposition~\ref{propallcomparableAutDP}). In Section~\ref{SCharac} we present a characterization of the anti-automorphisms and involutions on $D(X,K)$ (Theorem~\ref{teoDecinvDP}) and also a more simplified characterization if $X$ is a connected poset such that every multiplicative automorphism of $FI(X,K)$ is inner and every derivation from $FI(X,K)$ to $I(X,K)$ is inner (Corollary~\ref{coroallcomparableAntiauttDP}). In Section~\ref{Sclassifi}, we consider a field $K$ of characteristic different from $2$ and a connected poset $X$ such that every multiplicative automorphism of $FI(X,K)$ is inner and every derivation from $FI(X,K)$ to $I(X,K)$ is inner and we give the classification of involutions on $D(X,K)$ via inner automorphisms (Theorems~\ref{teoclassivazio} and \ref{teoclassinoempty}). Finally, we use such a classification to obtain the general classification of involutions on $D(X,K)$ (Theorems~\ref{teoeqivaGeral}, \ref{teoclassigeralnoFP}, \ref{teoclassigeralonepoint} and \ref{teoclassigeralfinal}).

\section{Preliminaries} \label{SPreli}

\subsection{Rings and modules}\label{SPreli1}

Throughout the paper rings are associative with unity. The center of a ring $R$ is denoted by $Z(R)$ and the set of invertible elements of $R$ is denoted by $U(R)$.

Let $R$ be a ring and let $M, N$ be left $R$-modules. We use the notations $\Hom_R(M,N) = \{ \phi: M \to N : \phi$  is  $R$-linear$\}$, $\End_R(M)=\Hom_R(M,M)$ and $\Aut_R(M)=U(\End_R(M))$. Each $\Phi \in  \End_R(M\oplus N)$ has a presentation
$$ \Phi= \begin{bmatrix}
\phi_{11}& \phi_{12}\\
\phi_{21}&  \phi_{22}
\end{bmatrix},$$
where $\phi_{11} \in \End_R(M)$, $\phi_{12} \in \Hom_R(N,M)$, $\phi_{21} \in \Hom_R(M,N)$ and $\phi_{22} \in \End_R(N)$, such that for any $\begin{bmatrix}
m\\n
\end{bmatrix} \in M\oplus N$,
$$ \Phi\left(\begin{bmatrix}
m\\n
\end{bmatrix} \right) = \begin{bmatrix}
\phi_{11}(m) + \phi_{12}(n)\\ \phi_{21}(m) + \phi_{22}(n)
\end{bmatrix} .$$

Let $R$ be a ring and $M$ an $R$-bimodule. We recall that  $R\oplus M$ is a ring with the multiplication
$$\begin{bmatrix}r\\m \end{bmatrix}\begin{bmatrix}s\\n \end{bmatrix} = \begin{bmatrix}rs\\rn +ms \end{bmatrix},$$
for all $r,s \in R$ and $m,n \in M$. This ring is denoted by $R(+)M$ and is called \emph{idealization of the $R$-bimodule $M$} (or \emph{trivial extension of $R$ by $M$}). Observe that $0_{R(+)M} = \begin{bmatrix}0_R\\0_M \end{bmatrix}$ and $1_{R(+)M} = \begin{bmatrix}1_R\\0_M \end{bmatrix}.$ It is easy to check that $\begin{bmatrix}r\\m \end{bmatrix} \in U(R(+)M)$ if and only if $r \in U(R)$ and, in this case, $\begin{bmatrix}r\\m \end{bmatrix}^{-1} = \begin{bmatrix}r^{-1}\\-r^{-1}mr^{-1} \end{bmatrix}$.

For any $R$-bimodule $M$, we use the notations $C_{M}(R) = \{ m \in M: mr=rm,$ for all $r \in R\}$ and $C_{R}(M) = \{ r \in R: rm = mr,\text{ for all } m \in M\}$.

\begin{proposition} \label{propcentroidealizacaoRmodulos}
Let $R$ be a ring and $M$ an $R$-bimodule. Then
$$Z(R(+)M)= \left\{\begin{bmatrix}r\\m \end{bmatrix}: r \in Z(R)\cap C_{R}(M) \text{ and } m \in C_{M}(R)  \right\}.$$
\end{proposition}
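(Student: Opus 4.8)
The plan is to unravel the condition $\begin{bmatrix} r\\m \end{bmatrix}\in Z(R(+)M)$ directly from the multiplication rule of the idealization. Given $\begin{bmatrix} r\\m \end{bmatrix},\begin{bmatrix} s\\n \end{bmatrix}\in R(+)M$, the two products are
$$\begin{bmatrix} r\\m \end{bmatrix}\begin{bmatrix} s\\n \end{bmatrix}=\begin{bmatrix} rs\\rn+ms \end{bmatrix},\qquad \begin{bmatrix} s\\n \end{bmatrix}\begin{bmatrix} r\\m \end{bmatrix}=\begin{bmatrix} sr\\sm+nr \end{bmatrix},$$
so $\begin{bmatrix} r\\m \end{bmatrix}$ is central precisely when $rs=sr$ and $rn+ms=sm+nr$ for all $s\in R$ and all $n\in M$.

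The first equation, holding for every $s\in R$, is by definition the statement $r\in Z(R)$. To exploit the second equation I would specialize its free variables: taking $n=0_M$ yields $ms=sm$ for all $s\in R$, i.e. $m\in C_M(R)$, while taking $s=0_R$ yields $rn=nr$ for all $n\in M$, i.e. $r\in C_R(M)$. This establishes the inclusion $\subseteq$.

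For the reverse inclusion, assume $r\in Z(R)\cap C_R(M)$ and $m\in C_M(R)$. Then for arbitrary $s\in R$ and $n\in M$ we have $rs=sr$ (since $r\in Z(R)$), $rn=nr$ (since $r\in C_R(M)$) and $ms=sm$ (since $m\in C_M(R)$), hence $rn+ms=nr+sm=sm+nr$; combined with $rs=sr$ this shows $\begin{bmatrix} r\\m \end{bmatrix}$ commutes with every element of $R(+)M$, so it lies in $Z(R(+)M)$.

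The whole argument is a short direct computation, so there is no genuine obstacle; the only point that deserves a moment's care is the choice of the specializations $n=0_M$ and $s=0_R$, which is exactly what decouples the ``module-commuting'' requirements, separating the constraint on $m$ from the extra constraint $r\in C_R(M)$ on $r$.
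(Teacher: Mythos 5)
Your argument is correct and is essentially identical to the paper's proof: your specializations $n=0_M$ and $s=0_R$ are exactly the paper's choice of testing centrality against $\begin{bmatrix}s\\0\end{bmatrix}$ and $\begin{bmatrix}0\\n\end{bmatrix}$, and the reverse inclusion is the same direct computation. Nothing to change.
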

\begin{proof}
Let $s \in R$ and $n \in M$. If $\begin{bmatrix}r\\m \end{bmatrix} \in Z(R(+)M)$, then
$\begin{bmatrix}r\\m \end{bmatrix} \begin{bmatrix}s\\0 \end{bmatrix} = \begin{bmatrix}s\\0 \end{bmatrix}\begin{bmatrix}r\\m \end{bmatrix}$ and $\begin{bmatrix}r\\m \end{bmatrix} \begin{bmatrix}0\\n \end{bmatrix} = \begin{bmatrix}0\\n \end{bmatrix}\begin{bmatrix}r\\m \end{bmatrix}.$
Thus, $r \in Z(R)\cap C_{R}(M)$ and $m \in C_{M}(R)$. On the other hand, let $\begin{bmatrix}r\\m \end{bmatrix} \in R(+)M$ with $r \in Z(R)\cap C_{R}(M)$  and $m \in C_{M}(R)$. Then
$$\begin{bmatrix}r\\m \end{bmatrix} \begin{bmatrix}s\\n \end{bmatrix} = \begin{bmatrix}rs\\rn+ms \end{bmatrix}= \begin{bmatrix}sr\\nr+sm \end{bmatrix} =  \begin{bmatrix}s\\n \end{bmatrix}\begin{bmatrix}r\\m \end{bmatrix}$$
and, therefore, $\begin{bmatrix}r\\m \end{bmatrix} \in Z(R(+)M)$.
\end{proof}

\subsection{Automorphisms and involutions of an algebra} \label{SPreli2}

Let $K$ be a field and let $A$ be a $K$-algebra. We denote by $\Aut(A)$ the group of ($K$-linear) automorphisms of $A$. For any $a \in U(A)$, $\Psi_a$ denotes the inner automorphism defined by $a$, i.e., $\Psi_a : A\to A$ is such that $\Psi_a(x) = axa^{-1}$ for all $x \in A$. The group of inner automorphisms of $A$ is denoted by $\Inn(A)$.

For us, involutions on $A$ are ($K$-linear) anti-automorphisms of order $2$.

\begin{proposition} \label{propAutInner}
Let $K$ be a field and $A$ a $K$-algebra. For any $a, b \in U(A)$ and $\phi, \varphi$ involutions on $A$ we have:
\begin{enumerate}
\item[(i)] $\Psi_a = \Psi_b$ if and only if $a b^{-1} \in Z(A)$.
\item[(ii)] The anti-automorphism $\Psi_a \circ \phi$ is an involution on $A$ if and only if $\Psi_{a}= \Psi_{\phi(a)}$.
\item[(iii)] If $\phi(a) = \pm a$, then $\Psi_a \circ \phi$ is an involution on $A$. On the other hand, if $A$ is a central algebra and $\Psi_a \circ \phi$ is an involution on $A$, then $\phi(a)=\pm a$.
\item[(iv)] There exists $\Psi \in \Inn(A)$ such that $\Psi\circ \phi = \varphi\circ\Psi$ if and only if $\phi = \Psi_{v\varphi(v)}\circ \varphi$, for some $v \in U(A)$.
\end{enumerate}
\end{proposition}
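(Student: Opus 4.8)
The plan is to prove each of the four parts (i)--(iv) by direct computation, since all of them are essentially formal identities about inner automorphisms and anti-automorphisms; the work is bookkeeping rather than insight.

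For (i): I would note that $\Psi_a = \Psi_b$ means $axa^{-1} = bxb^{-1}$ for all $x \in A$, equivalently $(b^{-1}a)x = x(b^{-1}a)$ for all $x$, i.e. $b^{-1}a \in Z(A)$; and since $Z(A)$ is closed under inverses and $ab^{-1}$ is conjugate to $b^{-1}a$ (or directly, $ab^{-1} = b(b^{-1}a)b^{-1}$, and a central element is fixed by conjugation), this is the same as $ab^{-1}\in Z(A)$. For (ii): an anti-automorphism $\Psi_a\circ\phi$ is an involution iff $(\Psi_a\circ\phi)^2 = \Id$. I would compute $(\Psi_a\circ\phi)(\Psi_a\circ\phi)(x) = \Psi_a(\phi(a\phi(x)a^{-1})) = \Psi_a(\phi(a)^{-1}\phi^2(x)\phi(a)) = a\phi(a)^{-1}x\phi(a)a^{-1}$, using that $\phi$ is an anti-automorphism of order $2$. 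This equals $x$ for all $x$ exactly when $a\phi(a)^{-1}\in Z(A)$, which by (i) is exactly $\Psi_a = \Psi_{\phi(a)}$.

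Part (iii) follows immediately from (ii): if $\phi(a) = \pm a$ then $\Psi_{\phi(a)} = \Psi_{\pm a} = \Psi_a$ (scalars are central), so $\Psi_a\circ\phi$ is an involution. Conversely, if $A$ is central and $\Psi_a\circ\phi$ is an involution, then by (ii) $a\phi(a)^{-1}\in Z(A) = K\cdot 1$, so $a = \lambda\phi(a)$ for some $\lambda\in K^\times$; applying $\phi$ gives $\phi(a) = \lambda\phi^2(a)\cdot(\text{something})$ — more carefully, $\phi(a) = \phi(\lambda\phi(a)) = \lambda a = \lambda^2\phi(a)$, hence $\lambda^2 = 1$, so $\lambda = \pm 1$. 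For (iv): given $\Psi = \Psi_v\in\Inn(A)$ with $\Psi\circ\phi = \varphi\circ\Psi$, I would evaluate both sides on $x$: $v\phi(x)v^{-1} = \varphi(vxv^{-1}) = \varphi(v)^{-1}\varphi(x)\varphi(v)$, so $\phi(x) = v^{-1}\varphi(v)^{-1}\varphi(x)\varphi(v)v = (v^{-1}\varphi(v)^{-1})\varphi(x)(v^{-1}\varphi(v)^{-1})^{-1}$. Hmm, that gives $\phi = \Psi_{(v\varphi(v))^{-1}}\circ\varphi$; since $\Psi_{(v\varphi(v))^{-1}} = \Psi_{v\varphi(v)}^{-1}$ and for an involution one checks $\Psi_{w}^{-1}\circ\varphi = \Psi_{\varphi(w)^{-1}}\circ\varphi$ style manipulations, I would massage this into the stated form $\phi = \Psi_{v\varphi(v)}\circ\varphi$ — the cleanest route is to instead set $w = \varphi(v)$ or replace $v$ by $v^{-1}$ from the start, since $\Inn(A)$ being a group lets me reparametrize freely. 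Conversely, if $\phi = \Psi_{v\varphi(v)}\circ\varphi$, I would verify directly that $\Psi_v\circ\phi = \varphi\circ\Psi_v$ by computing both sides.

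The main obstacle I anticipate is purely notational: in (iv) the identity $\Psi\circ\phi = \varphi\circ\Psi$ is not symmetric in an obvious way, and one must be careful about where inverses land when pushing an anti-automorphism past an (ordinary) automorphism — specifically the rule $\varphi\circ\Psi_v = \Psi_{\varphi(v)^{-1}}\circ\varphi$ (note the inverse, because $\varphi$ reverses the order of the conjugating action). Getting the parametrization to match the stated form $v\varphi(v)$ exactly, rather than $(v\varphi(v))^{-1}$ or $\varphi(v)v$, requires choosing the right substitution at the outset; I expect to resolve this by replacing $v$ with $v^{-1}$ or by using that $\phi^2 = \Id$ to simplify $\phi(v\varphi(v)) = v\varphi(v)$, which shows $v\varphi(v)$ is fixed by $\phi$ and lets part (iii)-type reasoning clean things up.
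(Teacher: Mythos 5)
Your proposal is correct, and for parts (i)--(iii) it follows essentially the same route as the paper: (ii) via the identity $(\Psi_a\circ\phi)^2=\Psi_{a\phi(a)^{-1}}$, and (iii) by combining (i), (ii) and $\phi^2=\Id$ to force the central scalar to square to $1$. The only real difference is in (iv), which the paper does not prove but simply cites as \cite[Lemma~5]{BFS11}; your direct computation is a valid self-contained substitute. One small slip in your narration there: evaluating $\Psi_v\circ\phi=\varphi\circ\Psi_v$ yields $\phi=\Psi_{(\varphi(v)v)^{-1}}\circ\varphi$, not $\Psi_{(v\varphi(v))^{-1}}\circ\varphi$ (the two conjugating elements are conjugate but need not define the same inner automorphism); your explicit formula $\phi(x)=\bigl(v^{-1}\varphi(v)^{-1}\bigr)\varphi(x)\bigl(v^{-1}\varphi(v)^{-1}\bigr)^{-1}$ is nevertheless right, and since $v^{-1}\varphi(v)^{-1}=w\varphi(w)$ with $w=v^{-1}$, the substitution $v\mapsto v^{-1}$ you propose lands exactly on the stated form, and the converse check with $\Psi=\Psi_{v^{-1}}$ goes through.
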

\begin{proof}
The proof of (i) is obvious. For (ii) just see that $(\Psi_a \circ \phi)^2 = \Psi_{a} \circ \Psi_{\phi(a)^{-1}}$. Since (iv) is \cite[Lemma~5]{BFS11}, it remains to prove (iii). If $\phi(a) = \pm a$, then $\Psi_a \circ \phi$ is an involution by (ii).  Conversely, if $A$ is a central algebra and $\Psi_a \circ \phi$ is an involution, then there is $k \in K$ such that $\phi(a)=ka$, by (i) and (ii). Thus,
\begin{align*}
    a = \phi^{2}(a) = k\phi(a) = k^2a \Rightarrow (k^2-1)a =0 \Rightarrow k^2-1=0 \Rightarrow k= \pm 1.
\end{align*}
Therefore, $\phi(a) = \pm a$.
\end{proof}
Let $\phi$ be an involution on a $K$-algebra $A$ and let $a \in A$. If $\phi(a)=a$, then $a$ is said \emph{$\phi$-symmetric} and if $\phi(a)=-a$, then $a$ is said \emph{$\phi$-skew-symmetric}.

\subsection{Finitary incidence algebras}\label{SPreli3}

Let $X$ and $Y$ be posets. An \emph{isomorphism} (resp.~\emph{anti-isomorphism}) from $X$ to $Y$ is a bijective map $\mu:X \to Y$ that satisfies the following property for any $x,y \in X$:
$$ x\leq y \Leftrightarrow \mu(x) \leq \mu(y) \text{ (resp. } \mu(y) \leq \mu(x)\text{)}.$$
When $X=Y$, $\mu$ is also called an \emph{automorphism} (resp.~\emph{anti-automorphism}). An anti-automorphism $\mu:X \to X$ of order $2$ is an \emph{involution} on $X$.

Let $K$ be a field. If $\alpha$ is an isomorphism (resp.~$\lambda$ is an anti-isomorphism) from $X$ to $Y$, then $\alpha$  (resp.~$\lambda$) induces an isomorphism (resp.~anti-isomorphism)  $\widehat{\alpha}$ (resp.~$\rho_\lambda$) from $FI(X,K)$ to $FI(Y,K)$ defined by $\widehat{\alpha}(f)(x, y) = f(\alpha^{-1}(x), \alpha^{-1}(y))$ $(\rho_\lambda(f)(x, y) = f(\lambda^{-1}(y), \lambda^{-1}(x)))$, for all $f \in FI(X,K)$ and
$x, y \in Y$. In particular, when $X =Y$,
$\widehat{\alpha}$ is an automorphism and $\rho_\lambda$ is an anti-automorphism of $FI(X,K)$. Moreover, if $\lambda$ is an involution, so is $\rho_\lambda$.

We denote by $\delta$ the unity of $FI(X,K)$ which is given by
$$\delta(x,y) =
\begin{cases}
1 & \text{if } x=y\\
0 & \text{if } x\neq y
\end{cases}.$$

Given $x,y \in X$ such that $x\leq y$, we denote by $e_{xy}$ the element of $FI(X,K)$ defined by
$$e_{xy}(u,v) =
\begin{cases}
1 & \text{if }(u,v)=(x,y)\\
0 & \text{otherwise}
\end{cases},$$
and we write $e_x$ for $e_{xx}$. If $\alpha$ is an automorphism and $\lambda$ an anti-automorphism of $X$, then
\begin{align*}
    \widehat{\alpha}(e_{xy}) = e_{\alpha(x)\alpha(y)}\quad \text{and}\quad  \rho_\lambda(e_{xy}) = e_{\lambda(y)\lambda(x)},
\end{align*}
for all $x \leq y$ in $X$.

An element $\sigma \in I(X,K)$ such that $\sigma(x,y)\neq 0$ for all $x \leq y$, and  $\sigma(x,y)\sigma(y,z)=\sigma(x,z)$  whenever $x\leq y\leq z$, determines an automorphism $M_\sigma$ of $FI(X,K)$ by $M_{\sigma}(f)(x,y)=\sigma(x,y)f(x,y)$, for all $f \in FI(X,K)$ and $x,y \in X$. Such automorphisms are called \emph{multiplicative}. We denote the set of all multiplicative automorphisms of $FI(X,K)$ by $\Mult(FI(X,K))$.

\begin{theorem} \label{DecoAutFIP}
 Let $X$ be a poset and let $K$ be a field. If $\Phi$ is an automorphism (anti-automorphism, involution) of $FI(X,K)$, then $\Phi = \Psi \circ M \circ \varphi$, where $\Psi \in \Inn(FI(X,K))$, $M\in \Mult(FI(X,K))$ and $\varphi$ is the automorphism (anti-automorphism, involution) induced by an automorphism (anti-automorphism, involution) of $X$.
\end{theorem}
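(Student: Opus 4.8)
The plan is to peel off the three factors $\Psi$, $M$, $\varphi$ one at a time, relying on two standard structural features of $FI(X,K)$: (a) any two complete families of orthogonal primitive idempotents of $FI(X,K)$ are conjugate by a unit, so that on the canonical family $\{e_x : x \in X\}$ an (anti-)automorphism of $FI(X,K)$ is, after composing with a suitable inner automorphism, merely a relabelling $e_x \mapsto e_{\pi(x)}$ by a bijection $\pi$ of $X$; and (b) for $x,y \in X$ one has $e_x\,FI(X,K)\,e_y = Ke_{xy} \neq 0$ if $x \leq y$ and $e_x\,FI(X,K)\,e_y = 0$ otherwise, so the order of $X$ is readable inside the algebra. (When $X$ is infinite one also uses the known fact that (anti-)automorphisms of $FI(X,K)$ are continuous for the finitary topology and hence respect the summable identity $\delta = \sum_{x} e_x$.) So let $\Phi$ be an automorphism of $FI(X,K)$. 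By (a) pick a unit $a$ and a bijection $\pi$ of $X$ with $(\Psi_a \circ \Phi)(e_x) = e_{\pi(x)}$ for all $x$; as $\Psi_a \circ \Phi$ is an algebra automorphism it carries $e_x\,FI(X,K)\,e_y$ onto $e_{\pi(x)}\,FI(X,K)\,e_{\pi(y)}$, so (b) forces $x \leq y \Leftrightarrow \pi(x) \leq \pi(y)$, i.e. $\pi \in \Aut(X)$. Then $\Phi' = \widehat{\pi}^{\,-1} \circ \Psi_a \circ \Phi$ is an automorphism fixing every $e_x$, hence $\Phi'(e_{xy}) = e_x \Phi'(e_{xy}) e_y \in Ke_{xy}$, say $\Phi'(e_{xy}) = \sigma(x,y)e_{xy}$; applying $\Phi'$ to $e_{xx} = e_x$ and to $e_{xy}e_{yz} = e_{xz}$ shows that $\sigma$ is a multiplicative function, and comparing $e_x \Phi'(f) e_y = \Phi'(e_x f e_y) = \sigma(x,y)f(x,y)e_{xy}$ with the identity $e_x \Phi'(f) e_y = \Phi'(f)(x,y)e_{xy}$ gives $\Phi'(f)(x,y) = \sigma(x,y)f(x,y)$, that is $\Phi' = M_\sigma \in \Mult(FI(X,K))$. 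Unwinding, $\Phi = \Psi_a^{-1} \circ \widehat{\pi} \circ M_\sigma$, and since a direct check on the $e_{xy}$ gives $\widehat{\pi} \circ M_\sigma = M_{\sigma'} \circ \widehat{\pi}$ for a suitable multiplicative $\sigma'$, we obtain $\Phi = \Psi \circ M \circ \widehat{\pi}$ with $\Psi \in \Inn(FI(X,K))$, $M \in \Mult(FI(X,K))$ and $\widehat{\pi}$ induced by $\pi \in \Aut(X)$.

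If instead $\Phi$ is an anti-automorphism, the same argument applies with products reversed: $\Psi_a \circ \Phi$ again permutes $\{e_x\}$ via a bijection $\pi$, but now carries $e_x\,FI(X,K)\,e_y$ onto $e_{\pi(y)}\,FI(X,K)\,e_{\pi(x)}$, so (b) yields $x \leq y \Leftrightarrow \pi(y) \leq \pi(x)$, i.e. $\pi$ is an anti-automorphism of $X$. Then $\rho_\pi^{-1} \circ \Psi_a \circ \Phi$ is an automorphism fixing every $e_x$, hence equals some $M_\sigma$ by the first paragraph, and with $\rho_\pi \circ M_\sigma = M_{\sigma'} \circ \rho_\pi$ we obtain $\Phi = \Psi \circ M \circ \rho_\pi$ of the required form.

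Finally, let $\Phi$ be an involution. By the previous paragraph $\Phi = \Theta \circ \rho_\lambda$, where $\Theta$ lies in the subgroup $G = \Inn(FI(X,K)) \cdot \Mult(FI(X,K))$ of $\Aut(FI(X,K))$ and $\lambda$ is an anti-automorphism of $X$; the remaining point is that $\lambda$ can be chosen to be an involution. Conjugation by $\rho_\lambda$, inside the group of all automorphisms and anti-automorphisms of $FI(X,K)$, carries $\Inn(FI(X,K))$ and $\Mult(FI(X,K))$ to themselves, hence $G$ to itself; since $\rho_\lambda^{2} = \widehat{\lambda^2}$, the identity $\Id = \Phi^2 = \Theta \circ (\rho_\lambda \circ \Theta \circ \rho_\lambda^{-1}) \circ \widehat{\lambda^2}$ shows $\widehat{\lambda^2} \in G$, and hence also $\widehat{\lambda^{-2}} = (\widehat{\lambda^2})^{-1} \in G$. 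The crux is the elementary observation that if $\gamma \in \Aut(X)$ and $\widehat{\gamma} = \Psi_u \circ M_\mu \in G$, then $u\,e_x = e_{\gamma(x)}\,u\,e_x \in e_{\gamma(x)}\,FI(X,K)\,e_x$ for every $x$, and as $u$ is invertible this space is nonzero, i.e. $\gamma(x) \leq x$. Applying this to $\gamma = \lambda^2$ and to $\gamma = \lambda^{-2}$ gives $\lambda^2(x) \leq x$ and $\lambda^{-2}(x) \leq x$ for all $x$; the latter, after applying $\lambda^2$, becomes $x \leq \lambda^2(x)$, whence $\lambda^2 = \Id$ and $\lambda$ is an involution of $X$. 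Thus $\Phi = \Theta \circ \rho_\lambda$ has the stated form with $\varphi = \rho_\lambda$ induced by the involution $\lambda$.

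The steps I expect to cost the most are the two ``known'' inputs used above: (a), the conjugacy of complete families of orthogonal primitive idempotents, and the continuity of (anti-)automorphisms of $FI(X,K)$, both genuinely delicate when $X$ is infinite and each needing a separate argument. Once these are in place the automorphism and anti-automorphism cases are bookkeeping, and the only real remaining subtlety — promoting the poset anti-automorphism to an involution — collapses, as above, to the order-$2$ argument the moment $\widehat{\lambda^2} \in G$ is isolated.
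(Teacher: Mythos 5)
Your argument is correct, but be aware that the paper does not actually prove this theorem: it is quoted with references, the automorphism case to Lemma~3 of \cite{AutK} and the anti-automorphism/involution cases to Theorem~3.5 of \cite{BFS14}. What you have written is a faithful reconstruction of the route those references take, so the comparison is really between your reconstruction and theirs. The bookkeeping steps check out: reading the order of $X$ off $e_x\,FI(X,K)\,e_y\neq 0$, identifying an (anti-)automorphism fixing every $e_x$ with an $M_\sigma$ via $e_x\Phi'(f)e_y=\Phi'(e_xfe_y)$, and commuting $\widehat{\pi}$ or $\rho_\pi$ past $M_\sigma$ are all done correctly and match the cited proofs. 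Two remarks. First, as you yourself flag, essentially all of the difficulty is concentrated in your input (a), and the blanket form in which you state it (``any two complete families of orthogonal primitive idempotents are conjugate by a unit'') is stronger than what the literature establishes and than what you need; what \cite{KN} and \cite{AutK} actually prove is that each $\Phi(e_x)$ is individually conjugate to a unique $e_{\pi(x)}$ and that, for the particular family $\{\Phi(e_x)\}$, a single global conjugating unit can be assembled --- that assembly being the genuinely delicate point for infinite $X$ --- so you should invoke that precise instance rather than the general claim. Second, your handling of the involution case is a correct but slightly different packaging of the usual one: you deduce $\lambda^2=\Id$ from $\widehat{\lambda^2}\in\Inn(FI(X,K))\cdot\Mult(FI(X,K))$ via the two inequalities $\lambda^2(x)\le x$ and $\lambda^{-2}(x)\le x$, whereas the route used elsewhere in this paper (e.g.\ in the proof of Theorem~\ref{teoantiisoDP}) is to note that $\Phi^2=\Id$ forces $e_x$ to be conjugate to $e_{\lambda^2(x)}$ and that distinct $e_x,e_y$ are never conjugate, since conjugacy would give both $x\le y$ and $y\le x$. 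The two arguments rest on the same fact (b) and are equally valid; yours carries the minor extra cost of verifying that $\Inn(FI(X,K))\cdot\Mult(FI(X,K))$ is a subgroup stable under conjugation by $\rho_\lambda$, which you do correctly.
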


For automorphisms, the theorem above follows from \cite[Lemma~3]{AutK}. For anti-automorphisms and involutions it is \cite[Theorem~3.5]{BFS14}.

\begin{remark}
The automorphism (anti-automorphism, involution) $\varphi$ in the theorem above is equal to $\widehat{\alpha}$ (resp.~$\rho_\lambda$), where $\alpha$ (resp.~$\lambda$) is the automorphism (anti-automorphism, involution) of $X$ induced by $\Phi$.
(See proofs of \cite[Lemma~3]{AutK} and \cite[Theorem~3.5]{BFS14}).
\end{remark}

Now, by \cite[Proposition~2]{DW},
\begin{equation} \label{eqcenralizadores}
    C_{FI(X,K)}(I(X,K))= C_{I(X,K)}(FI(X,K)) = Z(FI(X,K))
\end{equation}
and $Z(FI(X,K))$ is the set of all diagonal functions which are constant on each connected component of $X$. Thus, if $X$ is connected, then $FI(X,K)$ is a central algebra.

Consider the idealization $D(X,K)=FI(X,K)(+)I(X,K)$. It follows from~\eqref{eqcenralizadores} and Proposition~\ref{propcentroidealizacaoRmodulos} that
\begin{equation}\label{eqcentroDXK}
    Z(D(X,K))= \left\{\begin{bmatrix}f\\i \end{bmatrix}: f,i \in Z(FI(X,K))  \right\}.
\end{equation}
In particular, for $k_1,k_2 \in K$, $\begin{bmatrix}k_1\delta \\ k_2\delta \end{bmatrix} \in Z(D(X,K))$. To make notations shorter we will write $\begin{bmatrix}k_1\delta \\k_2\delta \end{bmatrix}=c_{k_1,k_2}$.

\begin{proposition}\label{propCenDXKconnected}
Let $K$ be a field and let $X$ be a poset. If $X$ is connected, then $Z(D(X,K)) = \{c_{k_1,k_2}: k_1,k_2 \in K\}$.
\end{proposition}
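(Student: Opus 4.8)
The plan is to apply the already-established formula \eqref{eqcentroDXK} together with the description of $Z(FI(X,K))$ recalled just before it. By \eqref{eqcentroDXK}, an element $\begin{bmatrix}f\\i\end{bmatrix}$ lies in $Z(D(X,K))$ precisely when both $f$ and $i$ belong to $Z(FI(X,K))$. Now the text quotes from \cite[Proposition~2]{DW} that $Z(FI(X,K))$ consists of all diagonal functions that are constant on each connected component of $X$. Since $X$ is assumed connected, there is only one connected component, so a diagonal function constant on each component is simply a scalar multiple of $\delta$; that is, $Z(FI(X,K)) = \{k\delta : k \in K\}$.

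Combining these two facts, $\begin{bmatrix}f\\i\end{bmatrix} \in Z(D(X,K))$ if and only if $f = k_1\delta$ and $i = k_2\delta$ for some $k_1, k_2 \in K$, which is exactly the statement that $Z(D(X,K)) = \{c_{k_1,k_2} : k_1,k_2 \in K\}$ under the notational convention $c_{k_1,k_2} = \begin{bmatrix}k_1\delta\\k_2\delta\end{bmatrix}$ introduced in the paragraph preceding the proposition. So the proof is essentially a two-line deduction: invoke \eqref{eqcentroDXK}, then specialize the description of $Z(FI(X,K))$ to the connected case.

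There is no real obstacle here; the proposition is a corollary of results already in place, and the only thing to be careful about is citing the correct ingredients (the centralizer identities \eqref{eqcenralizadores}, which feed into \eqref{eqcentroDXK} via Proposition~\ref{propcentroidealizacaoRmodulos}, and the structure of $Z(FI(X,K))$ from \cite{DW}). I would keep the write-up to a single short paragraph, perhaps noting explicitly that for connected $X$ the equality $Z(FI(X,K)) = K\delta$ holds, and then reading off the claim from \eqref{eqcentroDXK}.
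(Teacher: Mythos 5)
Your proof is correct and follows exactly the paper's own route: the paper likewise observes that connectedness makes $FI(X,K)$ a central algebra (i.e.\ $Z(FI(X,K))=K\delta$) and then reads the claim off from \eqref{eqcentroDXK}. Nothing is missing.
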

\begin{proof}
If $X$ is connected, then $FI(X,K)$ is a central algebra and the result follows from \eqref{eqcentroDXK}.
\end{proof}

\section{Anti-isomorphisms and involutions on $D(X,K)$}\label{SantieInvo}

Let $X$ and $Y$ be posets and let $K$ be a field. In \cite[Theorem~2.3]{DW}, the authors proved that $D(X,K)$ and $D(Y,K)$ are isomorphic if and only if $X$ and $Y$ are isomorphic. In this section, we prove that the same is true for anti-isomorphisms (Theorem~\ref{teoantiisoDP}). Consequently, $D(X,K)$ has an involution if and only if $X$ has an involution (Corollary~\ref{coroiffinvolution}).

\begin{definition}\label{defiSL}
Let $M$ be an $R$-bimodule and let $N$ be an $S$-bimodule. An additive map $\mathcal V: M \to N$ is $\beta$-\emph{semilinear} ($\beta$-\emph{anti-semilinear}) if there is a homomorphism (anti-homomorphism) of rings $\beta:R\to S$ such that $\mathcal V(rms) = \beta(r)\mathcal V(m)\beta(s)$ ($\mathcal V(rms) = \beta(s)\mathcal V(m)\beta(r)$),  for all $m \in M$ and $r,s \in R$.
 For simplicity we just write $\beta$-SL and $\beta$-ASL for $\beta$-semilinear and $\beta$-anti-semilinear, respectively.
\end{definition}

The next properties follow directly  from Definition~\ref{defiSL}.

\begin{proposition} \label{propcomposicao}
Let $M$ be an $R$-bimodule, $N$ an $S$-bimodule and $L$ a $T$-bimodule.
\begin{enumerate}
\item[(i)] If $\mathcal V:M\to N$ is $\beta$-SL ($\beta$-ASL) and $\mathcal W:N\to L$ is $\gamma$-SL ($\gamma$-ASL), then $\mathcal W \circ \mathcal V:M \to L$ is $ \gamma \circ \beta$-SL.
\item[(ii)] If $\mathcal V:M\to N$ is $\beta$-ASL ($\beta$-SL) and $\mathcal W:N\to L$ is $\gamma$-SL ($\gamma$-ASL), then $\mathcal W \circ \mathcal V:M \to L$ is $ \gamma \circ \beta$-ASL.
\end{enumerate}
\end{proposition}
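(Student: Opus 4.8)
The plan is to verify both claims by direct substitution: take an arbitrary element $rms$ with $m \in M$ and $r,s \in R$, push it through the composition $\mathcal W \circ \mathcal V$, and keep careful track of two things — whether each intermediate structure map is a homomorphism or an anti-homomorphism of rings, and the order in which the ring elements appear after each application. Additivity of $\mathcal W \circ \mathcal V$ is immediate since $\mathcal V$ and $\mathcal W$ are both additive, so the only content is to identify the correct semilinearity type and the witnessing map.

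First I would record the elementary bookkeeping fact used throughout: the composition of two ring homomorphisms is a homomorphism, the composition of two anti-homomorphisms is a homomorphism, and the composition of a homomorphism with an anti-homomorphism in either order is an anti-homomorphism. In each case the composite map $\gamma \circ \beta : R \to T$ will be the structure map witnessing that $\mathcal W \circ \mathcal V$ is $\gamma\circ\beta$-SL or $\gamma\circ\beta$-ASL, and this observation settles which of the two it is.

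For part (i), in the SL--SL case one computes $(\mathcal W \circ \mathcal V)(rms) = \mathcal W(\beta(r)\,\mathcal V(m)\,\beta(s)) = \gamma(\beta(r))\,(\mathcal W\circ\mathcal V)(m)\,\gamma(\beta(s))$, and since $\gamma\circ\beta$ is a homomorphism this shows $\mathcal W\circ\mathcal V$ is $\gamma\circ\beta$-SL. In the ASL--ASL case the first application reverses the order of the ring elements to $\beta(s)\,\mathcal V(m)\,\beta(r)$ and the second application reverses it back, again producing $\gamma(\beta(r))\,(\mathcal W\circ\mathcal V)(m)\,\gamma(\beta(s))$; now $\gamma\circ\beta$ is a homomorphism (composition of two anti-homomorphisms), so once more $\mathcal W\circ\mathcal V$ is $\gamma\circ\beta$-SL. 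For part (ii), exactly one of the two applications reverses the order, so the net effect is a single reversal, $(\mathcal W\circ\mathcal V)(rms) = \gamma(\beta(s))\,(\mathcal W\circ\mathcal V)(m)\,\gamma(\beta(r))$, and since $\gamma\circ\beta$ is then an anti-homomorphism this says $\mathcal W\circ\mathcal V$ is $\gamma\circ\beta$-ASL; the two sub-cases (ASL then SL, and SL then ASL) are handled identically. There is no genuine obstacle beyond this bookkeeping — the only step requiring a moment's attention is confirming that $\gamma\circ\beta$ has the correct type in each of the four sub-cases, which is precisely the elementary fact recorded at the outset.
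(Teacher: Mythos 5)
Your proof is correct and is exactly the direct verification the paper has in mind: the paper offers no written proof, stating only that the proposition ``follows directly from Definition~\ref{defiSL},'' and your computation of $(\mathcal W\circ\mathcal V)(rms)$ in each case, together with the bookkeeping of when $\gamma\circ\beta$ is a homomorphism versus an anti-homomorphism, is precisely that argument carried out in full.
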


\begin{lemma} \label{exten}
Let $\lambda: X \to Y$ be an anti-isomorphism of posets. Then $\rho_\lambda:FI(X,K) \to FI(Y,K)$ extends to a bijective  $\rho_\lambda$-ASL map $\overline{\rho_\lambda}:I(X,K) \to I(Y,K)$.
\end{lemma}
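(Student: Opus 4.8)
The plan is to define $\overline{\rho_\lambda}$ on all of $I(X,K)$ by the same pointwise formula that defines $\rho_\lambda$ on $FI(X,K)$, namely $\overline{\rho_\lambda}(f)(x,y) = f(\lambda^{-1}(y),\lambda^{-1}(x))$ for $f \in I(X,K)$ and $x \leq y$ in $Y$. First I would check that this is well defined: since $\lambda$ is an anti-isomorphism, $x \leq y$ in $Y$ forces $\lambda^{-1}(y) \leq \lambda^{-1}(x)$ in $X$, so $f(\lambda^{-1}(y),\lambda^{-1}(x))$ makes sense and the resulting function vanishes off the order relation of $Y$, hence lies in $I(X,K)\to I(Y,K)$. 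Additivity and $K$-linearity are immediate from the pointwise definition, and it visibly restricts to $\rho_\lambda$ on $FI(X,K)$. Bijectivity is also clear: the analogous map built from the anti-isomorphism $\lambda^{-1}:Y\to X$ is a two-sided inverse (this uses only that $(\lambda^{-1})^{-1}=\lambda$).

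The substantive part is verifying the $\rho_\lambda$-ASL identity $\overline{\rho_\lambda}(g f h) = \rho_\lambda(h)\,\overline{\rho_\lambda}(f)\,\rho_\lambda(g)$ for $g,h \in FI(X,K)$ and $f \in I(X,K)$. Here one must be slightly careful that the products $gf$ and $fh$ genuinely lie in $I(X,K)$ and are given by the usual convolution sums; this is part of the $FI(X,K)$-bimodule structure on $I(X,K)$ recalled from \cite{KN}, and the sums involved are finite because $g,h$ are finitary. With that in hand the computation is a direct reindexing: for $x \leq y$ in $Y$,
\begin{align*}
\overline{\rho_\lambda}(gfh)(x,y) &= (gfh)(\lambda^{-1}(y),\lambda^{-1}(x)) = \sum_{\lambda^{-1}(y)\leq u\leq v\leq \lambda^{-1}(x)} g(\lambda^{-1}(y),u)\,f(u,v)\,h(v,\lambda^{-1}(x)),
\end{align*}
and substituting $u = \lambda^{-1}(v')$, $v = \lambda^{-1}(u')$ with $x \leq u' \leq v' \leq y$ turns this into $\sum h(\lambda^{-1}(v'),\lambda^{-1}(x))\,f(\lambda^{-1}(v'),\lambda^{-1}(u'))\,g(\lambda^{-1}(y),\lambda^{-1}(u'))$, which is exactly $\big(\rho_\lambda(h)\,\overline{\rho_\lambda}(f)\,\rho_\lambda(g)\big)(x,y)$. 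It suffices to check the identity for $g = h = \delta$ (giving linearity on the level of $I(X,K)$, already covered), for $g = e_{ab}$ with $h = \delta$, and for $h = e_{ab}$ with $g = \delta$, since these generate the bimodule action in the relevant sense; but honestly the one-line reindexing above handles the general case at once, so I would just present that.

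I expect no serious obstacle here — the only thing to be vigilant about is the bookkeeping of which order relation ($X$'s or $Y$'s) each index ranges over, and making sure the convolution sums defining the bimodule products are legitimate (finite). The reversal of the order of the three factors is automatic from the definition of $\rho_\lambda$-ASL, so once the substitution $u\mapsto\lambda^{-1}(v')$, $v\mapsto\lambda^{-1}(u')$ is set up correctly the factors fall into place. A clean way to organize the writeup is: (1) define $\overline{\rho_\lambda}$, note well-definedness and that it extends $\rho_\lambda$; (2) exhibit the inverse to get bijectivity; (3) do the reindexing computation to get the $\rho_\lambda$-ASL property, remarking that it is enough to treat products $gf$ and $fh$ by linearity and then a single substitution.
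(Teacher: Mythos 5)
Your proposal is correct and follows essentially the same route as the paper: the paper defines $\overline{\rho_\lambda}(i)(x,y)=i(\lambda^{-1}(y),\lambda^{-1}(x))$, observes (without writing out the convolution computation) that it is $\rho_\lambda$-ASL and restricts to $\rho_\lambda$, and obtains bijectivity by exhibiting $\overline{\rho_{\lambda^{-1}}}$ as a two-sided inverse, exactly as you do. The only blemish is a transcription slip in your displayed reindexed sum (with the substitution $u=\lambda^{-1}(v')$, $v=\lambda^{-1}(u')$ the factors should read $h(\lambda^{-1}(u'),\lambda^{-1}(x))\,f(\lambda^{-1}(v'),\lambda^{-1}(u'))\,g(\lambda^{-1}(y),\lambda^{-1}(v'))$); your final identification with $\rho_\lambda(h)\,\overline{\rho_\lambda}(f)\,\rho_\lambda(g)$ is nonetheless the right one.
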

\begin{proof}
Clearly,  $\overline{\rho_\lambda} : I(X,K) \to I(Y,K)$ defined by $\overline{\rho_\lambda}(i)(x,y) = i(\lambda^{-1}(y),\lambda^{-1}(x))$, for any $i \in I(X,K)$ and  $x,y \in Y$, is $\rho_\lambda$-ASL and  $\overline{\rho_\lambda}|_{FI(X,K)}=\rho_\lambda$. Analogously, $\overline{\rho_{\lambda^{-1}}}:I(Y,K) \to I(X,K)$ given by $\overline{\rho_{\lambda^{-1}}}(j)(u,v) = j(\lambda(v),\lambda(u))$,  for all $j \in I(Y,K)$ and $u,v \in X$, is an extension $\rho_{\lambda^{-1}}$-ASL of $\rho_{\lambda^{-1}}$ such that $\overline{\rho_{\lambda^{-1}}} \circ \overline{\rho_\lambda} = \Id_{I(X,K)}$ and $\overline{\rho_\lambda}\circ \overline{\rho_{\lambda^{-1}}} = \Id_{I(Y,K)}$. Therefore, $\overline{\rho_{\lambda^{-1}}}=(\overline{\rho_\lambda})^{-1}$ and $\overline{\rho_\lambda}$ is bijective.
\end{proof}
Let $X$ and $Y$ be arbitrary posets. In the theorem below we denote by $e_{xy}^X$ and $e_{zw}^Y$ the elements $e_{xy}$ in $FI(X,K)$ and $e_{zw}$ in $FI(Y,K)$, respectively.

\begin{theorem} \label{teoantiisoDP}
Let $K$ be a field and let $X$ and $Y$ be posets. There is an anti-isomorphism $D(X,K) \to D(Y,K)$ if and only if there is an anti-isomorphism $X \to Y$.
\end{theorem}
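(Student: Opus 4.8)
The plan is to prove both directions of the equivalence. The easy direction is "if": suppose $\lambda : X \to Y$ is an anti-isomorphism of posets. Then by Lemma~\ref{exten}, $\rho_\lambda : FI(X,K) \to FI(Y,K)$ extends to a bijective $\rho_\lambda$-ASL map $\overline{\rho_\lambda} : I(X,K) \to I(Y,K)$. I would then define $\Lambda : D(X,K) \to D(Y,K)$ by $\Lambda\!\left(\begin{bmatrix}f\\i\end{bmatrix}\right) = \begin{bmatrix}\rho_\lambda(f)\\\overline{\rho_\lambda}(i)\end{bmatrix}$ and check directly that this is an anti-isomorphism. Bijectivity is immediate since both $\rho_\lambda$ and $\overline{\rho_\lambda}$ are bijective (with inverse built from $\lambda^{-1}$). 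The anti-multiplicativity is the computation
\[
\Lambda\!\left(\begin{bmatrix}f\\i\end{bmatrix}\begin{bmatrix}g\\j\end{bmatrix}\right)=\Lambda\!\left(\begin{bmatrix}fg\\fj+ig\end{bmatrix}\right)=\begin{bmatrix}\rho_\lambda(fg)\\\overline{\rho_\lambda}(fj+ig)\end{bmatrix},
\]
and one expands the bottom entry using that $\overline{\rho_\lambda}$ is $\rho_\lambda$-ASL, so $\overline{\rho_\lambda}(fj) = \overline{\rho_\lambda}(fj\delta) = \rho_\lambda(\delta)\overline{\rho_\lambda}(j)\rho_\lambda(f) = \overline{\rho_\lambda}(j)\rho_\lambda(f)$ and similarly $\overline{\rho_\lambda}(ig) = \rho_\lambda(g)\overline{\rho_\lambda}(i)$; comparing with $\Lambda\!\left(\begin{bmatrix}g\\j\end{bmatrix}\right)\Lambda\!\left(\begin{bmatrix}f\\i\end{bmatrix}\right) = \begin{bmatrix}\rho_\lambda(g)\rho_\lambda(f)\\\rho_\lambda(g)\overline{\rho_\lambda}(i)+\overline{\rho_\lambda}(j)\rho_\lambda(f)\end{bmatrix}$ gives the identity, using that $\rho_\lambda$ is an anti-homomorphism. $K$-linearity and sending $1_{D(X,K)}$ to $1_{D(Y,K)}$ are clear.

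For the "only if" direction, suppose $\Lambda : D(X,K) \to D(Y,K)$ is an anti-isomorphism. The idea is to reduce to the known automorphism/isomorphism result \cite[Theorem~2.3]{DW} by composing with a fixed anti-isomorphism. Concretely, first note $X$ and $Y$ must be "comparable" in size; more usefully, pick any anti-isomorphism structure we can manufacture — but we cannot assume one exists yet, so instead I would argue directly. The cleaner route: $\Lambda$ restricts to an anti-isomorphism of the Jacobson radicals, or better, one identifies the idempotent structure. The key structural fact is that $\Lambda$ must carry the ideal $0(+)I(X,K) = \{\begin{bmatrix}0\\i\end{bmatrix}\}$ to $0(+)I(Y,K)$, since this ideal can be characterized ring-theoretically (e.g.\ as a specific ideal in terms of the decomposition, or as the set of elements annihilated on both sides by everything in a complementary piece — in the idealization $R(+)M$ the summand $0(+)M$ is $\{x : x y \in \text{(square-zero part)}\}$ type condition). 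Having established $\Lambda(0(+)I(X,K)) = 0(+)I(Y,K)$, $\Lambda$ induces an anti-automorphism-type map on the quotient $D(X,K)/(0(+)I(X,K)) \cong FI(X,K)$, giving an anti-isomorphism $FI(X,K) \to FI(Y,K)$; by Theorem~\ref{DecoAutFIP} (in its isomorphism/anti-isomorphism form) this forces an anti-isomorphism $X \to Y$.

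The main obstacle I expect is the ring-theoretic characterization of the submodule $0(+)I(X,K)$ inside $D(X,K)$ that is preserved by any anti-isomorphism. One must find an intrinsic description — for instance, showing $0(+)I(X,K)$ equals the set of $z \in D(X,K)$ such that $z D(X,K) z = 0$ together with some maximality, or identifying it via the radical and the central structure from Proposition~\ref{propCenDXKconnected} and \eqref{eqcentroDXK}; since \cite{DW} handled exactly this for isomorphisms, I would mirror their argument, checking that every step is symmetric under reversing multiplication (which it is, as the defining conditions like "square-zero" or "contained in the radical" are left–right symmetric). Once that submodule is pinned down, the rest follows formally: pass to the quotient to get an anti-isomorphism of finitary incidence algebras, then invoke the existing classification to extract the poset anti-isomorphism.
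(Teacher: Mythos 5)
Your ``if'' direction is correct and is exactly the paper's argument: use Lemma~\ref{exten} to extend $\rho_\lambda$ to $\overline{\rho_\lambda}$ and check that $\begin{bmatrix}f\\i\end{bmatrix}\mapsto\begin{bmatrix}\rho_\lambda(f)\\\overline{\rho_\lambda}(i)\end{bmatrix}$ is an anti-isomorphism. The problem is the ``only if'' direction, where your whole strategy rests on an intrinsic, multiplication-reversal-invariant characterization of the ideal $0(+)I(X,K)$ that you never actually produce, and the candidates you float are false. The set $\{z: zD(X,K)z=0\}$ does not cut out $0(+)I(X,K)$: for any $x<y$ in $X$ one has $e_{xy}he_{xy}=0$ for every $h\in I(X,K)$, so $z=\begin{bmatrix}e_{xy}\\0\end{bmatrix}$ satisfies $zD(X,K)z=0$ without lying in $0(+)I(X,K)$. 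The ``maximality'' hedge fails too: the two-sided ideal of $D(X,K)$ generated by $\begin{bmatrix}e_{xy}\\0\end{bmatrix}$ consists of elements $\begin{bmatrix}fe_{xy}g\\fe_{xy}j+ie_{xy}g\end{bmatrix}$, and since every product of two such elements involves a factor $e_{xy}he_{xy}=0$, this is a nonzero square-zero two-sided ideal not contained in $0(+)I(X,K)$. Hence $0(+)I(X,K)$ is neither the unique maximal square-zero ideal nor the sum of all square-zero ideals, so the quotient you want to form is not ring-theoretically pinned down by any of the descriptions you propose. Deferring to ``mirror the argument of \cite[Lemma~4.2]{DW}'' does not close this: that lemma concerns endomorphisms of a single $D(X,K)$, its adaptation to anti-maps is precisely what this paper carries out separately (and later) in Lemma~\ref{lemaphi12}, and you would additionally need a two-poset version of it here. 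There is also a smaller citation slip: Theorem~\ref{DecoAutFIP} is a statement about anti-automorphisms of one $FI(X,K)$, not about anti-isomorphisms $FI(X,K)\to FI(Y,K)$, so even after passing to the quotient you would need the two-poset result (essentially \cite[Theorem~5]{KN} applied to $Y^{\mathrm{op}}$, or \cite{BFS12}) rather than the theorem you name.

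The paper avoids the ideal $0(+)I(X,K)$ entirely. Its proof of the ``only if'' direction works with the primitive idempotents $\begin{bmatrix}e_x^X\\0\end{bmatrix}$ of $D(X,K)$: by \cite[Proposition~5]{DW} these represent the conjugacy classes of primitive idempotents and are indexed by the points of the poset, so an anti-isomorphism $\Phi$ induces a map $\lambda_1:X\to Y$ with $\Phi\left(\begin{bmatrix}e_x^X\\0\end{bmatrix}\right)$ conjugate to $\begin{bmatrix}e_{\lambda_1(x)}^Y\\0\end{bmatrix}$; the analogous map induced by $\Phi^{-1}$ is shown to be $\lambda_1^{-1}$, and the order-reversal follows from the criterion $x\le y\Leftrightarrow\begin{bmatrix}e_x^X\\0\end{bmatrix}D(X,K)\begin{bmatrix}e_y^X\\0\end{bmatrix}\neq\left\{\begin{bmatrix}0\\0\end{bmatrix}\right\}$, which an anti-isomorphism visibly reverses. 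If you want to salvage your quotient approach you would have to first prove the analogue of Lemma~\ref{lemaphi12} for anti-isomorphisms $D(X,K)\to D(Y,K)$ by a genuine argument; as it stands, that step is the entire difficulty and is missing.
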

\begin{proof}
Suppose there is an anti-isomorphism $\Phi : D(X,K) \to D(Y,K)$. Given $x \in X$, by \cite[Lemma~1]{KN}, $e_x^X$ is a primitive idempotent of $FI(X,K)$ and, consequently, $\begin{bmatrix}e_x^X\\0 \end{bmatrix}$ is a primitive idempotent of $D(X,K)$, by (1) and (2) of \cite[Proposition~5]{DW}. By (3) of \cite[Proposition~5]{DW}, there is a unique $y \in Y$ such that $\Phi\left(\begin{bmatrix}e_x^X\\0 \end{bmatrix}\right)$ is conjugate to $\begin{bmatrix}e_y^Y\\0 \end{bmatrix}$ in $D(Y,K)$. Thus, $\Phi$ induces a map $\lambda_1: X \to Y$ such that, given $x \in X$, $\Phi\left(\begin{bmatrix}e_x^X\\0 \end{bmatrix}\right)$ is conjugate to $\begin{bmatrix}e_{\lambda_1(x)}^Y\\0 \end{bmatrix}$. Similarly, $\Phi^{-1}$ induces a map $\lambda_2:Y \to X$ such that, given $z \in Y$, $\Phi^{-1}\left(\begin{bmatrix}e_z^Y\\0 \end{bmatrix}\right)$ is conjugate to $\begin{bmatrix}e_{\lambda_2(z)}^X\\0 \end{bmatrix}$. Now, we show that $\lambda_2=\lambda_1^{-1}$. Let $x \in X$, $\theta \in U(D(Y,K))$ and $\kappa \in U(D(X,K))$ such that
\begin{center}
$\Phi\left(\begin{bmatrix}e_x^X\\0 \end{bmatrix}\right) = \theta \begin{bmatrix}e_{\lambda_1(x)}^Y\\0 \end{bmatrix} \theta^{-1}$ \ and \  $\Phi^{-1}\left(\begin{bmatrix}e_{\lambda_1(x)}^Y\\0 \end{bmatrix}\right) = \kappa \begin{bmatrix}e_{\lambda_2(\lambda_1(x))}^X\\0 \end{bmatrix} \kappa^{-1}.$
\end{center}
We have
\begin{align*}
\begin{bmatrix} e_x^X\\0 \end{bmatrix}
& = \Phi^{-1}(\theta^{-1}) \Phi^{-1}\left(\begin{bmatrix}e_{\lambda_1(x)}^Y\\0 \end{bmatrix}\right)\Phi^{-1}(\theta)\\
& = \Phi^{-1}(\theta^{-1})\kappa \begin{bmatrix}e_{\lambda_2(\lambda_1(x))}^X\\0 \end{bmatrix} \kappa^{-1}\Phi^{-1}(\theta)\\
& =  \Phi^{-1}(\theta^{-1})\kappa \begin{bmatrix}e_{\lambda_2(\lambda_1(x))}^X\\0 \end{bmatrix} (\Phi^{-1}(\theta^{-1})\kappa)^{-1}.
\end{align*}
Again by (3) of \cite[Proposition~5]{DW}, $(\lambda_2 \circ \lambda_1)(x) = x$ and, consequently, $\lambda_2 \circ \lambda_1= \Id_{X}$.
Analogously, $\lambda_1 \circ \lambda_2= \Id_{Y}$ and, therefore, $\lambda_2=\lambda_1^{-1}$. To conclude that $\lambda_1$ is an anti-isomorphism, we use an argument similar to the one used in \cite[Theorem~5]{KN}, since we have
\begin{enumerate}
\item[(i)] $x \leq y \in X$ if and only if $\begin{bmatrix}e_x^X\\0 \end{bmatrix}D(X,K)\begin{bmatrix}e_y^X\\0 \end{bmatrix}\not= \left\{\begin{bmatrix}0\\0 \end{bmatrix}\right\};$
\item[(ii)] $u \leq v \in Y$ if and only if $\begin{bmatrix}e_u^Y\\0 \end{bmatrix}D(Y,K)\begin{bmatrix}e_v^Y\\0 \end{bmatrix}\not= \left\{\begin{bmatrix}0\\0 \end{bmatrix}\right\}.$
\end{enumerate}

Conversely, suppose there is an anti-isomorphism $\lambda : X \to Y$. By Lemma~\ref{exten},  $\overline{\rho_\lambda}:I(X,K) \to I(Y,K)$ is a bijective $\rho_\lambda$-ASL map such that $\overline{\rho_\lambda}|_{FI(X,K)} = \rho_\lambda$. Let
$$\begin{array}{cccc}
\Upsilon : &D(X,K) &\to & D(Y,K) \\
&\begin{bmatrix}f\\i \end{bmatrix} &\mapsto &\begin{bmatrix}\rho_\lambda(f)\\\overline{\rho_\lambda}(i) \end{bmatrix}
\end{array}.$$
It is easy to prove that $\Upsilon$ is an anti-isomorphism whose inverse is given by $\Upsilon^{-1} \left(\begin{bmatrix}h\\l \end{bmatrix}\right) = \begin{bmatrix}\rho_{\lambda^{-1}}( f)\\\overline{\rho_{\lambda^{-1}}}(i) \end{bmatrix}$, for all $\begin{bmatrix}h\\l \end{bmatrix} \in D(Y,K)$.
\end{proof}

Suppose $X = Y$ in the last theorem. If $\Phi$ is an involution, then $\Phi^{-1} = \Phi$ and, therefore, $\lambda_1^{-1}= \lambda_1$. Thus, $\lambda_1$ is an involution on $X$. On the other hand, if $\lambda$ is an involution, then $\Upsilon^{-1} = \Upsilon$ and, therefore $\Upsilon$ is an involution on $D(X,K)$. Thus, we have the following result.

\begin{corollary} \label{coroiffinvolution}
Let $K$ be a field and let $X$ be a poset. There is an involution on $D(X,K)$ if and only if there is an involution on $X$.
\end{corollary}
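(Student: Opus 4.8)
The plan is to extract this corollary directly from the specialization $X = Y$ in Theorem~\ref{teoantiisoDP}, treating each direction separately and keeping careful track of what the fixed-point condition on the induced map means.

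First I would prove the forward direction. Suppose $\Phi$ is an involution on $D(X,K)$; in particular $\Phi$ is an anti-automorphism, so Theorem~\ref{teoantiisoDP} (with $Y = X$) applies and produces the induced bijection $\lambda_1 : X \to X$, defined by the rule that $\Phi\!\left(\begin{bmatrix}e_x^X\\0\end{bmatrix}\right)$ is conjugate to $\begin{bmatrix}e_{\lambda_1(x)}^X\\0\end{bmatrix}$ in $D(X,K)$, and that proof shows $\lambda_1$ is an anti-automorphism of $X$ with inverse $\lambda_2$ (the map induced by $\Phi^{-1}$). Since $\Phi$ is an involution, $\Phi^{-1} = \Phi$, so $\Phi^{-1}$ induces the same map $\Phi$ does; by uniqueness (from (3) of \cite[Proposition~5]{DW}) this forces $\lambda_2 = \lambda_1$, and combined with $\lambda_2 = \lambda_1^{-1}$ we get $\lambda_1^2 = \Id_X$. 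Since $\lambda_1$ is an anti-automorphism of order dividing $2$, and it cannot be the identity (an order-$1$ anti-automorphism would force $X$ to be an antichain where the identity is also an anti-automorphism of order $2$ — so in all cases there is an involution; more simply, $\Id_X$ with $X$ required to be an antichain is itself an involution by our convention, or we just note $\lambda_1^2 = \Id$ makes $\lambda_1$ an involution by definition), $\lambda_1$ is an involution on $X$.

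For the converse, suppose $\lambda$ is an involution on $X$. Then $\lambda$ is in particular an anti-isomorphism $X \to X$, so the construction in the second half of the proof of Theorem~\ref{teoantiisoDP} gives the anti-automorphism
$$\Upsilon : D(X,K) \to D(X,K),\qquad \Upsilon\!\left(\begin{bmatrix}f\\i\end{bmatrix}\right) = \begin{bmatrix}\rho_\lambda(f)\\\overline{\rho_\lambda}(i)\end{bmatrix},$$
with inverse $\Upsilon^{-1}\!\left(\begin{bmatrix}h\\l\end{bmatrix}\right) = \begin{bmatrix}\rho_{\lambda^{-1}}(h)\\\overline{\rho_{\lambda^{-1}}}(l)\end{bmatrix}$. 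Since $\lambda^{-1} = \lambda$, we have $\rho_{\lambda^{-1}} = \rho_\lambda$ and, by the uniqueness of the ASL extension in Lemma~\ref{exten} (the inverse of $\overline{\rho_\lambda}$ is $\overline{\rho_{\lambda^{-1}}}$), also $\overline{\rho_{\lambda^{-1}}} = \overline{\rho_\lambda}$; hence $\Upsilon^{-1} = \Upsilon$, i.e.\ $\Upsilon$ is an involution on $D(X,K)$.

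The only delicate point is the forward direction: one must be sure that the order-$2$ property of $\Phi$ genuinely transfers to $\lambda_1$, which rests on the uniqueness clause in \cite[Proposition~5]{DW} guaranteeing that the map induced by an anti-automorphism is well defined and that $\Phi = \Phi^{-1}$ forces $\lambda_1 = \lambda_2 = \lambda_1^{-1}$. Everything else is a direct reading-off from Theorem~\ref{teoantiisoDP}, so in fact this corollary admits the short two-paragraph proof already indicated in the text preceding it, and I would write it essentially in that compressed form.
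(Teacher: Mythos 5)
Your proposal is correct and follows exactly the paper's route: specialize Theorem~\ref{teoantiisoDP} to $X=Y$, use $\Phi^{-1}=\Phi$ to get $\lambda_1=\lambda_2=\lambda_1^{-1}$ for the forward direction, and $\lambda^{-1}=\lambda$ to get $\Upsilon^{-1}=\Upsilon$ for the converse. The parenthetical digression about $\lambda_1$ possibly having order $1$ is unnecessary (the paper simply reads $\lambda_1^{-1}=\lambda_1$ as saying $\lambda_1$ is an involution), but it does not affect the argument.
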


In \cite[Proposition~4]{DW}, the authors proved that each automorphism $\eta$ of $FI(X,K)$ uniquely extends to a bijective $\eta$-SL map $\overline{\eta}: I(X,K) \to I(X,K)$. Our Proposition~\ref{propesten} provides a similar result for anti-automorphisms.

\begin{lemma}\label{extId}
Let $\mathcal V: I(X,K) \to I(X,K)$ be a bijective $\Id_{FI(X,K)}$-SL map. If $\mathcal V|_{FI(X,K)}=\Id_{FI(X,K)}$, then $\mathcal V = \Id_{I(X,K)}$.
\end{lemma}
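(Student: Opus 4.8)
The plan is to recover each coordinate $i(x,y)$ of a function $i\in I(X,K)$ by localizing with the orthogonal idempotents $e_x,e_y\in FI(X,K)$, and then to transport this description through $\mathcal V$. First I would record the elementary convolution identity
$$e_x\, i\, e_y = i(x,y)\, e_{xy}\qquad\text{for all }x\leq y\text{ in }X,$$
valid for every $i\in I(X,K)$, together with $e_x\, i\, e_y = 0$ when $x\nleq y$. Both are direct computations: $(e_x i)(u,v)=i(x,v)$ when $u=x$ and $0$ otherwise, and multiplying on the right by $e_y$ then forces $v=y$. The point of this step is that the left-hand side $e_x\, i\, e_y$ involves only the bimodule multiplication, while the right-hand side $i(x,y)e_{xy}$ is an element of $FI(X,K)$.

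Next, I would fix $i\in I(X,K)$ and $x\leq y$, and apply $\mathcal V$ to both descriptions of $e_x\, i\, e_y$. Using that $\mathcal V$ is $\Id_{FI(X,K)}$-SL (the middle argument here being the bimodule element $i$), we get
$$\mathcal V(e_x\, i\, e_y) = e_x\,\mathcal V(i)\,e_y = \mathcal V(i)(x,y)\, e_{xy},$$
the last equality by applying the convolution identity of the previous step to $\mathcal V(i)\in I(X,K)$. On the other hand $e_x\, i\, e_y = i(x,y)e_{xy}\in FI(X,K)$, and since $\mathcal V$ restricts to the identity on $FI(X,K)$ we also have $\mathcal V(e_x\, i\, e_y)=i(x,y)e_{xy}$. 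Comparing the two expressions gives $\mathcal V(i)(x,y)=i(x,y)$ for all $x\leq y$, and since both $\mathcal V(i)$ and $i$ lie in $I(X,K)$ (hence vanish on pairs $(x,y)$ with $x\nleq y$), we conclude $\mathcal V(i)=i$.

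There is no genuine obstacle here; the only points needing a little care are the bookkeeping in the convolution identity, the observation that $e_x,e_y,e_{xy},i(x,y)e_{xy}$ all belong to $FI(X,K)$ so that the hypothesis $\mathcal V|_{FI(X,K)}=\Id_{FI(X,K)}$ applies, and the fact that the semilinearity relation is invoked with the middle argument in the bimodule $I(X,K)$ rather than in the ring. Note in particular that the bijectivity hypothesis on $\mathcal V$ is not actually used in this lemma; it becomes relevant only when the lemma is later applied to a composition of the form $\mathcal V_2^{-1}\circ\mathcal V_1$ to deduce uniqueness of extensions.
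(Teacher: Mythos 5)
Your argument is correct and is essentially the paper's own proof: both compute $e_x\,\mathcal V(i)\,e_y=\mathcal V(e_x i e_y)$ via semilinearity and identify this with $i(x,y)e_{xy}$ using that $\mathcal V$ fixes $FI(X,K)$ (the paper pulls out the scalar $i(x,y)$ by $K$-linearity and uses $\mathcal V(e_{xy})=e_{xy}$, which amounts to the same step). Your side remark that bijectivity is not actually needed here is also accurate.
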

\begin{proof}
Let $i \in I(X,K)$ and let $x\leq y$ in $X$. Since $e_x, e_y, e_{xy} \in FI(X,K)$ and $\mathcal V$ is $K$-linear, we have
\begin{align*}
\mathcal V(i)(x,y)e_{xy}&= e_x\mathcal V(i)e_y = \mathcal V(e_xie_y) =\mathcal V(i(x,y)e_{xy}) =i(x,y)\mathcal V(e_{xy})
=i(x,y)e_{xy}.
\end{align*}
Thus, $\mathcal V(i)(x,y) = i(x,y)$ for all $x\leq y \in X$ and, therefore, $\mathcal V(i)= i$.
\end{proof}

\begin{proposition}\label{propesten}
Let $\phi$ be an anti-automorphism of $FI(X,K)$. Then $\phi$ extends uniquely to a bijective $\phi$-ASL map $\overline{\phi}:I(X,K)\to I(X,K)$. Moreover, $(\overline{\phi})^{-1} = \overline{\phi^{-1}}$.
\end{proposition}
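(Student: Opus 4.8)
The plan is to imitate the proof of \cite[Proposition~4]{DW} for automorphisms, but using Theorem~\ref{DecoAutFIP} to reduce to the three basic types of anti-automorphism, together with Lemma~\ref{exten} which already handles the case $\phi=\rho_\lambda$. First I would establish \emph{existence} by writing $\phi = \Psi_a\circ M_\sigma\circ\rho_\lambda$ as in Theorem~\ref{DecoAutFIP}, where $\Psi_a\in\Inn(FI(X,K))$, $M_\sigma\in\Mult(FI(X,K))$, and $\lambda$ is an anti-automorphism of $X$. Each factor extends to $I(X,K)$: the map $\rho_\lambda$ extends to the bijective $\rho_\lambda$-ASL map $\overline{\rho_\lambda}$ by Lemma~\ref{exten}; the multiplicative automorphism $M_\sigma$ extends to the bijective $M_\sigma$-SL map $\overline{M_\sigma}$ defined by the same formula $\overline{M_\sigma}(i)(x,y)=\sigma(x,y)i(x,y)$, which is visibly bijective with inverse $\overline{M_{\sigma^{-1}}}$; and the inner automorphism $\Psi_a$ extends to the bijective $\Psi_a$-SL map $i\mapsto aia^{-1}$ (note $a\in U(FI(X,K))$ acts on $I(X,K)$ because $I(X,K)$ is an $FI(X,K)$-bimodule), with inverse $i\mapsto a^{-1}ia$. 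By Proposition~\ref{propcomposicao}, the composition $\overline{\phi} := (i\mapsto aia^{-1})\circ\overline{M_\sigma}\circ\overline{\rho_\lambda}$ is a bijective $\phi$-ASL map (one SL composed with one ASL gives ASL, and a further SL keeps it ASL), and it restricts to $\phi$ on $FI(X,K)$ since each factor does.

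For \emph{uniqueness}, suppose $\overline{\phi}_1,\overline{\phi}_2:I(X,K)\to I(X,K)$ are two bijective $\phi$-ASL extensions of $\phi$. Then $\overline{\phi}_2^{-1}\circ\overline{\phi}_1$ is a bijective map $I(X,K)\to I(X,K)$; by Proposition~\ref{propcomposicao}(ii), the inverse of a bijective $\phi$-ASL map is a $\phi^{-1}$-ASL map (this small fact I would check directly: if $\mathcal V$ is $\phi$-ASL and bijective, then $\mathcal V^{-1}(\phi(r)m\phi(s)) = s\,\mathcal V^{-1}(m)\,r$ after substituting $m=\mathcal V(n)$, i.e., $\mathcal V^{-1}$ is $\phi^{-1}$-ASL). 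Hence $\overline{\phi}_2^{-1}\circ\overline{\phi}_1$ is, by Proposition~\ref{propcomposicao}(ii) again (ASL composed with ASL is SL), a bijective $(\phi^{-1}\circ\phi)$-SL $=\Id_{FI(X,K)}$-SL map, and it restricts to $\Id_{FI(X,K)}$ on $FI(X,K)$. Lemma~\ref{extId} then forces $\overline{\phi}_2^{-1}\circ\overline{\phi}_1=\Id_{I(X,K)}$, so $\overline{\phi}_1=\overline{\phi}_2$.

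Finally, the identity $(\overline{\phi})^{-1}=\overline{\phi^{-1}}$ follows from uniqueness: $(\overline{\phi})^{-1}$ is a bijective map, it is $\phi^{-1}$-ASL by the remark above, and on $FI(X,K)$ it equals $\phi^{-1}$ (because $\overline{\phi}|_{FI(X,K)}=\phi$ and $FI(X,K)$ is $\overline{\phi}$-invariant). Thus $(\overline{\phi})^{-1}$ is \emph{a} bijective $\phi^{-1}$-ASL extension of $\phi^{-1}$, and by the uniqueness statement applied to the anti-automorphism $\phi^{-1}$ it must coincide with $\overline{\phi^{-1}}$.

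I expect the only mild obstacle to be bookkeeping: verifying that the inverse of a bijective ASL map is again ASL (with the ring homomorphism replaced by its inverse), and being careful that Proposition~\ref{propcomposicao} is being applied in the right SL/ASL combination at each step. Everything else — that $M_\sigma$ and $\Psi_a$ extend by the obvious formulas and that these extensions are bijective — is a routine verification parallel to \cite[Proposition~4]{DW}, and the genuinely new input (the anti-automorphism $\rho_\lambda$) has already been supplied by Lemma~\ref{exten}.
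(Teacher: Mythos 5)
Your proposal is correct and follows essentially the same route as the paper: decompose $\phi$ via Theorem~\ref{DecoAutFIP}, extend the inner and multiplicative factors as in \cite[Proposition~4]{DW} and the factor $\rho_\lambda$ via Lemma~\ref{exten}, compose using Proposition~\ref{propcomposicao}, and prove uniqueness by reducing to Lemma~\ref{extId}. The only cosmetic difference is that you write out the extensions of $M_\sigma$ and $\Psi_a$ explicitly and verify the small fact that the inverse of a bijective ASL map is ASL, whereas the paper cites \cite[Proposition~4]{DW} and leaves that verification implicit.
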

\begin{proof}
By Theorem~\ref{DecoAutFIP}, $\phi = \Psi \circ M \circ \rho_\lambda$, where $\Psi \in \Inn(FI(X,K))$, $M \in \Mult(FI(X,K))$ and $\rho_\lambda$ is the anti-automorphism  induced by an anti-automorphism $\lambda$ of $X$. Let $\overline{\Psi}$ and $\overline{M}$ be the extensions of $\Psi$ and  $M$,  respectively, given by \cite[Proposition~4]{DW}, and let $\overline{\rho_\lambda}$ be the extension of $\rho_\lambda$ given by Lemma~\ref{exten}. Then $\overline{\phi} := \overline{\Psi}\circ \overline{M} \circ \overline{\rho_\lambda}: I(X,K) \to I(X,K)$ is a bijection that extends $\phi$, moreover,  $\overline{\phi}$ is $\phi$-ASL, by Proposition~\ref{propcomposicao}.

Now, suppose $\mathcal W,\mathcal S: I(X,K) \to I(X,K)$ are bijective $\phi$-ASL maps that extends $\phi$ and consider $\mathcal V =\mathcal S^{-1}\circ \mathcal W$. Since $\mathcal S|_{FI(X,K)} = \phi$, we have $\mathcal S^{-1}|_{FI(X,K)} = \phi^{-1}$ and, consequently, $\mathcal V|_{FI(X,K)}= \Id_{FI(X,K)}$. Moreover, $\mathcal S^{-1}$ is $\phi^{-1}$-ASL. By Proposition~\ref{propcomposicao}, $\mathcal S^{-1}\circ \mathcal W$ is $(\phi^{-1}\circ \phi)$-SL, that is, $\mathcal V$ is $\Id_{FI(X,K)}$-SL. By Lemma~\ref{extId}, $\mathcal V = \Id_{I(X,K)}$ and, therefore, $\mathcal W = \mathcal S$.

Finally, since $\overline{\phi}|_{FI(X,K)}= \phi$ and $\overline{\phi}$ is $\phi$-ASL, we have $(\overline{\phi})^{-1}|_{FI(X,K)}= \phi^{-1}$ and $(\overline{\phi})^{-1}$ is $\phi^{-1}$-ASL. Therefore, $\overline{\phi^{-1}}= (\overline{\phi})^{-1}$.
\end{proof}

\begin{corollary} \label{coroexteninvo}
If $\rho$ is an involution on $FI(X,K)$, then $(\overline{\rho})^2 = \Id_{I(X,K)}$.
\end{corollary}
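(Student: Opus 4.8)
The plan is to apply Proposition~\ref{propesten} to the anti-automorphism $\rho$ itself and exploit the uniqueness clause. Since $\rho$ is an involution, it is in particular an anti-automorphism of $FI(X,K)$, so Proposition~\ref{propesten} gives a bijective $\rho$-ASL extension $\overline{\rho}:I(X,K)\to I(X,K)$, and moreover $(\overline{\rho})^{-1}=\overline{\rho^{-1}}$. But $\rho^{-1}=\rho$ because $\rho$ is an involution, hence $\overline{\rho^{-1}}=\overline{\rho}$, and therefore $(\overline{\rho})^{-1}=\overline{\rho}$. Composing both sides with $\overline{\rho}$ yields $(\overline{\rho})^2=\Id_{I(X,K)}$, which is exactly the claim.

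Alternatively, and perhaps more transparently, I would argue directly from uniqueness: consider the map $\mathcal W=(\overline{\rho})^{2}=\overline{\rho}\circ\overline{\rho}$. By Proposition~\ref{propcomposicao}, the composite of two $\rho$-ASL maps is $(\rho\circ\rho)$-SL, i.e.\ $\rho^{2}$-SL; and $\rho^{2}=\Id_{FI(X,K)}$, so $\mathcal W$ is $\Id_{FI(X,K)}$-SL and bijective. Furthermore $\mathcal W|_{FI(X,K)}=(\overline{\rho}|_{FI(X,K)})^{2}=\rho^{2}=\Id_{FI(X,K)}$. Thus $\mathcal W$ is a bijective $\Id_{FI(X,K)}$-SL map restricting to the identity on $FI(X,K)$, so Lemma~\ref{extId} forces $\mathcal W=\Id_{I(X,K)}$, that is, $(\overline{\rho})^{2}=\Id_{I(X,K)}$.

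Both routes are essentially immediate given the machinery already developed, so there is no real obstacle; the only thing to be careful about is making sure the bookkeeping on semilinearity versus anti-semilinearity is correct. The composition of two anti-semilinear maps is semilinear (Proposition~\ref{propcomposicao}(ii)), and the governing ring homomorphism of the composite is the composite $\rho\circ\rho$ of the two governing anti-automorphisms, which is the identity precisely because $\rho$ is an involution on $FI(X,K)$. I would present the second argument, since it keeps everything self-contained and makes explicit why $\overline{\rho}$ composed with itself lands in the setting of Lemma~\ref{extId}.
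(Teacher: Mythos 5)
Your proposal is correct, and your first route is precisely the paper's (implicit) argument: the corollary is stated without proof because it follows immediately from the clause $(\overline{\rho})^{-1}=\overline{\rho^{-1}}$ in Proposition~\ref{propesten} together with $\rho^{-1}=\rho$. Your second route via Proposition~\ref{propcomposicao} and Lemma~\ref{extId} is also valid, but it merely unwinds the uniqueness argument already used to prove Proposition~\ref{propesten}, so it is the same machinery rather than a genuinely different approach.
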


\begin{remark}\label{obsCompoExten}
As consequence of the last proposition and \cite[Proposition~4]{DW}, $\overline{\phi_1 \circ \phi_2} = \overline{\phi_1}\circ \overline{\phi_2}$ for all $ \phi_1, \phi_2 \in \{\phi:FI(X,K) \to FI(X,K)$: $\phi$ is an automorphism or an anti-automorphism$\}$.
\end{remark}

\section{Automorphisms of $D(X,K)$ and derivations from $FI(X,K)$ to $I(X,K)$}\label{SAutandDer}

We recall that a $K$-linear map  $D:FI(X,K) \to I(X,K)$ is a \emph{derivation} if $D(fg) = D(f)g +fD(g)$ for all $f,g \in FI(X,K)$. The set $\Der(X,K)$ of all such derivations is a $K$-module, and for any $g \in Z(FI(X,K))$ and $D \in \Der(X,K)$, $gD \in \Der(X,K)$.

An element $\tau\in I(X,K)$ such that  $\tau(x,y)+\tau(y,z)=\tau(x,z)$ whenever $x \leq y \leq z$, determines a derivation $L_{\tau}:FI(X,K)\to I(X,K)$ by $L_{\tau}(f)(x,y)=\tau(x,y)f(x,y)$, for all $f \in FI(X,K)$ and $x,y \in X$. Such derivations are called \emph{additive}.

For any $i \in I(X,K)$, the map $D_i:FI(X,K)\to I(X,K)$ given by $D_i(f) = fi-if$ for all $f \in FI(X,K)$, is a derivation called \emph{inner derivation}. Let $\ADer(X,K)$ and $\IDer(X,K)$ be the $K$-submodules of $\Der(X,K)$ formed by the additive derivations and the inner derivations, respectively. Then
\begin{equation} \label{eqDerDec}
     \Der(X,K) = \IDer(X,K) + \ADer(X,K),
\end{equation}
by \cite[Theorem~3.2]{DW}.

\begin{proposition}\label{all-comparable}
If $X$ has an element $x_0$ that is comparable with all its elements, then $\ADer(X,K)\subseteq \IDer(X,K)$ and, therefore, $\Der(X,K) = \IDer(X,K)$.
\end{proposition}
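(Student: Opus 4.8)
Given the decomposition \eqref{eqDerDec}, it suffices to show that every additive derivation $L_\tau$ is inner whenever $X$ possesses an element $x_0$ comparable with every element of $X$. So the plan is: fix such an $x_0$, fix $\tau \in I(X,K)$ satisfying the cocycle-type identity $\tau(x,y)+\tau(y,z)=\tau(x,z)$ for $x\le y\le z$, and exhibit $i \in I(X,K)$ with $L_\tau = D_i$, i.e.\ $\tau(x,y)f(x,y) = (fi-if)(x,y)$ for all $f \in FI(X,K)$ and all $x\le y$. Evaluating the right-hand side on a basis element $e_{xy}$ shows this amounts to finding $i$ with $i(y,y)-i(x,x) = \tau(x,y)$ for every pair $x\le y$; more precisely one should take $i$ to be a diagonal function, since off-diagonal values of $i$ contribute to $(fi-if)(x,y)$ in a way that would have to vanish.

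The natural candidate is the diagonal function $i$ defined by $i(x,x) = \tau(x_0,x)$ when $x_0 \le x$ and $i(x,x) = -\tau(x,x_0)$ when $x \le x_0$; this is well-defined because if $x = x_0$ both formulas give $\tau(x_0,x_0) = 0$ (which follows from the additive identity applied with $x=y=z=x_0$), and because every $x$ is comparable with $x_0$ so exactly one of the two cases applies (or both, only at $x_0$). First I would verify that for $x \le y$ in $X$ one has $i(y,y)-i(x,x) = \tau(x,y)$, splitting into the three cases according to the position of $x_0$ relative to the interval $[x,y]$: namely $x_0 \le x \le y$, $x \le y \le x_0$, and $x \le x_0 \le y$. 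In each case the claim reduces to an instance of the additive cocycle identity $\tau(x,y)+\tau(y,z)=\tau(x,z)$ (in the mixed case $x\le x_0\le y$ one uses $\tau(x,x_0)+\tau(x_0,y)=\tau(x,y)$ directly). The case where $x_0$ is incomparable with one of $x,y$ cannot occur precisely because $x_0$ is comparable with all elements.

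Having established $i(y,y) - i(x,x) = \tau(x,y)$, I would then compute $D_i(f)(x,y) = (fi)(x,y) - (if)(x,y) = \sum_{x\le z\le y} f(x,z)i(z,y) - \sum_{x\le z\le y} i(x,z)f(z,y)$. Since $i$ is diagonal, $i(z,y)$ is nonzero only for $z=y$ and $i(x,z)$ only for $z=x$, so this collapses to $f(x,y)i(y,y) - i(x,x)f(x,y) = \bigl(i(y,y)-i(x,x)\bigr)f(x,y) = \tau(x,y)f(x,y) = L_\tau(f)(x,y)$. As $f$, $x$, $y$ are arbitrary, $L_\tau = D_i \in \IDer(X,K)$. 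Hence $\ADer(X,K) \subseteq \IDer(X,K)$, and combining with \eqref{eqDerDec} gives $\Der(X,K) = \IDer(X,K)$, as claimed.

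The routine parts here are the case-checking of the identity $i(y,y)-i(x,x)=\tau(x,y)$ and the collapsing of the convolution sums; neither presents a real obstacle. The one genuinely load-bearing point — and the only place the hypothesis on $x_0$ is used — is that comparability of $x_0$ with every element guarantees the candidate $i$ is defined on all of $X$ and that for any comparable pair $x\le y$ the element $x_0$ sits in one of the three admissible positions relative to $[x,y]$, so that a single instance of the additive identity always applies. Without such an $x_0$ one cannot in general patch the local differences $\tau(x,y)$ into a global diagonal function, which is exactly why the full statement \eqref{eqDerDec} (rather than equality) is needed in general.
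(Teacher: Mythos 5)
Your proposal is correct and follows essentially the same route as the paper: the diagonal element $i$ you construct (with $i(x,x)=-\tau(x,x_0)$ for $x\leq x_0$ and $i(x,x)=\tau(x_0,x)$ for $x_0\leq x$) is exactly the element $f$ used in the paper's proof, and the three-case verification via the additive identity is the same. The only cosmetic difference is that you first isolate the identity $i(y,y)-i(x,x)=\tau(x,y)$ and then collapse the convolution sums, whereas the paper verifies $D_f(g)(x,y)=\tau(x,y)g(x,y)$ directly in each case.
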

\begin{proof}
Let $L_\tau \in \ADer(X,K)$ and define
$$f(x,y) = \begin{cases}
-\tau(x,x_0) &\text{if } x = y\leq x_0\\
\tau(x_0,x) &\text{if } x_0 \leq x= y\\
0 &\text{if } x\neq y
\end{cases}.$$
Obviously, $f \in FI(X,K)$. Let $g \in FI(X,K)$ and $x \leq y$ in $X$. Then
\begin{align*}
D_{f}(g)(x,y) = (gf)(x,y)-(fg)(x,y) = g(x,y)f(y,y)- f(x,x)g(x,y).
\end{align*}
Since $x_0$ is comparable with $x$ and $y$, we have three cases to analyse.
If $x \leq y \leq x_0$, then $\tau(x,x_0)= \tau(x,y) + \tau(y,x_0)$ and so
\begin{align*}
D_{f}(g)(x,y) & = -g(x,y)\tau(y,x_0)+\tau(x,x_0) g(x,y)\\
              & =  [\tau(x,x_0)-\tau(y,x_0)]g(x,y) \\
              & = \tau(x,y)g(x,y).
\end{align*}
The case $x_0 \leq x \leq y$ is similar and we also have $D_{f}(g)(x,y)=\tau(x,y)g(x,y)$. Finally, if $x \leq x_0 \leq y$, then
\begin{align*}
D_{f}(g)(x,y) & =  g(x,y)\tau(x_0,y)+ \tau(x,x_0)g(x,y)\\
              & =  [\tau(x,x_0)+\tau(x_0,y))]g(x,y) \\
              & = \tau(x,y)g(x,y).
\end{align*}
Thus, $L_\tau = D_{f}$ and so $\ADer(X,K) \subseteq \IDer(X,K)$. Therefore, $\Der(X,K) = \IDer(X,K)$, by \eqref{eqDerDec}.
\end{proof}

The existence of an element that is comparable with all the elements of $X$ is not necessary to ensure that every additive derivation is inner. Consider the followings posets  $X_1$ and $X_2$.
\begin{center}
\begin{tikzpicture}
\draw [fill=black] 
(-1.25,-0.5) node{$X_1$} (-2,0) circle(0.05)--(-2,.75)--(-2,1.5)circle(0.05)--(-1.25,.75)--(-.5,0)circle(0.05)--(-.5,.75)--(-.5,1.5)circle(0.05)
(2.75,-0.5) node{$X_2$} (2,0) circle(0.05)--(2,.75)--(2,1.5)circle(0.05)--(2.75,.75)--(3.5,0)circle(0.05)--(3.5,.75)--(3.5,1.5)circle(0.05)--(2.75,.75)--(2,0);
\end{tikzpicture}
\end{center}
Clearly, $X_1$ does not have an element that is comparable with all its elements, however $\ADer(X_1,K)\subseteq \IDer(X_1,K)$, by \cite[Example~3.6]{ER}. On the other hand, $\ADer(X_2,K) \nsubseteq \IDer(X_2,K)$ as seen in \cite[p.257]{SO97} or \cite[Example~3.7]{ER}.

We also recall that $\Mult(FI(X_1,K)) \subseteq \Inn(FI(X_1,K))$, by \cite[Example~7]{BMUL}, but $\Mult(FI(X_2,K)) \nsubseteq \Inn(FI(X_2,K))$ if $\Char K \neq 2$, by \cite[p.1238]{stanley} or \cite[Example~8]{BMUL}. However, $\Mult(FI(X_2,\mathds{Z}_2)) \subseteq \Inn(FI(X_2,\mathds{Z}_2))$, by \cite[Theorem~5]{BMUL}. Therefore, $I(X_2,\mathds{Z}_2)$ has the property that every multiplicative automorphism is inner, but admits an additive derivation which is not inner.

The existence of an element that is comparable with all the elements of $X$ is also sufficient to ensure that every multiplicative automorphism of $FI(X,K)$ is inner, by \cite[Proposition~2.4]{BFS14}.

Let $\eta \in \Aut(FI(X,K))$, $g$ a central unit of $FI(X,K)$ and $D \in \Der(X,K)$. It follows from the proof of \cite[Theorem~4.3]{DW} that
$$\widetilde{\eta} = \begin{bmatrix}
\eta &0\\
0 &\overline{\eta}
\end{bmatrix}\text{,}\quad \widetilde{g} = \begin{bmatrix}
\Id_{FI(X,K)} &0\\
0 & g \cdot
\end{bmatrix} \quad\text{and}\quad
\widetilde{D} = \begin{bmatrix}
\Id_{FI(X,K)} &0\\
D &\Id_{I(X,K)}
\end{bmatrix}$$
are automorphisms of $D(X,K)$, where $g\cdot:I(X,K) \to I(X,K)$ is the multiplication by $g$, and each $\phi \in \Aut(D(X,K))$ has the form
\begin{equation}\label{eqDecDP}
\phi = \widetilde{\eta} \circ \widetilde{g} \circ \widetilde{D}.
\end{equation}
Moreover,  $gD, D_\eta:= \overline{\eta} \circ D \circ \eta^{-1} \in \Der(X,K)$ and $\eta(g)$ is a central unit of $FI(X,K)$ such that
\begin{equation} \label{comutaaut}
\widetilde{\eta} \circ \widetilde{D}  = \widetilde{D_\eta}\circ \widetilde{\eta}\text{,} \quad \widetilde{g}\circ \widetilde{D}  = \widetilde{gD} \circ \widetilde{g} \quad \text{and} \quad \widetilde{\eta}\circ \widetilde{g}  = \widetilde{\eta(g)}\circ  \widetilde{\eta}.
\end{equation}
In the particular case when $D= D_i \in \IDer(X,K)$ and $g = k \delta$ for some $k \in K^*$, we have $(D_i)_{\eta} = D_{\overline{\eta}(i)} \in \IDer(X,K)$ and $\eta(k\delta)= k\delta$, and so
\begin{equation} \label{comutaautparticular}
\widetilde{\eta} \circ \widetilde{D_i}  = \widetilde{D_{\overline{\eta}(i)}}\circ \widetilde{\eta}\quad\text{and}\quad \widetilde{\eta}\circ \widetilde{k\delta}  = \widetilde{k\delta} \circ  \widetilde{\eta}.
\end{equation}

Let $\alpha:X \to X$ be an automorphism and consider the induced automorphism $\widehat{\alpha}:FI(X,K) \to FI(X,K)$. For simplicity, we denote the extension $\overline{\widehat{\alpha}}:I(X,K)\to I(X,K)$ and the automorphism $\widetilde{\widehat{\alpha}}:D(X,K) \to D(X,K)$ by $\overline{\alpha}$ and $\widetilde{\alpha}$, respectively. With this denotations, we have the following.

\begin{proposition} \label{propallcomparableAutDP}
Let $X$ be a connected poset such that $\Mult(FI(X,K)) \subseteq \Inn(FI(X,K))$ and $\Der(X,K) = \IDer(X,K)$ (in particular, when $X$ has an element that is comparable with all its elements). If $\phi \in \Aut(D(X,K))$, then $\phi= \Psi \circ  \widetilde{\alpha} \circ \widetilde{k\delta}$ where $\Psi \in \Inn(D(X,K))$, $\alpha$ is an automorphism of $X$ and $k \in K^{*}$.
\end{proposition}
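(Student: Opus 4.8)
The plan is to start from the Dugas--Wagner decomposition \eqref{eqDecDP}, $\phi=\widetilde{\eta}\circ\widetilde{g}\circ\widetilde{D}$ with $\eta\in\Aut(FI(X,K))$, $g$ a central unit of $FI(X,K)$ and $D\in\Der(X,K)$, and to simplify each of the three factors using the hypotheses. Since $X$ is connected, $FI(X,K)$ is a central algebra, so $Z(FI(X,K))=K\delta$; thus $g=k\delta$ for some $k\in K^{*}$ and $\widetilde{g}=\widetilde{k\delta}$ already has the desired shape. Since $\Der(X,K)=\IDer(X,K)$, we may write $D=D_i$ for some $i\in I(X,K)$. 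Finally, by Theorem~\ref{DecoAutFIP}, $\eta=\Psi_a\circ M_\sigma\circ\widehat{\alpha}$ with $\Psi_a\in\Inn(FI(X,K))$, $M_\sigma\in\Mult(FI(X,K))$ and $\alpha$ an automorphism of $X$; the hypothesis $\Mult(FI(X,K))\subseteq\Inn(FI(X,K))$ makes $M_\sigma$ inner, so $\Psi_a\circ M_\sigma=\Psi_c$ for some $c\in U(FI(X,K))$ and $\eta=\Psi_c\circ\widehat{\alpha}$. Using Remark~\ref{obsCompoExten} one checks that $\widetilde{\eta_1\circ\eta_2}=\widetilde{\eta_1}\circ\widetilde{\eta_2}$ for automorphisms $\eta_1,\eta_2$ (evaluate both sides on $\begin{bmatrix}f\\j\end{bmatrix}$), hence $\widetilde{\eta}=\widetilde{\Psi_c}\circ\widetilde{\alpha}$.

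The key step is to recognize the ``inner-type'' pieces $\widetilde{\Psi_c}$ and $\widetilde{D_i}$ as genuine inner automorphisms of $D(X,K)$. A direct computation with the multiplication of $D(X,K)$ shows that conjugation by the unit $\begin{bmatrix}c\\0\end{bmatrix}$ sends $\begin{bmatrix}f\\j\end{bmatrix}$ to $\begin{bmatrix}cfc^{-1}\\cjc^{-1}\end{bmatrix}$; since $j\mapsto cjc^{-1}$ is a bijective $\Psi_c$-SL extension of $\Psi_c$, it equals $\overline{\Psi_c}$ by the uniqueness in \cite[Proposition~4]{DW}, so this conjugation is exactly $\widetilde{\Psi_c}$. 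Likewise, conjugation by the unit $\begin{bmatrix}\delta\\-i\end{bmatrix}$, whose inverse is $\begin{bmatrix}\delta\\i\end{bmatrix}$, sends $\begin{bmatrix}f\\j\end{bmatrix}$ to $\begin{bmatrix}f\\j+fi-if\end{bmatrix}=\begin{bmatrix}f\\j+D_i(f)\end{bmatrix}$, which is precisely $\widetilde{D_i}\!\left(\begin{bmatrix}f\\j\end{bmatrix}\right)$. Hence $\widetilde{\Psi_c},\widetilde{D_i}\in\Inn(D(X,K))$.

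It then remains to reorganize $\phi=\widetilde{\Psi_c}\circ\widetilde{\alpha}\circ\widetilde{k\delta}\circ\widetilde{D_i}$ so that all inner factors sit on the left. For any automorphism $\Theta$ of $D(X,K)$ and any unit $u$ one has $\Theta\circ\Psi_u=\Psi_{\Theta(u)}\circ\Theta$; applying this with $\Theta=\widetilde{\alpha}\circ\widetilde{k\delta}$ and $u=\begin{bmatrix}\delta\\-i\end{bmatrix}$ gives $\widetilde{\alpha}\circ\widetilde{k\delta}\circ\widetilde{D_i}=\Psi_w\circ\widetilde{\alpha}\circ\widetilde{k\delta}$ with $w=(\widetilde{\alpha}\circ\widetilde{k\delta})\!\left(\begin{bmatrix}\delta\\-i\end{bmatrix}\right)\in U(D(X,K))$. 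Since a composition of inner automorphisms is inner, $\Psi:=\widetilde{\Psi_c}\circ\Psi_w\in\Inn(D(X,K))$ and $\phi=\Psi\circ\widetilde{\alpha}\circ\widetilde{k\delta}$, as required. (Alternatively, one can push $\widetilde{D_i}$ to the left through $\widetilde{k\delta}$ and $\widetilde{\alpha}$ via the commutation relations \eqref{comutaaut} and \eqref{comutaautparticular}, which again produce an inner $\widetilde{D_j}$.)

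Since the whole argument is essentially bookkeeping on \eqref{eqDecDP}, I do not expect a deep obstacle; the point requiring genuine care is the explicit identification of $\widetilde{\Psi_c}$ and $\widetilde{D_i}$ as inner automorphisms of $D(X,K)$ in the second paragraph — in particular, pinning down $\overline{\Psi_c}$ through the uniqueness of the $\Psi_c$-SL extension and getting the signs right in the conjugation by $\begin{bmatrix}\delta\\-i\end{bmatrix}$.
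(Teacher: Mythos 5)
Your proof is correct and follows essentially the same route as the paper: start from the decomposition \eqref{eqDecDP}, use connectedness to get $g=k\delta$, use Theorem~\ref{DecoAutFIP} with the hypothesis $\Mult(FI(X,K))\subseteq\Inn(FI(X,K))$ to write $\widetilde{\eta}=\widetilde{\Psi_c}\circ\widetilde{\alpha}$, and then move the inner pieces to the left. The only difference is cosmetic: where the paper pushes $\widetilde{D_i}$ leftward via the explicit commutation relations \eqref{comutaaut}--\eqref{comutaautparticular} and cites \cite[Proposition~7]{DW} for innerness, you verify the innerness of $\widetilde{\Psi_c}$ and $\widetilde{D_i}$ by direct conjugation computations and invoke normality of $\Inn(D(X,K))$ — both computations are correct.
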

\begin{proof}
By \eqref{eqDecDP}, $\phi = \widetilde{\eta} \circ \widetilde{g} \circ  \widetilde{D_i}$ for some $i \in I(X,K)$, $\eta \in \Aut(FI(X,K))$ and $g$ a central unit of $FI(X,K)$. Since $X$ is connected, $FI(X,K)$ is a central algebra and, therefore, $g = k\delta$ for some $k \in K^*$. By Theorem~\ref{DecoAutFIP}, $\eta = \Psi_f \circ \widehat{\alpha}$ for some $f \in U(FI(X,K))$, where $\alpha$ is the automorphism of $X$ induced by $\eta$. By Remark~\ref{obsCompoExten}, $\widetilde{\eta} =  \widetilde{\Psi_f \circ \widehat{\alpha}} = \widetilde{\Psi_f} \circ \widetilde{\alpha}$, thus, by \eqref{comutaaut} and \eqref{comutaautparticular}, we have
\begin{align*}
\phi = \widetilde{\eta} \circ \widetilde{k\delta} \circ \widetilde{D_i} = \widetilde{\Psi_f} \circ \widetilde{\alpha} \circ \widetilde{kD_i} \circ \widetilde{k\delta} = \widetilde{\Psi_f} \circ \widetilde{\alpha} \circ \widetilde{D_{ki}} \circ \widetilde{k\delta} =  \widetilde{\Psi_f} \circ \widetilde{D_{\overline{\alpha}(ki)}} \circ  \widetilde{\alpha}\circ \widetilde{k\delta}.
\end{align*}
By \cite[Proposition~7]{DW}, $\Psi:= \widetilde{\Psi_f} \circ \widetilde{D_{\overline{\alpha}(ki)}} \in \Inn(D(X,K))$ and we have $\phi= \Psi \circ  \widetilde{\alpha} \circ \widetilde{k\delta}$.
\end{proof}

\section{Characterization of the anti-automorphisms and involutions on $D(X,K)$}\label{SCharac}

Let $\Phi$ and $\varphi$ be anti-automorphisms (resp.~involutions) of $D(X,K)$ and $FI(X,K)$, respectively. We denote by $\lambda_\Phi$ and $\lambda_\varphi$ the anti-automorphisms (resp.~involutions) on $X$ induced by $\Phi$ and $\varphi$, respectively. We recall that for each $x \in X$, $\Phi \left(\begin{bmatrix}e_x\\0 \end{bmatrix}\right)$ is conjugate to $\begin{bmatrix}e_{\lambda_{\Phi}(x)}\\0 \end{bmatrix}$ and $\varphi(e_x)$ is conjugate to $e_{\lambda_{\varphi}(x)}$, by Theorem~\ref{teoantiisoDP} and \cite[Theorem~3]{BFS12}.

\begin{lemma} \label{lemaphi12}
Let $\Phi = \begin{bmatrix}
\phi_{11} &\phi_{12}\\
\phi_{21} &\phi_{22}
\end{bmatrix}$ be an anti-automorphism of $D(X,K)$. Then $\phi_{11}$ is an anti-automorphism of $FI(X,K)$ such that $\lambda_{\Phi} = \lambda_{\phi_{11}}$, $\phi_{12}=0$ and $\phi_{22} \in \Aut_K(I(X,K))$. Moreover, if $\Phi$ is an involution, then $\phi_{11}$ is an involution on $FI(X,K)$ and $(\phi_{22})^2= \Id_{I(X,K)}$.
\end{lemma}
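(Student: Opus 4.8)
The plan is to peel off a ``diagonal'' anti-automorphism so that what is left is an automorphism of $D(X,K)$, for which the structure in \eqref{eqDecDP} is available. Since $\Phi$ is an anti-automorphism, $\lambda_\Phi$ is an anti-automorphism of $X$, so Theorem~\ref{teoantiisoDP} (with $Y=X$) produces an anti-automorphism $\Upsilon$ of $D(X,K)$ given by $\Upsilon\left(\begin{bmatrix} f\\i\end{bmatrix}\right)=\begin{bmatrix}\rho_{\lambda_\Phi}(f)\\\overline{\rho_{\lambda_\Phi}}(i)\end{bmatrix}$, whose block matrix with respect to $D(X,K)=FI(X,K)\oplus I(X,K)$ is diagonal, namely $\begin{bmatrix}\rho_{\lambda_\Phi}&0\\0&\overline{\rho_{\lambda_\Phi}}\end{bmatrix}$, with $\rho_{\lambda_\Phi}$ an anti-automorphism of $FI(X,K)$ and $\overline{\rho_{\lambda_\Phi}}$ a $K$-linear bijection of $I(X,K)$ by Lemma~\ref{exten}. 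Then $\Psi:=\Phi\circ\Upsilon^{-1}$ is a composition of two anti-automorphisms, hence a ($K$-linear) automorphism of $D(X,K)$.

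First I would work out the block form of $\Psi$. By \eqref{eqDecDP}, $\Psi=\widetilde{\eta}\circ\widetilde{g}\circ\widetilde{D}$ for some $\eta\in\Aut(FI(X,K))$, a central unit $g$ of $FI(X,K)$ and $D\in\Der(X,K)$; multiplying the three block matrices $\widetilde{\eta}$, $\widetilde{g}$, $\widetilde{D}$ one finds that $\Psi$ is lower triangular with diagonal blocks $\eta$ and $\overline{\eta}\circ(g\cdot)$, the latter being a $K$-linear bijection of $I(X,K)$ since $\overline{\eta}$ is bijective by \cite[Proposition~4]{DW} and $g\cdot$ has inverse $g^{-1}\cdot$. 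Consequently $\Phi=\Psi\circ\Upsilon$ is again lower triangular, which gives $\phi_{12}=0$; moreover $\phi_{11}=\eta\circ\rho_{\lambda_\Phi}$ is an automorphism composed with an anti-automorphism of $FI(X,K)$, hence an anti-automorphism, and $\phi_{22}=\bigl(\overline{\eta}\circ(g\cdot)\bigr)\circ\overline{\rho_{\lambda_\Phi}}$ is a composition of $K$-linear bijections of $I(X,K)$, hence lies in $\Aut_K(I(X,K))$.

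To identify $\lambda_\Phi$ with $\lambda_{\phi_{11}}$ I would pass to the quotient by the ideal $0(+)I(X,K)$: the canonical epimorphism $\pi\colon D(X,K)\to D(X,K)/(0(+)I(X,K))\cong FI(X,K)$, $\begin{bmatrix}h\\l\end{bmatrix}\mapsto h$, is a ring homomorphism, so it carries conjugate elements to conjugate elements. Since $\phi_{12}=0$, for each $x\in X$ we have $\Phi\left(\begin{bmatrix}e_x\\0\end{bmatrix}\right)=\begin{bmatrix}\phi_{11}(e_x)\\\phi_{21}(e_x)\end{bmatrix}$, which is conjugate in $D(X,K)$ to $\begin{bmatrix}e_{\lambda_\Phi(x)}\\0\end{bmatrix}$; applying $\pi$ shows $\phi_{11}(e_x)$ is conjugate to $e_{\lambda_\Phi(x)}$ in $FI(X,K)$. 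On the other hand, $\phi_{11}(e_x)$ is conjugate to $e_{\lambda_{\phi_{11}}(x)}$ by the definition of $\lambda_{\phi_{11}}$ (\cite[Theorem~3]{BFS12}); since conjugate primitive idempotents of $FI(X,K)$ determine the same point of $X$, this forces $\lambda_\Phi(x)=\lambda_{\phi_{11}}(x)$, i.e.\ $\lambda_\Phi=\lambda_{\phi_{11}}$.

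Finally, if $\Phi$ is an involution, then, using $\phi_{12}=0$, squaring the block matrix gives $\Phi^2=\begin{bmatrix}\phi_{11}^2&0\\\phi_{21}\phi_{11}+\phi_{22}\phi_{21}&\phi_{22}^2\end{bmatrix}$, so $\Phi^2=\Id_{D(X,K)}$ forces $\phi_{11}^2=\Id_{FI(X,K)}$ and $(\phi_{22})^2=\Id_{I(X,K)}$; together with the first part, $\phi_{11}$ is an involution on $FI(X,K)$. The only delicate point is the bookkeeping with the $2\times2$ block matrices — in particular checking that the diagonal blocks of $\Psi$ are exactly $\eta$ and $\overline{\eta}\circ(g\cdot)$ and that no off-diagonal term in the upper right survives; everything else is an immediate application of Theorem~\ref{teoantiisoDP}, \eqref{eqDecDP}, Lemma~\ref{exten} and \cite[Proposition~4]{DW}.
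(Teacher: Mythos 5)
Your proof is correct, but it takes a different route from the paper's. The paper argues directly on $\Phi$ itself: it gets $\phi_{12}=0$ by the argument of \cite[Lemma~4.2]{DW}, reads off that $\phi_{11}$ is a unital anti-homomorphism by restricting the identity $\Phi\left(\begin{bmatrix}f\\0\end{bmatrix}\begin{bmatrix}g\\0\end{bmatrix}\right)=\Phi\left(\begin{bmatrix}g\\0\end{bmatrix}\right)\Phi\left(\begin{bmatrix}f\\0\end{bmatrix}\right)$ to the subring $FI(X,K)(+)0$, obtains invertibility of $\phi_{11}$ and $\phi_{22}$ from the block form of $\Phi^{-1}$, and extracts $\lambda_{\phi_{11}}=\lambda_\Phi$ from the first component of the conjugation $\Phi\left(\begin{bmatrix}e_x\\0\end{bmatrix}\right)=\begin{bmatrix}g_x\\i_x\end{bmatrix}\begin{bmatrix}e_{\lambda_\Phi(x)}\\0\end{bmatrix}\begin{bmatrix}g_x\\i_x\end{bmatrix}^{-1}$ (your quotient map $\pi$ is just this projection onto the first coordinate, so that step coincides). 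Your route instead factors $\Phi=\Psi\circ\Upsilon$ with $\Upsilon$ the diagonal anti-automorphism built from $\lambda_\Phi$ via Theorem~\ref{teoantiisoDP} and Lemma~\ref{exten}, and imports the lower-triangular shape of the automorphism $\Psi$ from the decomposition \eqref{eqDecDP}. This is essentially the same manoeuvre the paper performs later in Theorem~\ref{teoDecinvDP} (composing with $\widetilde{\rho^{-1}}$), and it is not circular, since you use $\rho_{\lambda_\Phi}$ rather than $\phi_{11}$ to build the diagonal factor; what it buys you is that $\phi_{12}=0$ and the bijectivity of $\phi_{11}$ and $\phi_{22}$ fall out of known structure instead of being checked by hand, and as a bonus you get the explicit factorization $\phi_{11}=\eta\circ\rho_{\lambda_\Phi}$. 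The cost is reliance on the heavier statement \eqref{eqDecDP} (itself resting on \cite[Lemma~4.2 and Theorem~4.3]{DW}), where the paper's direct verification is more elementary and self-contained. Your treatment of the involution case by squaring the lower-triangular block matrix agrees with the paper's (which phrases it as $\Phi^{-1}=\Phi$).
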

\begin{proof}
Following the same ideas as in the proof of \cite[Lemma~4.2]{DW}, we can show that $\phi_{12} = 0$. Since $\Phi\left(\begin{bmatrix}\delta\\0 \end{bmatrix}\right) = \begin{bmatrix}\delta\\0 \end{bmatrix}$, we have $\begin{bmatrix}\phi_{11}(\delta)\\ \phi_{21}(\delta) \end{bmatrix} = \begin{bmatrix}\delta\\0 \end{bmatrix}$ and, consequently, $\phi_{11}(\delta)=\delta$. Moreover, for all $f,g \in FI(X,K)$, $\Phi\left(\begin{bmatrix}f\\0 \end{bmatrix}\begin{bmatrix}g\\0 \end{bmatrix}\right) =\Phi\left(\begin{bmatrix}g\\0 \end{bmatrix}\right) \Phi\left(\begin{bmatrix}f\\0 \end{bmatrix}\right)$ implies $\phi_{11}(fg)= \phi_{11}(g)\phi_{11}(f)$. Thus, $\phi_{11}$ is an anti-homomorphism of $FI(X,K)$. From the equalities $\Phi \circ \Phi^{-1}=  \Phi^{-1}\circ\Phi = \Id_{D(X,K)}$  it follows that  $\Phi^{-1}= \begin{bmatrix}
\phi_{11}^{-1} &0\\
\varphi &\phi_{22}^{-1}
\end{bmatrix}$ for some $\varphi \in \Hom_K(FI(X,K),I(X,K))$. Therefore, $\phi_{11}$ is an anti-automorphism of $FI(X,K)$ and $\phi_{22} \in \Aut_K(I(X,K))$. Moreover, if $\Phi \left(\begin{bmatrix}e_x\\0 \end{bmatrix}\right) =\begin{bmatrix}g_x\\i_x \end{bmatrix}\begin{bmatrix}e_{\lambda_{\Phi}(x)}\\0 \end{bmatrix}\begin{bmatrix}g_x\\i_x \end{bmatrix}^{-1}$, then $\phi_{11}(e_x) = g_x e_{\lambda_{\Phi}(x)} g_x^{-1}$ and, therefore, $\lambda_{\phi_{11}}= \lambda_{\Phi}$.

If $\Phi$ is an involution on $D(X,K)$, then $\Phi^{-1}=\Phi$. Thus $\phi_{11}$ is an involution of $FI(X,K)$ and $(\phi_{22})^2 = \Id_{I(X,K)}$.
\end{proof}

Let $\rho$ be an anti-automorphism of $FI(X,K)$ and $\overline{\rho}: I(X,K) \to I(X,K)$ as in Proposition~\ref{propesten}. It is easy to check that $\widetilde{\rho}=  \begin{bmatrix} \rho &0\\ 0 &\overline{\rho} \end{bmatrix}$ is an anti-automorphism of $D(X,K)$ such that ($\widetilde{\rho})^{-1}= \widetilde{\rho^{-1}}$. In particular, if $\rho$ is an involution, so is $\widetilde{\rho}$.

\begin{theorem} \label{teoDecinvDP}
Let $\Phi = \begin{bmatrix} \rho &0\\ \phi_{21} &\phi_{22} \end{bmatrix}$ be an anti-automorphism (involution) of $D(X,K)$. Then $\Phi = \widetilde{\rho}\circ \widetilde{g}\circ \widetilde{D}$ where $\rho$ is an anti-automorphism (involution) of $FI(X,K)$ such that $\lambda_{\Phi} = \lambda_{\rho}=\lambda_{\widetilde{\rho}}$, $g$ is a central unit of $FI(X,K)$ and $D \in \Der(X,K)$.
\end{theorem}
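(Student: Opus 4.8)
\textit{Proof plan.} The plan is to reduce the statement to the already-established structure theorem for the \emph{automorphisms} of $D(X,K)$ (equation~\eqref{eqDecDP}) by composing $\Phi$ on the left with a single fixed anti-automorphism. First, by Lemma~\ref{lemaphi12}, the $(1,1)$-entry $\rho=\phi_{11}$ of $\Phi$ is an anti-automorphism of $FI(X,K)$ with $\lambda_\Phi=\lambda_\rho$, and $\rho$ is an involution on $FI(X,K)$ whenever $\Phi$ is an involution on $D(X,K)$. Form $\widetilde{\rho}=\begin{bmatrix}\rho&0\\0&\overline{\rho}\end{bmatrix}$, which by the remarks preceding the theorem is an anti-automorphism of $D(X,K)$ with $(\widetilde{\rho})^{-1}=\widetilde{\rho^{-1}}$ (and an involution when $\rho$ is).

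Next, set $\psi:=(\widetilde{\rho})^{-1}\circ\Phi=\widetilde{\rho^{-1}}\circ\Phi$. As a composite of two anti-automorphisms of $D(X,K)$, $\psi$ is a $K$-linear automorphism of $D(X,K)$; and multiplying the $2\times 2$ presentations, using $\widetilde{\rho^{-1}}=\begin{bmatrix}\rho^{-1}&0\\0&\overline{\rho^{-1}}\end{bmatrix}$ together with $(\overline{\rho})^{-1}=\overline{\rho^{-1}}$ from Proposition~\ref{propesten}, shows that the $(1,1)$-entry of $\psi$ is $\Id_{FI(X,K)}$. Applying~\eqref{eqDecDP} to $\psi$ gives $\psi=\widetilde{\eta}\circ\widetilde{g}\circ\widetilde{D}$ for some $\eta\in\Aut(FI(X,K))$, some central unit $g$ of $FI(X,K)$ and some $D\in\Der(X,K)$. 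Since $\widetilde{g}$ and $\widetilde{D}$ both act as the identity in the first coordinate, the $(1,1)$-entry of $\widetilde{\eta}\circ\widetilde{g}\circ\widetilde{D}$ equals $\eta$; comparing with the previous line forces $\eta=\Id_{FI(X,K)}$, hence $\widetilde{\eta}=\Id_{D(X,K)}$ and $\psi=\widetilde{g}\circ\widetilde{D}$. Composing on the left with $\widetilde{\rho}$ then yields $\Phi=\widetilde{\rho}\circ\widetilde{g}\circ\widetilde{D}$ with $g$ a central unit and $D\in\Der(X,K)$ as required; when $\Phi$ is an involution, $\rho$ is already an involution by Lemma~\ref{lemaphi12}.

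Finally, for the assertion on induced maps: $\lambda_\Phi=\lambda_\rho$ is part of Lemma~\ref{lemaphi12}, and $\lambda_\rho=\lambda_{\widetilde{\rho}}$ follows because, if $\rho(e_x)=a\,e_{\lambda_\rho(x)}\,a^{-1}$ with $a\in U(FI(X,K))$, then $\widetilde{\rho}\left(\begin{bmatrix}e_x\\0\end{bmatrix}\right)=\begin{bmatrix}\rho(e_x)\\0\end{bmatrix}=\begin{bmatrix}a\\0\end{bmatrix}\begin{bmatrix}e_{\lambda_\rho(x)}\\0\end{bmatrix}\begin{bmatrix}a\\0\end{bmatrix}^{-1}$, so $\widetilde{\rho}\left(\begin{bmatrix}e_x\\0\end{bmatrix}\right)$ is conjugate to $\begin{bmatrix}e_{\lambda_\rho(x)}\\0\end{bmatrix}$ in $D(X,K)$, forcing $\lambda_{\widetilde{\rho}}=\lambda_\rho$ by the uniqueness built into the definition of the induced map. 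I expect no real obstacle here; the only point needing a bit of care is the bookkeeping that the $(1,1)$-entry of $\psi$ is the identity and that this kills the factor $\widetilde{\eta}$ in~\eqref{eqDecDP} — the rest is formal manipulation of the matrix presentations together with Lemma~\ref{lemaphi12}, Proposition~\ref{propesten}, and the remarks immediately preceding the theorem.
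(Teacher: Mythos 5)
Your proposal is correct and follows essentially the same route as the paper: invoke Lemma~\ref{lemaphi12} to extract $\rho$, compose with $\widetilde{\rho^{-1}}$ to land in $\Aut(D(X,K))$, apply~\eqref{eqDecDP}, and compare $(1,1)$-entries to kill $\widetilde{\eta}$ (the paper makes explicit, via Lemma~\ref{extId}, that $\eta=\Id_{FI(X,K)}$ forces $\overline{\eta}=\Id_{I(X,K)}$, a step you pass over but which is immediate from the uniqueness of the extension). Your explicit verification that $\lambda_\rho=\lambda_{\widetilde{\rho}}$ is a small addition the paper leaves implicit.
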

\begin{proof}
By Lemma~\ref{lemaphi12}, $\rho$ is an anti-automorphism (involution) of $FI(X,K)$ such that $\lambda_{\Phi} = \lambda_{\rho}=\lambda_{\widetilde{\rho}}$.   Consider the anti-automorphism (involution) $\widetilde{{\rho}^{-1}}$ of $D(X,K)$. Then $\widetilde{\rho^{-1}}\circ \Phi \in \Aut(D(X,K))$. By  \eqref{eqDecDP}, there are $\eta \in \Aut(FI(X,K))$, $D \in \Der(X,K)$  and $g$ a central unit of $FI(X,K)$ such that $\widetilde{\rho^{-1}}\circ \Phi = \widetilde{\eta}\circ \widetilde{g}\circ \widetilde{D}$. Since
$$\widetilde{\rho^{-1}}\circ \Phi  = \begin{bmatrix}
\Id_{FI(X,K)} &0\\
(\overline{\rho})^{-1}\circ\phi_{21} &(\overline{\rho})^{-1}\circ\phi_{22}
\end{bmatrix}
\text{ and }\widetilde{\eta}\circ \widetilde{g}\circ \widetilde{D} =   \begin{bmatrix}
\eta &0\\
\overline{\eta} \circ g\cdot\circ D &\overline{\eta} \circ g\cdot
\end{bmatrix},$$
we have $\eta = \Id_{FI(X,K)}$ and, by Lemma~\ref{extId}, $\overline{\eta}= \Id_{I(X,K)}$. Thus, $\widetilde{\eta}= \Id_{D(X,K)}$ and, therefore, $\Phi = \widetilde{\rho}\circ \widetilde{g}\circ \widetilde{D}$.
\end{proof}

\begin{remark}
Let $\rho$ be an anti-automorphism of $FI(X,K)$, $g \in FI(X,K)$  a central unit and $D \in \Der(X,K)$. As for automorphisms, we denote $D_\rho = \overline{\rho}\circ D \circ \rho^{-1}$. It is easy to show that $D_\rho \in \Der(X,K)$ and
\begin{equation} \label{comutaantiaut}
\widetilde{\rho} \circ \widetilde{D}  = \widetilde{D_\rho}\circ \widetilde{\rho}\quad\text{and}\quad \widetilde{\rho}\circ \widetilde{g}  = \widetilde{\rho(g)}\circ  \widetilde{\rho}.
\end{equation}
In the particular case when $D=D_i \in \IDer(X,K)$ and $g= k\delta$ for some $k \in K^*$, we have
\begin{equation} \label{comutaantiautparticular}
\widetilde{\rho} \circ \widetilde{D_i}  = \widetilde{D_{-\overline{\rho}(i)}} \circ \widetilde{\rho} \quad \text{and} \quad \widetilde{\rho}\circ \widetilde{k\delta}  = \widetilde{k\delta} \circ  \widetilde{\rho}.
\end{equation}
\end{remark}

\begin{corollary} \label{coroallcomparableAntiauttDP}
Let $X$ be a connected poset such that $\Der(X,K)=\IDer(X,K)$ and $\Mult(FI(X,K)) \subseteq \Inn(FI(X,K))$ (in particular, when $X$ has an element that is comparable with all its elements). If $\Phi$ is an anti-automorphism (involution) of $D(X,K)$, then $\Phi= \Psi \circ  \widetilde{\rho_\lambda} \circ \widetilde{k\delta}$ where $\Psi \in \Inn(D(X,K))$, $\lambda$ is the anti-automorphism (involution) of $X$ induced by $\Phi$ and $k \in K^{*}$. Moreover, when $\Phi$ is an involution, $k=\pm 1$.
\end{corollary}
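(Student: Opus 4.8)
The plan is to feed $\Phi$ through the structural Theorem~\ref{teoDecinvDP} and then absorb every ``inner'' ingredient into a single inner automorphism of $D(X,K)$, exactly as was done for automorphisms in the proof of Proposition~\ref{propallcomparableAutDP}. By Theorem~\ref{teoDecinvDP}, write $\Phi = \widetilde{\rho}\circ\widetilde{g}\circ\widetilde{D}$ with $\rho$ an anti-automorphism (involution) of $FI(X,K)$ satisfying $\lambda_\rho=\lambda_\Phi$, $g$ a central unit of $FI(X,K)$ and $D\in\Der(X,K)$. Since $X$ is connected, $Z(FI(X,K))=\{k\delta:k\in K\}$ by \eqref{eqcenralizadores}, so $g=k\delta$ for some $k\in K^{*}$; and since $\Der(X,K)=\IDer(X,K)$, we have $D=D_i$ for some $i\in I(X,K)$. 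Applying Theorem~\ref{DecoAutFIP} and the remark following it to $\rho$, together with $\Mult(FI(X,K))\subseteq\Inn(FI(X,K))$, I can write $\rho=\Psi_a\circ\rho_\lambda$ for some $a\in U(FI(X,K))$, where $\lambda$ is the anti-automorphism (involution) of $X$ induced by $\Phi$; by Remark~\ref{obsCompoExten} this gives $\widetilde{\rho}=\widetilde{\Psi_a}\circ\widetilde{\rho_\lambda}$, hence $\Phi=\widetilde{\Psi_a}\circ\widetilde{\rho_\lambda}\circ\widetilde{k\delta}\circ\widetilde{D_i}$.

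The next step is to shuffle the factors $\widetilde{k\delta}$ and $\widetilde{D_i}$ to the right of $\widetilde{\rho_\lambda}$ using the commutation relations. By \eqref{comutaaut} and the identity $(k\delta)D_i=D_{ki}$ we get $\widetilde{k\delta}\circ\widetilde{D_i}=\widetilde{D_{ki}}\circ\widetilde{k\delta}$, and by \eqref{comutaantiautparticular} we get $\widetilde{\rho_\lambda}\circ\widetilde{D_{ki}}=\widetilde{D_{-\overline{\rho_\lambda}(ki)}}\circ\widetilde{\rho_\lambda}$. Setting $i'=-\overline{\rho_\lambda}(ki)\in I(X,K)$ this yields $\Phi=\widetilde{\Psi_a}\circ\widetilde{D_{i'}}\circ\widetilde{\rho_\lambda}\circ\widetilde{k\delta}$, and by \cite[Proposition~7]{DW} the composite $\Psi:=\widetilde{\Psi_a}\circ\widetilde{D_{i'}}$ is an inner automorphism of $D(X,K)$; this gives $\Phi=\Psi\circ\widetilde{\rho_\lambda}\circ\widetilde{k\delta}$ with $\lambda$ the induced anti-automorphism (involution) of $X$ and $k\in K^{*}$, as claimed. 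This portion is just bookkeeping with the relations and follows the template of Proposition~\ref{propallcomparableAutDP}.

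For the last assertion, suppose $\Phi$ is an involution. Then $\lambda$ is an involution on $X$, so $\rho_\lambda$ is an involution on $FI(X,K)$; hence $(\widetilde{\rho_\lambda})^{2}=\Id_{D(X,K)}$ by Corollary~\ref{coroexteninvo}, and $\widetilde{\rho_\lambda}$ commutes with $\widetilde{k\delta}$ by \eqref{comutaantiautparticular}. Thus $\varphi:=\widetilde{\rho_\lambda}\circ\widetilde{k\delta}$ is an anti-automorphism of $D(X,K)$ with $\varphi^{2}=\widetilde{k^{2}\delta}$. Writing $\Psi=\Psi_u$ with $u\in U(D(X,K))$ and using that $\varphi$ is an anti-automorphism (so $\varphi\circ\Psi_u=\Psi_{\varphi(u)^{-1}}\circ\varphi$), a direct computation gives $\Phi^{2}=\Psi_{u\varphi(u)^{-1}}\circ\widetilde{k^{2}\delta}$; therefore $\Phi^{2}=\Id_{D(X,K)}$ forces $\widetilde{k^{2}\delta}\in\Inn(D(X,K))$. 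The remaining point, which I would isolate as a short computation, is that $\widetilde{m\delta}$ is inner only when $m=1$: if $\widetilde{m\delta}=\Psi_v$ for a unit $v=\begin{bmatrix}h\\j\end{bmatrix}$ of $D(X,K)$, then comparing first components on $\begin{bmatrix}f\\0\end{bmatrix}$ shows $h\in Z(FI(X,K))=\{c\delta:c\in K\}$, and then comparing second components on $\begin{bmatrix}0\\\delta\end{bmatrix}$ gives $m=1$. Hence $k^{2}=1$, i.e. $k=\pm1$. The only genuine obstacle in the whole argument is keeping the commutation relations and signs straight and checking this final inner-triviality claim; everything else runs parallel to the automorphism case.
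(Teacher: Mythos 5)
Your proof of the main decomposition is exactly the paper's: the paper states that part is "analogous to the proof of Proposition~\ref{propallcomparableAutDP}, just replacing \eqref{eqDecDP} by Theorem~\ref{teoDecinvDP} and \eqref{comutaaut}, \eqref{comutaautparticular} by \eqref{comutaantiaut}, \eqref{comutaantiautparticular}", and your bookkeeping with $\widetilde{k\delta}\circ\widetilde{D_i}=\widetilde{D_{ki}}\circ\widetilde{k\delta}$ and $\widetilde{\rho_\lambda}\circ\widetilde{D_{ki}}=\widetilde{D_{-\overline{\rho_\lambda}(ki)}}\circ\widetilde{\rho_\lambda}$ is precisely that computation. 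Where you diverge is the final claim $k=\pm1$: the paper simply evaluates $\Phi$ twice at the central element $\begin{bmatrix}0\\\delta\end{bmatrix}$ — the inner factor fixes it, $\widetilde{\rho_\lambda}$ fixes $\begin{bmatrix}0\\k\delta\end{bmatrix}$, and $\widetilde{k\delta}$ scales the second component by $k$, so $\Phi^2\left(\begin{bmatrix}0\\\delta\end{bmatrix}\right)=\begin{bmatrix}0\\k^2\delta\end{bmatrix}$ forces $k^2=1$ in two lines. You instead compute $\Phi^{2}=\Psi_{u\varphi(u)^{-1}}\circ\widetilde{k^{2}\delta}$ and reduce to showing that $\widetilde{m\delta}$ inner implies $m=1$; your verification of that reduction (pass to $h\in Z(FI(X,K))=\{c\delta\}$ via the first component, so $\Psi_v=\widetilde{D_{i'}}$, then evaluate at $\begin{bmatrix}0\\\delta\end{bmatrix}$) is correct, and in fact this inner-triviality statement is exactly what the paper invokes later (citing \cite[Theorem~4.3 and Proposition~7]{DW}) in Corollary~\ref{coroinduzesinal}, so you have supplied a self-contained proof of a fact the paper outsources. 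Your route is longer but buys a reusable lemma; the paper's is shorter because it exploits centrality of $\begin{bmatrix}0\\\delta\end{bmatrix}$ directly. Both are valid.
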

\begin{proof}
The proof that $\Phi= \Psi \circ \widetilde{\rho_\lambda} \circ \widetilde{k\delta}$ where $\Psi\in \Inn(D(X,K))$, $\lambda$ is the anti-automorphism (involution) of $X$ induced by $\Phi$ and $k \in K^{*}$ is analogous to the proof of Proposition~\ref{propallcomparableAutDP}, just replacing \eqref{eqDecDP} by Theorem~\ref{teoDecinvDP} and the equalities \eqref{comutaaut} and \eqref{comutaautparticular} by \eqref{comutaantiaut} and \eqref{comutaantiautparticular}, respectively.

Suppose $\Phi$ is an involution. We have
\begin{align*}
\Phi \left( \begin{bmatrix}0\\\delta \end{bmatrix} \right)
&= (\Psi \circ \widetilde{\rho_\lambda} \circ \widetilde{k \delta}) \left( \begin{bmatrix}0\\ \delta \end{bmatrix} \right)
= ( \Psi \circ \widetilde{\rho_\lambda}) \left( \begin{bmatrix}0\\ k\delta \end{bmatrix} \right)
\\&=   \Psi  \left( \begin{bmatrix}0\\ \overline{\rho_\lambda}(k\delta)\end{bmatrix} \right)
=  \Psi  \left( \begin{bmatrix}0\\ k\delta\end{bmatrix} \right)= \begin{bmatrix}0\\ k\delta \end{bmatrix},
\end{align*}
since $\begin{bmatrix}0\\ k\delta \end{bmatrix} \in Z(D(X,K))$. Thus, $$\begin{bmatrix} 0 \\ \delta \end{bmatrix} = \Phi^{2}\left( \begin{bmatrix}0\\\delta \end{bmatrix} \right) = \Phi \left( k\begin{bmatrix}0\\ \delta \end{bmatrix} \right) = k \begin{bmatrix}0\\ k\delta \end{bmatrix} = \begin{bmatrix}0\\ k^2\delta \end{bmatrix}$$
and, therefore, $k=\pm 1$.
\end{proof}

\section{Classification of involutions on $D(X,K)$}\label{Sclassifi}

Let $\phi$ and $\varphi$ be involutions on a $K$-algebra $A$. We recall that $\phi$ and $\varphi$ are \emph{equivalent} if there is $\Phi \in \Aut(A)$ such that $\Phi\circ \phi = \varphi \circ \Phi$.

In this section we give the classification of involutions on $D(X,K)$ in the case when $X$ is a connected poset such that $\Mult(FI(X,K)) \subseteq \Inn(FI(X,K))$ and $\Der(X,K) = \IDer(X,K)$, and $K$ is a field of characteristic different from $2$.

We start by looking for necessary conditions for two involutions on $D(X,K)$ to be equivalent (via inner automorphisms).

\begin{remark} \label{obsInnPsiepsitiu}
Let $\theta = \begin{bmatrix} f \\ j\end{bmatrix} \in U(D(X,K))$. It is easy to prove (see proof of \cite[Proposition~7]{DW}) that $\Psi_\theta = \widetilde{\Psi_f} \circ \widetilde{D_{-f^{-1}j}}$.  In particular, we have $$\widetilde{\Psi_f} = \Psi_{{\scriptsize\begin{bmatrix}f\\0 \end{bmatrix}}}= \Psi_{{\scriptsize\begin{bmatrix}f\\f \end{bmatrix}}} = \Psi_{{\scriptsize\begin{bmatrix}f\\-f \end{bmatrix}}} = \Psi_{{\scriptsize \begin{bmatrix}-f\\f \end{bmatrix}}}.$$
\end{remark}

\begin{proposition} \label{propeqivalenciaAutandIAut}
Let $\Phi_1 =\begin{bmatrix} \rho_1 &0\\ \phi_{21} &\phi_{22} \end{bmatrix}$, $\Phi_2 =\begin{bmatrix}
\rho_2 &0\\ \varphi_{21} &\varphi_{22} \end{bmatrix}$ be involutions on $D(X,K)$. If there is $\Psi \in \Aut(D(X,K))$ $  (\Inn(D(X,K)))$ such that $\Psi\circ \Phi_1 = \Phi_2 \circ \Psi$, then there is $\eta \in \Aut(FI(X,K))$ $(\Inn(FI(X,K)))$ such that $\eta\circ\rho_1=\rho_2 \circ\eta$.
\end{proposition}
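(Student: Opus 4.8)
The plan is to read the statement off from the block–triangular shapes of all the maps involved. By~\eqref{eqDecDP} there are $\eta \in \Aut(FI(X,K))$, a central unit $g$ of $FI(X,K)$ and $D \in \Der(X,K)$ with $\Psi = \widetilde{\eta}\circ\widetilde{g}\circ\widetilde{D}$; as computed in the proof of Theorem~\ref{teoDecinvDP}, this gives
$$\Psi = \begin{bmatrix} \eta & 0 \\ \overline{\eta}\circ g\cdot\circ D & \overline{\eta}\circ g\cdot \end{bmatrix},$$
so $\Psi$ is block lower triangular (zero in the $(1,2)$-slot) with $(1,1)$-entry $\eta \in \Aut(FI(X,K))$. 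By Lemma~\ref{lemaphi12}, the anti-automorphisms $\Phi_1$ and $\Phi_2$ are also block lower triangular, with $(1,1)$-entries $\rho_1$ and $\rho_2$ respectively.

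Next I would compare the $(1,1)$-entries of the two sides of $\Psi\circ\Phi_1 = \Phi_2\circ\Psi$. Since the $2\times 2$ block matrix attached to a composition of $K$-linear endomorphisms of $D(X,K) = FI(X,K)\oplus I(X,K)$ is the product of the attached matrices acting on columns $\begin{bmatrix} f\\ i\end{bmatrix}$ (the convention of Section~\ref{SPreli1}), and a product of block lower triangular matrices is block lower triangular with $(1,1)$-entry equal to the product of the $(1,1)$-entries, the $(1,1)$-entry of $\Psi\circ\Phi_1$ is $\eta\circ\rho_1$ while that of $\Phi_2\circ\Psi$ is $\rho_2\circ\eta$. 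Hence $\Psi\circ\Phi_1 = \Phi_2\circ\Psi$ forces $\eta\circ\rho_1 = \rho_2\circ\eta$, which is precisely the claim with this $\eta\in\Aut(FI(X,K))$.

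For the inner case I would additionally invoke Remark~\ref{obsInnPsiepsitiu}: if $\Psi = \Psi_\theta$ with $\theta = \begin{bmatrix} f\\ j\end{bmatrix}\in U(D(X,K))$, then $f\in U(FI(X,K))$ and $\Psi_\theta = \widetilde{\Psi_f}\circ\widetilde{D_{-f^{-1}j}}$, whose attached matrix has $(1,1)$-entry $\Psi_f\circ\Id_{FI(X,K)} = \Psi_f\in\Inn(FI(X,K))$. Thus in this situation the $\eta$ produced above equals $\Psi_f$, and we conclude $\Psi_f\circ\rho_1 = \rho_2\circ\Psi_f$ with $\Psi_f\in\Inn(FI(X,K))$.

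I do not anticipate a real obstacle: the argument is just bookkeeping with block–triangular forms, and the two structural facts that do the work are already in place — that every automorphism of $D(X,K)$ has the block–triangular shape of~\eqref{eqDecDP} with $(1,1)$-entry in $\Aut(FI(X,K))$ (and in $\Inn(FI(X,K))$ when it is inner, by Remark~\ref{obsInnPsiepsitiu}), and that anti-automorphisms of $D(X,K)$ are block lower triangular by Lemma~\ref{lemaphi12}. The one point needing a line of care is keeping the order of composition straight, so that the $(1,1)$-entries genuinely multiply as $\eta\circ\rho_1$ on the left-hand side and $\rho_2\circ\eta$ on the right-hand side.
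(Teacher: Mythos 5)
Your proof is correct and follows essentially the same route as the paper: both arguments reduce to the observation that $\Psi$, $\Phi_1$ and $\Phi_2$ are block lower triangular with $(1,1)$-entries $\eta$, $\rho_1$, $\rho_2$, compare $(1,1)$-entries in $\Psi\circ\Phi_1=\Phi_2\circ\Psi$, and then identify $\eta=\Psi_f$ via Remark~\ref{obsInnPsiepsitiu} in the inner case. The only cosmetic difference is that you derive the triangular form of $\Psi$ from the decomposition~\eqref{eqDecDP}, whereas the paper cites \cite[Lemma~4.2]{DW} directly for it.
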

\begin{proof}
Suppose there is $\Psi \in \Aut(D(X,K))$  such that $\Psi \circ \Phi_1= \Phi_2 \circ \Psi$. By \cite[Lemma~4.2]{DW} and the paragraph that follows it, $\Psi = \begin{bmatrix}
\eta &0\\
\psi_{21} &\psi_{22}
\end{bmatrix}$, where $\eta  \in \Aut(FI(X,K))$. Then
\begin{equation}\label{eqpequivalencia}
\begin{bmatrix}
\eta\circ\rho_1 &0\\
\psi_{21}\circ \rho_1 + \psi_{22}\circ \phi_{21}&\psi_{22}\circ\phi_{22}
\end{bmatrix} = \begin{bmatrix}
\rho_2 \circ \eta &0\\
\varphi_{21}\circ \eta + \varphi_{22}\circ \psi_{21}&\varphi_{22}\circ\psi_{22}
\end{bmatrix}
\end{equation}
which implies $\eta\circ\rho_1=\rho_2 \circ\eta$.

If $\Psi =\Psi_{{\scriptsize\begin{bmatrix}f\\j \end{bmatrix}}} \in \Inn(D(X,K))$, then $\Psi = \widetilde{\Psi_f} \circ \widetilde{D_{-f^{-1}j}} = \begin{bmatrix}
\Psi_f &0\\
\overline{\Psi_f} \circ D_{-f^{-1}j}  &\overline{\Psi_f}
\end{bmatrix}$, by Remark~\ref{obsInnPsiepsitiu}. Therefore, $\eta = \Psi_f \in \Inn(FI(X,K))$.
\end{proof}

\begin{corollary}  \label{coroinduzesinal}
Let $\Phi_1$ and $\Phi_2$ be involutions on $D(X,K)$ such that $\Phi_i =\Psi_{\theta_i} \circ \widetilde{\rho_{\lambda_i}} \circ \widetilde{k_i\delta}$ where $\theta_i \in U(D(X,K))$, $\lambda_i$ is an involution on $X$, and $k_i \in \{-1,1\}$ for $i=1,2$. If there is $\Psi \in \Inn(D(X,K))$ such that $\Psi \circ \Phi_1 =\Phi_2 \circ \Psi$, then $\lambda_1 = \lambda_2$ $(=\lambda_{\Phi_1}= \lambda_{\Phi_2})$ and $k_1=k_2$.
\end{corollary}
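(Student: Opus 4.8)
The plan is to read off the two conclusions by evaluating the intertwining identity $\Psi\circ\Phi_1=\Phi_2\circ\Psi$ at two well-chosen elements of $D(X,K)$: a primitive idempotent $\begin{bmatrix}e_x\\0\end{bmatrix}$ will give $\lambda_1=\lambda_2$, and the central element $\begin{bmatrix}0\\\delta\end{bmatrix}=c_{0,1}$ will give $k_1=k_2$. Throughout I use that automorphisms, anti-automorphisms and in particular inner automorphisms of a ring all preserve conjugacy classes of idempotents; for an anti-automorphism $\Phi$ this is because $a=ubu^{-1}$ forces $\Phi(a)=\Phi(u)^{-1}\Phi(b)\Phi(u)$.

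First I would record that $\lambda_i=\lambda_{\Phi_i}$. Applying $\Phi_i=\Psi_{\theta_i}\circ\widetilde{\rho_{\lambda_i}}\circ\widetilde{k_i\delta}$ to $\begin{bmatrix}e_x\\0\end{bmatrix}$, the factor $\widetilde{k_i\delta}$ fixes it, $\widetilde{\rho_{\lambda_i}}$ turns it into $\begin{bmatrix}e_{\lambda_i(x)}\\0\end{bmatrix}$, and $\Psi_{\theta_i}$ then conjugates; hence $\Phi_i\left(\begin{bmatrix}e_x\\0\end{bmatrix}\right)$ is conjugate to $\begin{bmatrix}e_{\lambda_i(x)}\\0\end{bmatrix}$ in $D(X,K)$, and the uniqueness in part~(3) of \cite[Proposition~5]{DW} yields $\lambda_{\Phi_i}=\lambda_i$. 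Now fix $x\in X$ and evaluate $\Psi\circ\Phi_1=\Phi_2\circ\Psi$ at $\begin{bmatrix}e_x\\0\end{bmatrix}$. The left-hand side is conjugate to $\Psi\left(\begin{bmatrix}e_{\lambda_1(x)}\\0\end{bmatrix}\right)$, hence, as $\Psi$ is inner, to $\begin{bmatrix}e_{\lambda_1(x)}\\0\end{bmatrix}$; the right-hand side is conjugate to $\Phi_2\left(\begin{bmatrix}e_x\\0\end{bmatrix}\right)$, hence to $\begin{bmatrix}e_{\lambda_2(x)}\\0\end{bmatrix}$. Thus $\begin{bmatrix}e_{\lambda_1(x)}\\0\end{bmatrix}$ and $\begin{bmatrix}e_{\lambda_2(x)}\\0\end{bmatrix}$ are conjugate in $D(X,K)$, and the uniqueness in part~(3) of \cite[Proposition~5]{DW} forces $\lambda_1(x)=\lambda_2(x)$ for all $x$, that is, $\lambda_1=\lambda_2$. (Alternatively, one could first extract $\eta\circ\rho_1=\rho_2\circ\eta$ with $\eta$ inner via Proposition~\ref{propeqivalenciaAutandIAut} and argue at the level of $FI(X,K)$.)

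For $k_1=k_2$ I would use that $\begin{bmatrix}0\\\delta\end{bmatrix}=c_{0,1}\in Z(D(X,K))$ by Proposition~\ref{propCenDXKconnected} --- this is where connectedness of $X$ is used --- so $\Psi$, being inner, fixes $\begin{bmatrix}0\\\delta\end{bmatrix}$. Exactly as in the proof of Corollary~\ref{coroallcomparableAntiauttDP}, one computes $\Phi_i\left(\begin{bmatrix}0\\\delta\end{bmatrix}\right)=\left(\Psi_{\theta_i}\circ\widetilde{\rho_{\lambda_i}}\circ\widetilde{k_i\delta}\right)\left(\begin{bmatrix}0\\\delta\end{bmatrix}\right)=\begin{bmatrix}0\\k_i\delta\end{bmatrix}$, using that $\overline{\rho_{\lambda_i}}$ is $K$-linear and fixes $\delta$ and that $\begin{bmatrix}0\\k_i\delta\end{bmatrix}$ is central. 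Evaluating $\Psi\circ\Phi_1=\Phi_2\circ\Psi$ at $\begin{bmatrix}0\\\delta\end{bmatrix}$ then gives $\begin{bmatrix}0\\k_1\delta\end{bmatrix}=\Psi\left(\Phi_1\left(\begin{bmatrix}0\\\delta\end{bmatrix}\right)\right)=\Phi_2\left(\Psi\left(\begin{bmatrix}0\\\delta\end{bmatrix}\right)\right)=\Phi_2\left(\begin{bmatrix}0\\\delta\end{bmatrix}\right)=\begin{bmatrix}0\\k_2\delta\end{bmatrix}$, whence $k_1=k_2$.

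The argument is short and the computations are routine; the only points deserving care are the bookkeeping that anti-automorphisms preserve conjugacy classes of idempotents (so that the idempotent step runs symmetrically through $\Phi_1$ and $\Phi_2$) and the identity $\Phi_i\left(\begin{bmatrix}0\\\delta\end{bmatrix}\right)=\begin{bmatrix}0\\k_i\delta\end{bmatrix}$, both of which are already essentially established earlier in the paper. I do not foresee a genuine obstacle here.
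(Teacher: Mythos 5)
Your proof is correct, but it takes a genuinely different route from the paper's in both halves. For $\lambda_1=\lambda_2$, the paper descends to $FI(X,K)$: it applies Proposition~\ref{propeqivalenciaAutandIAut} to extract an inner $\eta$ with $\eta\circ\rho_1=\rho_2\circ\eta$, rewrites this as $\rho_1=\Psi_u\circ\rho_2$ via Proposition~\ref{propAutInner}~(iv), and then compares conjugacy classes of the idempotents $\rho_i(e_x)$ in $FI(X,K)$ using \cite[Theorem~3]{BFS12} and Theorem~\ref{teoDecinvDP}; you instead stay entirely in $D(X,K)$ and compare conjugacy classes of $\begin{bmatrix}e_{\lambda_1(x)}\\0 \end{bmatrix}$ and $\begin{bmatrix}e_{\lambda_2(x)}\\0 \end{bmatrix}$ via the uniqueness in \cite[Proposition~5]{DW}, exactly as in the proof of Theorem~\ref{teoantiisoDP} --- this is self-contained and avoids Lemma~\ref{lemaphi12}/Proposition~\ref{propeqivalenciaAutandIAut} altogether. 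For $k_1=k_2$, the paper again uses Proposition~\ref{propAutInner}~(iv) to write $\Phi_2=\Psi_1\circ\Phi_1$, cancels the $\widetilde{\rho_{\lambda_1}}$ factors using \eqref{comutaantiautparticular} to arrive at $\Psi_{\theta_2}^{-1}\circ\Psi_1\circ\Psi_{\theta_1}=\widetilde{k_1k_2\delta}$, and then invokes the uniqueness of the decomposition in \cite[Theorem~4.3 and Proposition~7]{DW} to force $k_1k_2=1$; your evaluation at the central element $c_{0,1}$ is more elementary and reuses a computation already carried out in Corollary~\ref{coroallcomparableAntiauttDP}. One small correction: you do not actually need connectedness (nor Proposition~\ref{propCenDXKconnected}) for the centrality of $c_{0,1}$ and $c_{0,k_i}$ --- this already follows from \eqref{eqcentroDXK}, since $\delta\in Z(FI(X,K))$ for any poset --- so your argument is, if anything, slightly more general than you claim.
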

\begin{proof}
Firstly, for each $x \in X$ we have
$$\Phi_i \left(\begin{bmatrix}e_x\\ 0 \end{bmatrix}\right) = (\Psi_{\theta_i} \circ \widetilde{\rho_{\lambda_i}}) \left(\begin{bmatrix}e_x\\ 0 \end{bmatrix}\right)=\Psi_{\theta_i} \left(\begin{bmatrix} \rho_{\lambda_i}(e_x)\\ 0 \end{bmatrix}\right)= \theta_i\begin{bmatrix} e_{\lambda_i(x)}\\ 0 \end{bmatrix}\theta_i^{-1},$$
thus $\lambda_{\Phi_i}=\lambda_i$, $i=1,2$.

Suppose there is $\Psi \in \Inn(D(X,K))$ such that $\Psi \circ \Phi_1= \Phi_2 \circ \Psi$. By Proposition~\ref{propeqivalenciaAutandIAut} there is $\eta \in \Inn(FI(X,K))$ such that $\eta \circ \rho_1 = \rho_2 \circ \eta$, where $\Phi_1 =\begin{bmatrix} \rho_1 &0\\ \phi_{21} &\phi_{22} \end{bmatrix}$ and $\Phi_2 =\begin{bmatrix} \rho_2 &0\\ \varphi_{21} &\varphi_{22} \end{bmatrix}$. Thus, by Proposition~\ref{propAutInner} (iv), there is $u \in U(FI(X,K))$ such that $\rho_1 = \Psi_u \circ \rho_2$. Now, by \cite[Theorem~3]{BFS12}, $\rho_i(e_x)$ is conjugate to $e_{\lambda_{\rho_i}(x)}$ for each $x \in X$, $i=1,2$. Let $g_x \in U(FI(X,K))$ such that $\rho_2(e_x)= g_xe_{\lambda_{\rho_2}(x)} g_x^{-1}$. Then
$$\rho_1(e_x) = (\Psi_u \circ \rho_2)(e_x) = ug_xe_{\lambda_{\rho_2}(x)} g_x^{-1}u^{-1}$$
which implies $\lambda_{\rho_1}(x) = \lambda_{\rho_2}(x)$, for each $x \in X$. Therefore, $\lambda_{\Phi_1}=\lambda_{\rho_1}=\lambda_{\rho_2}=\lambda_{\Phi_2}$, by Theorem~\ref{teoDecinvDP}.

To show that $k_1=k_2$, observe that $\widetilde{k_i\delta}= (\widetilde{k_i\delta})^{-1}$, since $k_i^2=1$, $i=1,2$. By Proposition~\ref{propAutInner} (iv), there is $\Psi_1 \in \Inn(D(X,K))$ such that $\Phi_2 = \Psi_1\circ \Phi_1$. Thus, by (\ref{comutaantiautparticular}),
\begin{align*}
\Psi_{\theta_2} \circ \widetilde{\rho_{\lambda_1}} \circ \widetilde{k_2\delta} = \Psi_1\circ \Psi_{\theta_1} \circ \widetilde{\rho_{\lambda_1}} \circ \widetilde{k_1\delta} &\Rightarrow  \Psi_{\theta_2}  \circ \widetilde{k_2\delta} \circ \widetilde{\rho_{\lambda_1}}= \Psi_1\circ \Psi_{\theta_1}  \circ \widetilde{k_1\delta}\circ \widetilde{\rho_{\lambda_1}} \\&\Rightarrow \Psi_{\theta_2}^{-1}  \circ \Psi_1 \circ \Psi_{\theta_1}  =  \widetilde{k_2\delta} \circ \widetilde{k_1\delta} = \widetilde{k_2k_1\delta}.
\end{align*}
By \cite[Theorem~4.3 and Proposition~7]{DW}, $\widetilde{k_2k_1\delta} = \Id_{D(X,K)}$ and, therefore, $k_1 =k_2$.
\end{proof}

\begin{corollary} \label{coroSinal}
Let $\Phi$ be an involution on $D(X,K)$ such that $\Phi=  \Psi_{\theta_1} \circ \widetilde{\rho_{\lambda_1}} \circ \widetilde{k_1\delta} =  \Psi_{\theta_2} \circ \widetilde{\rho_{\lambda_2}} \circ \widetilde{k_2\delta}$ where $\theta_i \in U(D(X,K))$, $\lambda_i$ is an involution on $X$ and $k_i \in \{-1,1\}$, $i=1,2$. Then $\lambda_1 = \lambda_2 = \lambda_{\Phi}$, $k_1 = k_2$ and $\theta_1 =  c\theta_2$ for some $c \in Z(D(X,K))$.
\end{corollary}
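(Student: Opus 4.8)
The plan is to reduce the first two assertions to Corollary~\ref{coroinduzesinal} and then obtain the last one by a cancellation argument combined with Proposition~\ref{propAutInner}(i). First I would invoke Corollary~\ref{coroinduzesinal} with $\Phi_1 = \Phi_2 = \Phi$ and $\Psi = \Id_{D(X,K)}$. Since $\Id_{D(X,K)} \in \Inn(D(X,K))$ and trivially $\Id_{D(X,K)} \circ \Phi = \Phi \circ \Id_{D(X,K)}$, that corollary applies verbatim and yields $\lambda_1 = \lambda_2$ and $k_1 = k_2$; moreover, as recorded in the first lines of its proof, $\lambda_i = \lambda_{\Phi_i}$, so in our situation $\lambda_1 = \lambda_2 = \lambda_{\Phi}$.

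Next, writing $\lambda := \lambda_1 = \lambda_2$ and $k := k_1 = k_2$, the hypothesis becomes
$$\Psi_{\theta_1} \circ \widetilde{\rho_\lambda} \circ \widetilde{k\delta} = \Psi_{\theta_2} \circ \widetilde{\rho_\lambda} \circ \widetilde{k\delta}.$$
The maps $\widetilde{\rho_\lambda}$ and $\widetilde{k\delta}$ are bijective (the former by the paragraph preceding Theorem~\ref{teoDecinvDP}, since $\rho_\lambda$ is an anti-automorphism of $FI(X,K)$ and $\overline{\rho_\lambda}$ is bijective by Proposition~\ref{propesten}; the latter because $k \in K^*$). Composing on the right with $(\widetilde{\rho_\lambda} \circ \widetilde{k\delta})^{-1}$ therefore gives $\Psi_{\theta_1} = \Psi_{\theta_2}$ as elements of $\Aut(D(X,K))$.

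Finally I would apply Proposition~\ref{propAutInner}(i) to the algebra $A = D(X,K)$ with $a = \theta_1$, $b = \theta_2$: the equality $\Psi_{\theta_1} = \Psi_{\theta_2}$ is equivalent to $\theta_1\theta_2^{-1} \in Z(D(X,K))$. Setting $c := \theta_1\theta_2^{-1}$ then gives $\theta_1 = c\theta_2$ with $c \in Z(D(X,K))$, completing the proof. The argument is essentially bookkeeping; the only point needing attention is the legitimacy of the cancellation in the second step, which rests solely on the invertibility of $\widetilde{\rho_\lambda}$ and $\widetilde{k\delta}$ already established earlier, so no genuine obstacle is anticipated. (Alternatively, one could avoid Corollary~\ref{coroinduzesinal} and argue directly by evaluating both decompositions of $\Phi$ at the idempotents $\begin{bmatrix}e_x\\0\end{bmatrix}$ and at $\begin{bmatrix}0\\\delta\end{bmatrix}$, exactly as in the proof of that corollary, but the reduction above is shorter.)
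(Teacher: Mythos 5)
Your proposal is correct and follows essentially the same route as the paper, whose proof simply cites Corollary~\ref{coroinduzesinal} (applied here with $\Psi=\Id_{D(X,K)}$) together with Proposition~\ref{propAutInner}(i); your cancellation step making $\Psi_{\theta_1}=\Psi_{\theta_2}$ explicit is exactly the detail the paper leaves implicit.
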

\begin{proof}
It follows directly from Corollary~\ref{coroinduzesinal} and Proposition~\ref{propAutInner} (i).
\end{proof}

\begin{definition}
Let $\Phi$ be an involution on $D(X,K)$ such that $\Phi= \Psi \circ \widetilde{\rho_\lambda}\circ \widetilde{k\delta}$, where $\Psi \in \Inn(D(X,K))$, $\lambda$ is the involution on $X$ induced by $\Phi$ and $k \in \{ -1, 1\}$. We call $k$  the \emph{sign} of $\Phi$ and we denote it by $s(\Phi)$.
\end{definition}

Let $\rho$ be an anti-automorphism of $FI(X,K)$. Then $\widetilde{\rho} \circ \widetilde{-\delta}= \begin{bmatrix} \rho &0\\ 0 &-\overline{\rho}
\end{bmatrix}$ is an anti-automorphism of $D(X,K)$, since $\widetilde{-\delta} \in \Aut(D(X,K))$. Moreover, if $\rho$ is an involution, so is $\widetilde{\rho} \circ \widetilde{-\delta}$, by Corollary~\ref{coroexteninvo}.

\begin{proposition} \label{propequivalenciainner}
Let $X$ be a poset and let $\rho_1$, $\rho_2$ be involutions on $FI(X,K)$. The following statements are equivalent:
\begin{enumerate}
\item[(i)] $\rho_1$ and $\rho_2$ are equivalent (via inner automorphism);
\item[(ii)] $\widetilde{\rho_1}$ and $\widetilde{\rho_2}$ are equivalent (via inner automorphism);
\item[(iii)] $\widetilde{\rho_1} \circ \widetilde{-\delta}$ and $\widetilde{\rho_2} \circ \widetilde{-\delta}$ are equivalent (via inner automorphism).
\end{enumerate}
\end{proposition}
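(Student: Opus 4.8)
The plan is to prove the two equivalences $(i)\Leftrightarrow(ii)$ and $(i)\Leftrightarrow(iii)$, exploiting that both $\widetilde{\rho_i}$ and $\widetilde{\rho_i}\circ\widetilde{-\delta}$ are involutions on $D(X,K)$ (the former by the paragraph preceding Theorem~\ref{teoDecinvDP}, the latter by Corollary~\ref{coroexteninvo}), and that each of them is block-diagonal of the form $\begin{bmatrix}\rho_i&0\\ 0&\pm\overline{\rho_i}\end{bmatrix}$, with $(1,1)$-entry $\rho_i$. With this observation the backward implications $(ii)\Rightarrow(i)$ and $(iii)\Rightarrow(i)$ are immediate from Proposition~\ref{propeqivalenciaAutandIAut}: if some $\Psi\in\Inn(D(X,K))$ satisfies $\Psi\circ\widetilde{\rho_1}=\widetilde{\rho_2}\circ\Psi$ (resp.\ $\Psi\circ(\widetilde{\rho_1}\circ\widetilde{-\delta})=(\widetilde{\rho_2}\circ\widetilde{-\delta})\circ\Psi$), that proposition produces $\eta\in\Inn(FI(X,K))$ with $\eta\circ\rho_1=\rho_2\circ\eta$, which is exactly statement $(i)$.

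For the forward directions I would start from an $\eta=\Psi_u\in\Inn(FI(X,K))$, $u\in U(FI(X,K))$, with $\eta\circ\rho_1=\rho_2\circ\eta$, and use the \emph{same} candidate $\Psi:=\widetilde{\eta}=\widetilde{\Psi_u}$ in both cases. Two things must be checked. First, $\widetilde{\eta}$ genuinely lies in $\Inn(D(X,K))$ and not merely in $\Aut(D(X,K))$: by Remark~\ref{obsInnPsiepsitiu}, $\widetilde{\Psi_u}=\Psi_{\theta}$ where $\theta$ is the column with entries $u$ and $0$. Second, the tilde/bar formalism is compatible with composition across the automorphism/anti-automorphism divide: reading off the block-diagonal forms and invoking Remark~\ref{obsCompoExten} (hence Proposition~\ref{propesten}) gives $\widetilde{\eta}\circ\widetilde{\rho_1}=\widetilde{\eta\circ\rho_1}$ and $\widetilde{\rho_2}\circ\widetilde{\eta}=\widetilde{\rho_2\circ\eta}$, so $\eta\circ\rho_1=\rho_2\circ\eta$ forces $\Psi\circ\widetilde{\rho_1}=\widetilde{\rho_2}\circ\Psi$, proving $(i)\Rightarrow(ii)$. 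For $(i)\Rightarrow(iii)$ I would additionally use $\widetilde{\eta}\circ\widetilde{-\delta}=\widetilde{-\delta}\circ\widetilde{\eta}$ (from \eqref{comutaautparticular} with $k=-1$) to compute $\Psi\circ(\widetilde{\rho_1}\circ\widetilde{-\delta})=\widetilde{\eta}\circ\widetilde{\rho_1}\circ\widetilde{-\delta}=\widetilde{\rho_2}\circ\widetilde{\eta}\circ\widetilde{-\delta}=\widetilde{\rho_2}\circ\widetilde{-\delta}\circ\widetilde{\eta}=(\widetilde{\rho_2}\circ\widetilde{-\delta})\circ\Psi$.

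I do not expect a serious obstacle here, since the structural content is already encapsulated in Proposition~\ref{propeqivalenciaAutandIAut}; the remaining work is bookkeeping with the tilde and bar extensions. The only mild subtleties are (a) making sure $\widetilde{\eta}$ is recorded as \emph{inner} on $D(X,K)$, which Remark~\ref{obsInnPsiepsitiu} handles, and (b) applying the commutation identities \eqref{comutaaut}--\eqref{comutaautparticular} and Remark~\ref{obsCompoExten} to the correct, possibly orientation-reversing, maps. Once these are in place, all three statements reduce to statement $(i)$, and the proposition follows.
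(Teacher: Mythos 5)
Your proposal is correct and follows essentially the same route as the paper: the backward implications via Proposition~\ref{propeqivalenciaAutandIAut} applied to the block-diagonal forms, and the forward implications via Remark~\ref{obsCompoExten}, the commutation of $\widetilde{\eta}$ (or the composite anti-automorphism) with $\widetilde{k\delta}$, and Remark~\ref{obsInnPsiepsitiu} to record that $\widetilde{\Psi_u}$ is inner. The only cosmetic difference is that the paper treats general and inner equivalence simultaneously with an arbitrary $\eta\in\Aut(FI(X,K))$ and arbitrary $k\in K^*$, adding the innerness observation at the end.
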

\begin{proof}
Since $\widetilde{\rho_i} = \begin{bmatrix}
 \rho_i &0\\
0  &\overline{\rho_i}
\end{bmatrix}$ and $\widetilde{\rho_i} \circ \widetilde{-\delta} = \begin{bmatrix}
 \rho_i &0\\
0  &-\overline{\rho_i}
\end{bmatrix}$, $i=1,2$, we have (ii)~$\Rightarrow$~(i) and (iii)~$\Rightarrow$~(i), by Proposition~\ref{propeqivalenciaAutandIAut}.

Now, suppose there is $\eta \in \Aut(FI(X,K))$ such that $\eta \circ \rho_1 = \rho_2 \circ \eta$. By Remark~\ref{obsCompoExten} and \eqref{comutaantiautparticular},
\begin{align*}
\widetilde{\eta} \circ (\widetilde{\rho_1} \circ \widetilde{k\delta}) &= \widetilde{\eta \circ \rho_1} \circ \widetilde{k\delta} = \widetilde{k\delta}\circ\widetilde{\eta \circ \rho_1} = \widetilde{k\delta}\circ\widetilde{\rho_2 \circ \eta} \\&= \widetilde{k\delta}\circ\widetilde{\rho_2} \circ \widetilde{\eta} = (\widetilde{\rho_2} \circ \widetilde{k\delta}) \circ \widetilde{\eta},
\end{align*}
for any $k \in K^*$. Moreover, if $\eta \in \Inn(FI(X,K))$, then $\widetilde{\eta}\in \Inn(D(X,K))$, by Remark~\ref{obsInnPsiepsitiu}. Therefore, (i) $\Rightarrow$ (ii) and (iii).
\end{proof}

\begin{proposition} \label{propsinalcentro}
Let $X$ be a connected poset such that $\Mult(FI(X,K)) \subseteq \Inn(FI(X,K))$ and $\Der(X,K) = \IDer(X,K)$. If $\Phi$ is an involution on $D(X,K)$, then $\Phi (c_{k_1,k_2}) =  c_{k_1,s(\Phi)k_2}$ for all $k_1, k_2 \in K$.
\end{proposition}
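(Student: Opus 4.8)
The plan is to use the structural decomposition from Corollary~\ref{coroallcomparableAntiauttDP} to reduce the computation of $\Phi(c_{k_1,k_2})$ to a routine evaluation. By that corollary, since $X$ is connected with $\Mult(FI(X,K)) \subseteq \Inn(FI(X,K))$ and $\Der(X,K) = \IDer(X,K)$, we may write $\Phi = \Psi \circ \widetilde{\rho_\lambda} \circ \widetilde{k\delta}$ where $\Psi \in \Inn(D(X,K))$, $\lambda$ is the involution on $X$ induced by $\Phi$, and $k = s(\Phi) \in \{-1,1\}$. Here $c_{k_1,k_2} = \begin{bmatrix} k_1\delta \\ k_2\delta \end{bmatrix}$, and by Proposition~\ref{propCenDXKconnected} every such element lies in $Z(D(X,K))$, which is fixed pointwise by any inner automorphism.

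The first step is to apply $\widetilde{k\delta}$: since $\widetilde{k\delta} = \begin{bmatrix} \Id_{FI(X,K)} & 0 \\ 0 & k\delta\cdot \end{bmatrix}$, we get $\widetilde{k\delta}(c_{k_1,k_2}) = \begin{bmatrix} k_1\delta \\ k k_2 \delta \end{bmatrix} = c_{k_1, kk_2}$. The second step is to apply $\widetilde{\rho_\lambda} = \begin{bmatrix} \rho_\lambda & 0 \\ 0 & \overline{\rho_\lambda} \end{bmatrix}$: since $\rho_\lambda$ is an algebra anti-automorphism it fixes $\delta$, hence $\rho_\lambda(k_1\delta) = k_1\delta$, and $\overline{\rho_\lambda}$ restricted to $FI(X,K)$ equals $\rho_\lambda$, so $\overline{\rho_\lambda}(kk_2\delta) = kk_2\delta$. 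Thus $\widetilde{\rho_\lambda}(c_{k_1,kk_2}) = c_{k_1, kk_2}$. The third step is to apply $\Psi \in \Inn(D(X,K))$: as $c_{k_1,kk_2} \in Z(D(X,K))$ by Proposition~\ref{propCenDXKconnected}, it is fixed, so $\Phi(c_{k_1,k_2}) = c_{k_1, kk_2} = c_{k_1, s(\Phi)k_2}$, as claimed.

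One should note that the statement implicitly presupposes $D(X,K)$ admits an involution, so that $X$ admits an involution $\lambda$ (Corollary~\ref{coroiffinvolution}) and the decomposition of Corollary~\ref{coroallcomparableAntiauttDP} is available; this is not an obstacle but merely a standing hypothesis. There is essentially no hard part here: the only subtlety worth stating explicitly is that the sign $s(\Phi)$ is well-defined by Corollary~\ref{coroSinal}, and that it does not depend on the particular choice of decomposition $\Phi = \Psi \circ \widetilde{\rho_\lambda}\circ \widetilde{k\delta}$, so the formula $\Phi(c_{k_1,k_2}) = c_{k_1,s(\Phi)k_2}$ is unambiguous. Everything else is the straightforward three-step evaluation above.
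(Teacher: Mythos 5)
Your proposal is correct and follows essentially the same route as the paper: decompose $\Phi = \Psi \circ \widetilde{\rho_{\lambda_\Phi}} \circ \widetilde{s(\Phi)\delta}$ via Corollaries~\ref{coroallcomparableAntiauttDP} and \ref{coroSinal}, observe that $\widetilde{s(\Phi)\delta}$ sends $c_{k_1,k_2}$ to $c_{k_1,s(\Phi)k_2}$, that $\widetilde{\rho_{\lambda_\Phi}}$ fixes this element, and that $\Psi$ fixes it because it is central (Proposition~\ref{propCenDXKconnected}). Your added remarks on the well-definedness of $s(\Phi)$ are consistent with how the paper justifies the same point.
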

\begin{proof}
By Corollaries~\ref{coroallcomparableAntiauttDP} and \ref{coroSinal}, $\Phi = \Psi \circ \widetilde{\rho_{\lambda_{\Phi}}} \circ \widetilde{s(\Phi)\delta}$, for some $\Psi \in \Inn(D(X,K))$. Therefore, $$\Phi (c_{k_1,k_2}) = \Psi( \widetilde{\rho_{\lambda_{\Phi}}}(  c_{k_1,s(\phi)k_2}))= \Psi(  c_{k_1,s(\phi)k_2}) =  c_{k_1,s(\phi)k_2}.$$
\end{proof}

\begin{proposition} \label{propmaisoummenosTheta}
Let $X$ be a connected poset such that $\Mult(FI(X,K)) \subseteq \Inn(FI(X,K))$ and $\Der(X,K) = \IDer(X,K)$. Let $\Phi_1$,$\Phi_2$ be involutions on $D(X,K)$ such that $\Phi_2 = \Psi_{\theta}\circ \Phi_1$ for some $\theta \in U(D(X,K))$. Then $\Phi_1(\theta) = c_{k_0,k_1}\theta$ for some $k_0 \in \{-1,1\}$ and $k_1 \in K$. Moreover, when $\Char K \neq 2$,
\begin{enumerate}
\item[(i)] if $s(\Phi_1)=1$, then $k_1=0$ and $\Phi_1(\theta) = k_0 \theta$;
\item[(ii)] if $s(\Phi_1)= -1$, then there is $\gamma \in U(D(X,K))$ such that $\Phi_2 = \Psi_\gamma \circ \Phi_1$ and $\Phi_1(\gamma) = k_0\gamma$.
\end{enumerate}
\end{proposition}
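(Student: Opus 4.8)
The strategy mirrors Proposition~\ref{propAutInner}(iii), but must account for the fact that $D(X,K)$ is not central: by Proposition~\ref{propCenDXKconnected} its center is the two-dimensional space $\{c_{k_1,k_2}:k_1,k_2\in K\}$, so the conclusion $\Phi_1(\theta)=\pm\theta$ from the central case has to be relaxed to $\Phi_1(\theta)=c_{k_0,k_1}\theta$. First I would note that, since $\Phi_2=\Psi_\theta\circ\Phi_1$ is an involution, Proposition~\ref{propAutInner}(ii) gives $\Psi_\theta=\Psi_{\Phi_1(\theta)}$, whence Proposition~\ref{propAutInner}(i) gives $\theta\,\Phi_1(\theta)^{-1}\in Z(D(X,K))$. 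Because $\theta$ and $\Phi_1(\theta)$ are units, this central element is a unit; writing its inverse as $c_{p,q}$ with $p\neq 0$, we obtain $\Phi_1(\theta)=c_{p,q}\theta$.

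Next I would apply $\Phi_1$ to this identity. Using that $\Phi_1$ is an anti-automorphism together with Proposition~\ref{propsinalcentro} (which applies under the standing hypotheses on $X$), we get
$$\theta=\Phi_1^{2}(\theta)=\Phi_1(\theta)\,\Phi_1(c_{p,q})=c_{p,q}\,\theta\,c_{p,\,s(\Phi_1)q}=c_{p,q}\,c_{p,\,s(\Phi_1)q}\,\theta,$$
so that $c_{p,q}\,c_{p,\,s(\Phi_1)q}=1_{D(X,K)}=c_{1,0}$. A direct computation with the idealization product $\begin{bmatrix}r\\m\end{bmatrix}\begin{bmatrix}s\\n\end{bmatrix}=\begin{bmatrix}rs\\rn+ms\end{bmatrix}$ yields $c_{p,q}\,c_{p,\,s(\Phi_1)q}=c_{p^{2},\,pq(s(\Phi_1)+1)}$, hence $p^{2}=1$ and $pq(s(\Phi_1)+1)=0$. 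In a field $p^2=1$ forces $p=\pm1$, so setting $k_0=p$ and $k_1=q$ gives $\Phi_1(\theta)=c_{k_0,k_1}\theta$, the first assertion (valid for any characteristic).

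For the refinements assume $\Char K\neq 2$. If $s(\Phi_1)=1$, then $pq\cdot 2=0$ with $p\neq0$ forces $q=0$, so $k_1=0$ and $\Phi_1(\theta)=c_{k_0,0}\theta=k_0\theta$, which is (i). If $s(\Phi_1)=-1$, then $s(\Phi_1)+1=0$ imposes no condition on $q$, and I would correct $\theta$ by a central unit: put $\gamma=c_{1,\,k_0k_1/2}\,\theta$ (here $\Char K\neq2$ is used). Then $\gamma\in U(D(X,K))$ and $\gamma\theta^{-1}\in Z(D(X,K))$, so $\Psi_\gamma=\Psi_\theta$ by Proposition~\ref{propAutInner}(i), hence $\Phi_2=\Psi_\gamma\circ\Phi_1$; and since $\Phi_1$ is an anti-automorphism with $\Phi_1(c_{1,\,k_0k_1/2})=c_{1,\,-k_0k_1/2}$ (Proposition~\ref{propsinalcentro}),
$$\Phi_1(\gamma)=\Phi_1(\theta)\,\Phi_1(c_{1,\,k_0k_1/2})=c_{k_0,k_1}\,c_{1,\,-k_0k_1/2}\,\theta=c_{k_0,\,k_1/2}\,\theta=k_0\gamma,$$
which is (ii). The whole argument is essentially bookkeeping once the first paragraph is in place; the only delicate point is keeping track of the second (``module'') coordinate through the idealization multiplication and through $\Phi_1$, and observing that the sign $s(\Phi_1)$ is precisely what decides whether $k_1$ can be killed outright or only after adjusting $\theta$ by an element of the center.
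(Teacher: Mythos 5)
Your proposal is correct and follows essentially the same route as the paper's proof: obtain $\Phi_1(\theta)=c_{k_0,k_1}\theta$ from Proposition~\ref{propAutInner}(i)--(ii) and Proposition~\ref{propCenDXKconnected}, apply $\Phi_1$ again using Proposition~\ref{propsinalcentro} to force $k_0^2=1$ and $k_0k_1(1+s(\Phi_1))=0$, and in case (ii) adjust $\theta$ by a central unit. The only (immaterial) difference is your choice $\gamma=c_{1,k_0k_1/2}\theta$ versus the paper's $\gamma=c_{k_0,k_1/2}\theta$; these differ by the central scalar $k_0$ and both satisfy $\Phi_1(\gamma)=k_0\gamma$.
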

\begin{proof}
By Proposition~\ref{propAutInner} (ii), $\Psi_{\theta}= \Psi_{\Phi_1(\theta)}$ and, consequently, $\Phi_1(\theta) = c_{k_0,k_1}\theta$ for some $k_0 \in K^*$ and $k_1 \in K$, by Proposition~\ref{propAutInner} (i) and Proposition~\ref{propCenDXKconnected}. Thus, by Proposition~\ref{propsinalcentro},
\begin{align*}
\theta &= \Phi_1^2(\theta) = \Phi_1 \left( c_{k_0,k_1}\theta \right) = \Phi_1 \left( c_{k_0,k_1}\right) \Phi_1(\theta) = c_{k_0,s(\Phi_1)k_1} c_{k_0,k_1}\theta \\&= \begin{bmatrix}k_0\delta\\  s(\Phi_1) k_1\delta\end{bmatrix} \begin{bmatrix}k_0\delta\\ k_1\delta\end{bmatrix}\theta= \begin{bmatrix}(k_0)^2\delta\\ k_0 k_1 \delta + s(\Phi_1) k_0 k_1\delta\end{bmatrix} \theta.
\end{align*}
Therefore,
\begin{equation} \label{eqPropmaisoumenostheta}
\begin{bmatrix}(k_0)^2\delta\\ k_0 k_1 \delta + s(\Phi_1) k_0 k_1\delta\end{bmatrix} = \begin{bmatrix}\delta \\ 0\end{bmatrix}
\end{equation} and so $k_0  \in\{-1,1\}$. Now, suppose $\Char K\neq 2$.

(i) If $s(\Phi_1) = 1$, then $ 2 k_0 k_1\delta = 0$, by \eqref{eqPropmaisoumenostheta}. Thus, $k_1=0$ and so $\Phi_1(\theta) = k_0 \theta$.

(ii) Suppose  $s(\Phi_1)=-1$ and consider the element $\gamma = c_{k_0,k_1/2}\theta \in U(D(X,K))$. By Proposition~\ref{propAutInner} (i), $\Psi_\gamma = \Psi_\theta$ since $c_{k_0,k_1/2} \in Z(D(X,K))$, thus $\Phi_2 = \Psi_\theta \circ \Phi_1 = \Psi_\gamma \circ \Phi_1$. Moreover, by Proposition~\ref{propsinalcentro},
\begin{align*}
\Phi_1(\gamma) &= \Phi_1(c_{k_0,k_1/2}) \Phi_1(\theta) = c_{k_0,s(\phi_1)k_1/2} c_{k_0,k_1}\theta = c_{k_0,-k_1/2} c_{k_0,k_1}\theta
\\&=  \begin{bmatrix}k_0\delta \\ (-k_1/2)\delta\end{bmatrix} \begin{bmatrix}k_0\delta \\ k_1\delta\end{bmatrix} \theta =
\begin{bmatrix}k_0^2\delta \\ (k_0k_1-k_0k_1/2)\delta\end{bmatrix}\theta =\begin{bmatrix}k_0^2\delta \\ (k_0k_1/2)\delta\end{bmatrix}\theta
\\&=k_0 \begin{bmatrix}k_0\delta \\ (k_1/2)\delta\end{bmatrix}\theta
= k_0 c_{k_0,k_1/2}\theta  = k_0\gamma.
\end{align*}
\end{proof}

\subsection{Classification via inner automorphisms}\label{SclassiInner}

We first present the classification of involutions on $D(X,K)$ via inner automorphisms. We then use this classification to obtain the general classification of involutions.

From now on, $K$ is a field of characteristic different from $2$ and $X$ is a connected poset such that $\Mult(FI(X,K)) \subseteq \Inn(FI(X,K))$ and $\Der(X,K) = \IDer(X,K)$. In this case, each involution on $D(X,K)$ has the form $\Phi= \Psi \circ \widetilde{\rho_{\lambda_{\Phi}}} \circ \widetilde{s(\Phi)\delta}$ for some $\Psi \in \Inn(D(X,K))$, by Corollaries~\ref{coroallcomparableAntiauttDP} and \ref{coroSinal}.

By Corollaries~\ref{coroallcomparableAntiauttDP} and \ref{coroinduzesinal}, two involutions on $D(X,K)$ that induce different involutions on $X$ are not equivalent via inner automorphims. For that, we fix an involution $\lambda$ on $X$.

By \cite[Theorem~4.7]{BFS14}, there is a triple of disjoint subsets $(X_1, X_2, X_3)$ of $X$ with $X = X_1 \cup X_2 \cup X_3$ satisfying:

(i) $X_3=\{ x \in X : \lambda(x) = x\}$;

(ii) if $x \in X_1$ ($X_2$), then $\lambda(x) \in X_2$ ($X_1$);

(iii) if $x \in X_1$ ($X_2$) and $y \leq x$ ($x \leq y$), then $y \in X_1$ ($X_2$).

Let $\epsilon: X_3 \to K^*$ be a map and consider $u_\epsilon \in U(FI(X,K))$ defined by
$$
u_\epsilon(x,y) = \begin{cases}
0 &\text{if } x \neq y\\
1 &\text{if } x= y \in X_1\cup X_2\\
\epsilon(x) &\text{if } x= y \in X_3
\end{cases}.
$$
As seen in \cite[p.1096]{BFS14}, $\rho_\lambda(u_\epsilon) = u_\epsilon$ and $\rho_\epsilon = \Psi_{u_\epsilon} \circ \rho_\lambda$ is an involution on $FI(X,K)$ such that $\rho_\epsilon = \rho_\lambda$ when $X_3 = \emptyset$. Thus, $\rho_\epsilon(u_\epsilon)=u_\epsilon$ and so
\begin{equation} \label{eqrhoepsthetaeps1}
    \widetilde{\rho_\epsilon} (\theta_\epsilon)= \theta_\epsilon, \text{ where } \theta_\epsilon = \begin{bmatrix}u_\epsilon\\0 \end{bmatrix}.
\end{equation}
Moreover, by Remarks~\ref{obsCompoExten} and \ref{obsInnPsiepsitiu},
\begin{equation} \label{eqrhoepsthetaeps2}
    \widetilde{\rho_\epsilon}  = \Psi_{\theta_\epsilon} \circ \widetilde{\rho_\lambda}.
\end{equation}

\begin{lemma} \label{lemaiffsimetricoo}
Let $\epsilon: X_3 \to K^*$ be a map, $\theta = \begin{bmatrix}f\\i \end{bmatrix} \in U(D(X,K))$ and $k \in \{-1,1\}$. Suppose $\theta$ is $(\widetilde{\rho_{\epsilon}}\circ \widetilde{k\delta})$-symmetric. Then $\theta = \gamma (\widetilde{\rho_{\epsilon}}\circ \widetilde{k\delta})(\gamma)$ for some $\gamma \in U(D(X,K))$ if and only if $f(x,x) \in (K^*)^{2}$, for all $x \in X_3$.
\end{lemma}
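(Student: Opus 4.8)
The statement is an "if and only if" about when a $(\widetilde{\rho_\epsilon}\circ\widetilde{k\delta})$-symmetric unit $\theta=\begin{bmatrix}f\\i\end{bmatrix}$ is of the form $\gamma(\widetilde{\rho_\epsilon}\circ\widetilde{k\delta})(\gamma)$. I would attack both directions by unravelling the action of $\widetilde{\rho_\epsilon}\circ\widetilde{k\delta}$ on a matrix $\begin{bmatrix}g\\j\end{bmatrix}$, which is $\begin{bmatrix}\rho_\epsilon(g)\\k\,\overline{\rho_\epsilon}(j)\end{bmatrix}$, and on products, using that $\widetilde{\rho_\epsilon}\circ\widetilde{k\delta}$ is an anti-automorphism. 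Writing $\gamma=\begin{bmatrix}v\\w\end{bmatrix}\in U(D(X,K))$ (so $v\in U(FI(X,K))$), the equation $\theta=\gamma\,(\widetilde{\rho_\epsilon}\circ\widetilde{k\delta})(\gamma)$ unpacks into the pair of equations
\begin{align*}
f &= v\,\rho_\epsilon(v),\\
i &= v\cdot k\,\overline{\rho_\epsilon}(w) + w\,\rho_\epsilon(v).
\end{align*}
The second equation is linear in $w$ once $v$ is fixed, so the real content is in the first: $\theta$ is a "symmetrized element" iff $f$ lies in the image of $v\mapsto v\,\rho_\epsilon(v)$ on $U(FI(X,K))$, and I would isolate this as the heart of the argument.

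\textbf{Reduction to the incidence algebra.} First I would record that the hypothesis "$\theta$ is $(\widetilde{\rho_\epsilon}\circ\widetilde{k\delta})$-symmetric" forces $f$ to be $\rho_\epsilon$-symmetric: comparing first coordinates in $\theta=(\widetilde{\rho_\epsilon}\circ\widetilde{k\delta})(\theta)$ gives $f=\rho_\epsilon(f)$. For the forward direction, given $\gamma=\begin{bmatrix}v\\w\end{bmatrix}$ with $\theta=\gamma\,(\widetilde{\rho_\epsilon}\circ\widetilde{k\delta})(\gamma)$, the first-coordinate equation $f=v\rho_\epsilon(v)$ already lives entirely in $FI(X,K)$, so I can ignore $i$ and $w$ and simply evaluate at a fixed point $x\in X_3$. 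Using that $\rho_\epsilon(v)(x,x)=u_\epsilon(x,x)\,\rho_\lambda(v)(x,x)\,u_\epsilon(x,x)^{-1}=v(\lambda(x),\lambda(x))=v(x,x)$ (since $\lambda(x)=x$ on $X_3$ and $u_\epsilon$ is diagonal), evaluating $f=v\rho_\epsilon(v)$ at $(x,x)$ yields $f(x,x)=v(x,x)^2\in(K^*)^2$. This gives the "only if" direction; the key technical point is that convolution evaluated on the diagonal is just pointwise multiplication and that $\rho_\epsilon$ fixes diagonal values at $X_3$.

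\textbf{The converse: building $\gamma$.} For "if", assume $f$ is $\rho_\epsilon$-symmetric and $f(x,x)\in(K^*)^2$ for all $x\in X_3$; I must construct $\gamma$. The natural strategy is to first solve $f=v\rho_\epsilon(v)$ for some $v\in U(FI(X,K))$, then solve the linear equation $i=k\,v\,\overline{\rho_\epsilon}(w)+w\,\rho_\epsilon(v)$ for $w$. The first step is essentially a statement that a $\rho_\epsilon$-symmetric invertible element of $FI(X,K)$ whose $X_3$-diagonal entries are squares is a "norm"; I expect this to be exactly the kind of fact established (or readily adaptable) from \cite{BFS14} in their classification of involutions on $FI(X,K)$ — indeed $\rho_\epsilon=\Psi_{u_\epsilon}\circ\rho_\lambda$ and one can reduce to $\rho_\lambda$ by absorbing $u_\epsilon$, where the block structure coming from $(X_1,X_2,X_3)$ makes the construction of $v$ explicit: on the $X_1\times X_2$ and $X_2\times X_1$ blocks one copies $f$ freely into $v$, and on the $X_3$-diagonal one takes square roots. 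The second step: fix $v$ from step one and observe that the map $w\mapsto k\,v\,\overline{\rho_\epsilon}(w)+w\,\rho_\epsilon(v)$ is $K$-linear on $I(X,K)$, and since $\theta$ is symmetric one checks $i$ satisfies the compatibility condition $k\,\overline{\rho_\epsilon}(i)=i$ conjugated appropriately; then one produces $w$ by an averaging/triangular-solve argument (here $\Char K\neq 2$ is what lets one divide by $2$, and the partial order lets one solve entry-by-entry working up from the diagonal). Alternatively, and more cleanly, once $v$ is found set $\gamma_0=\begin{bmatrix}v\\0\end{bmatrix}$; then $\gamma_0(\widetilde{\rho_\epsilon}\circ\widetilde{k\delta})(\gamma_0)=\begin{bmatrix}f\\0\end{bmatrix}$, and $\theta\cdot\begin{bmatrix}f\\0\end{bmatrix}^{-1}$ is a symmetric unit of the form $\begin{bmatrix}\delta\\ \ast\end{bmatrix}$, which is a "unipotent" symmetric element — such elements are squares of symmetric elements via the binomial series (finite because $\begin{bmatrix}0\\\ast\end{bmatrix}$ is nilpotent in $D(X,K)$), giving $\theta=\gamma(\widetilde{\rho_\epsilon}\circ\widetilde{k\delta})(\gamma)$. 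The main obstacle I anticipate is precisely the first step — extracting $v$ from $f$, i.e.\ showing a $\rho_\epsilon$-symmetric invertible $f$ with square diagonal on $X_3$ is a norm — and I would either cite the corresponding lemma from \cite{BFS14} or reprove it using the $(X_1,X_2,X_3)$ decomposition; everything after that is a straightforward, if slightly tedious, manipulation inside the nilpotent ideal of $D(X,K)$.
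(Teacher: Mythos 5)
Your main line of attack coincides with the paper's: unpack $\theta=\gamma(\widetilde{\rho_\epsilon}\circ\widetilde{k\delta})(\gamma)$ into the pair $f=v\rho_\epsilon(v)$ and $i=kv\overline{\rho_\epsilon}(w)+w\rho_\epsilon(v)$, read off the ``only if'' direction from the first equation, and for the ``if'' direction build $v$ from the $(X_1,X_2,X_3)$ block structure as in \cite[Lemma~5.3]{BFS14} and then produce the $I(X,K)$-component. Your ``only if'' argument (evaluate $f=v\rho_\epsilon(v)$ on the diagonal at $x\in X_3$, using $\rho_\epsilon(v)(x,x)=v(\lambda(x),\lambda(x))=v(x,x)$) is correct and is in fact more self-contained than the paper, which simply cites \cite[Lemma~5.3]{BFS14} for this. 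One caveat on your primary route for the ``if'' direction: the step you describe as an ``averaging/triangular-solve'' for $w$ is where the paper spends almost all of its effort. It writes $w=j$ down explicitly on each of the six blocks ($0$ on $X_1\times X_1$ and $X_1\times X_3$, $i$ on $X_2\times X_2$ and $X_3\times X_2$, $i/2$ on $X_1\times X_2$, and $i(x,x)/(2\sqrt{f(x,x)})$ on the $X_3$-diagonal) and verifies the identity case by case. The operator $w\mapsto kv\overline{\rho_\epsilon}(w)+w\rho_\epsilon(v)$ is not triangular in any order-theoretic sense, because $\overline{\rho_\epsilon}$ reverses the order; it is the block structure, not the partial order, that makes the system solvable. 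This is fillable, but it is genuinely the body of the proof rather than a routine afterthought.

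More seriously, your ``cleaner alternative'' is wrong as stated. Writing $\Phi=\widetilde{\rho_\epsilon}\circ\widetilde{k\delta}$, the element $\theta\begin{bmatrix}f\\0\end{bmatrix}^{-1}=\begin{bmatrix}\delta\\ if^{-1}\end{bmatrix}$ need not be $\Phi$-symmetric, since $\theta$ and $\begin{bmatrix}f\\0\end{bmatrix}$ do not commute ($if\neq fi$ in general); and even if both factors were of the form $\gamma\Phi(\gamma)$, a product of two such elements is not again of that form, so the factorization would not assemble. The repair is to sandwich rather than multiply on one side: with $\gamma_0=\begin{bmatrix}v\\0\end{bmatrix}$, the element $\theta''=\gamma_0^{-1}\theta\,\Phi(\gamma_0)^{-1}$ is $\Phi$-symmetric (using $\Phi^2=\Id$ and $\Phi(\theta)=\theta$), has first coordinate $v^{-1}f\rho_\epsilon(v)^{-1}=\delta$, hence $\theta''=\begin{bmatrix}\delta\\ l\end{bmatrix}$ with $k\overline{\rho_\epsilon}(l)=l$; then $\mu=\begin{bmatrix}\delta\\ l/2\end{bmatrix}$ satisfies $\mu\Phi(\mu)=\theta''$, and $\theta=(\gamma_0\mu)\Phi(\gamma_0\mu)$. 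This corrected version would actually bypass the paper's six-case verification entirely, so it is worth fixing rather than discarding.
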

\begin{proof}
Suppose $f(x,x) \in (K^*)^2$ for all $x \in X_3$. Since $\theta$ is $(\widetilde{\rho_{\epsilon}}\circ \widetilde{k\delta})$-symmetric,
\begin{equation} \label{eqsimetrico}
\rho_{\epsilon}(f) =f \text{ and } k\overline{\rho_{\epsilon}}(i) =i.
 \end{equation}
For $x \leq y$ in $X$ there are six possibilities:
\begin{enumerate}[(1)]
\item Both are in $X_1$;
\item Both are in $X_2$;
\item $x \in X_1$ and $y \in X_2$;
\item $x \in X_1$  and  $y \in X_3$;
\item $x \in X_3$ and $y \in X_2$;
\item $x = y \in X_3$.
\end{enumerate}
So we define $v, j \in I(X,K)$ as follows:
$$v(x,y)=\begin{cases}
\delta_{xy} &\text{if } x , y \in X_1\\
f(x,y) &\text{if } x , y \in X_2\\
f(x,y)/2 &\text{if } x \in X_1 \text{ and } y \in X_2\\
0 &\text{if } x \in X_1 \text{ and } y \in X_3\\
f(x,y) &\text{if } x \in X_3 \text{ and } y \in X_2\\
\sqrt{f(x,x)} &\text{if } x = y \in X_3
\end{cases},$$
$$j(x,y)= \begin{cases}
0 &\text{if } x , y \in X_1\\
i(x,y) &\text{if } x , y \in X_2\\
i(x,y)/2 &\text{if } x \in X_1 \text{ and } y \in X_2\\
0 &\text{if } x \in X_1 \text{ and } y \in X_3\\
i(x,y) &\text{if } x \in X_3 \text{ and } y \in X_2\\
\dfrac{i(x,x)}{2\sqrt{f(x,x)} }&\text{if } x = y \in X_3
\end{cases}.$$
Clearly $v \in U(FI(X,K))$, and it follows from the proof of \cite[Lemma~5.3]{BFS14} that $f = v \rho_\epsilon(v)$. Consider $\gamma = \begin{bmatrix}v\\j \end{bmatrix} \in U(D(X,K))$.  We will prove that $\theta = \gamma (\widetilde{\rho_{\epsilon}}\circ \widetilde{k\delta})(\gamma)$. For that, it remains to show that
\begin{equation}\label{eqigualdades}
 i = kv\overline{\rho_{\epsilon}}(j) + j\rho_{\epsilon}(v).
\end{equation}

Let $x \leq  y \in X$. We shall verify that $i(x,y)= [kv\overline{\rho_{\epsilon}}(j) + j\rho_{\epsilon}(v)](x,y)$ for each of the six possibilities above. We have
\begin{equation} \label{eqsomatorio}
    [kv\overline{\rho_{\epsilon}}(j) + j\rho_{\epsilon}(v)](x,y) = k\sum\limits_{x \leq t \leq y}v(x,t)\overline{\rho_{\epsilon}}(j)(t,y) +  \sum\limits_{x \leq t \leq y} j(x,t)\rho_{\epsilon}(v)(t,y)
\end{equation}
and, for any $l \in I(X,K)$,
\begin{equation} \label{eqrhoepsappinL}
\overline{\rho_{\epsilon}}(l)(x,y) = u_\epsilon(x,x)l(\lambda(y),\lambda(x))u_{\epsilon}^{-1}(y,y).
\end{equation}

(1. $x,y \in X_1$) In this case,  $\lambda(x), \lambda(y) \in X_2$ and if $t \in X$ is such that $x \leq t \leq y$, then $t \in X_1$. Thus, $v(x,t) = 0$ for all $x<t\leq y$ and $j(x,t) = 0$ for all $x\leq t\leq y$. Then by \eqref{eqsimetrico}, \eqref{eqsomatorio} and \eqref{eqrhoepsappinL}, we have
\begin{align*}
[kv\overline{\rho_{\epsilon}}(j) + j\rho_{\epsilon}(v)](x,y) &= kv(x,x)\overline{\rho_{\epsilon}}(j)(x,y)  = k u_\epsilon(x,x)j(\lambda(y), \lambda(x)) u_{\epsilon}^{-1}(y,y) \\& = ku_\epsilon(x,x)i(\lambda(y), \lambda(x)) u_{\epsilon}^{-1}(y,y) = k\overline{\rho_{\epsilon}}(i)(x,y) = i(x,y).
\end{align*}

(2. $x,y \in X_2$) Now, we have $\lambda(x), \lambda(y) \in X_1$ and if  $t \in X$ is such that $x\leq t \leq y$, then $t \in X_2$ and $\lambda(t) \in X_1$. Thus, if $x\leq t < y$, then $v(\lambda(y), \lambda(t)) = 0$ and, consequently, $\rho_{\epsilon}(v)(t,y) = 0$, by \eqref{eqrhoepsappinL}. Moreover, if $x\leq t \leq y$, then $j(\lambda(y), \lambda(t)) = 0$ and, by \eqref{eqrhoepsappinL}, $\overline{\rho_{\epsilon}}(j)(t,y) = 0$. It follows from \eqref{eqsomatorio} and \eqref{eqrhoepsappinL} that
\begin{align*}
[kv\overline{\rho_{\epsilon}}(j) + j\rho_{\epsilon}(v)](x,y) &= j(x,y)\rho_{\epsilon}(v)(y,y)  = i(x,y) v(\lambda(y), \lambda(y))  = i(x,y).
\end{align*}

(3. $x\in X_1$, $y\in X_2$) In this case, $j(x,x)=0$ and $\overline{\rho_{\epsilon}}(j)(y,y)= j(\lambda(y),\lambda(y))=0$, because $\lambda(y) \in X_1$. If $x<t <y$, then $t \in X_1 \cup X_2 \cup X_3$.  If $t \in X_2$ then $\lambda(t) \in X_1$. Therefore, $\overline{\rho_{\epsilon}}(j)(t,y)= j(\lambda(y),\lambda(t))=0$ and $\rho_{\epsilon}(v)(t,y) = v(\lambda(y),\lambda(t))=0$. On the other hand, if $t \in X_1 \cup X_3$ then $v(x,t) = 0$ and $j(x,t)=0$. Thus, by \eqref{eqsimetrico}, \eqref{eqsomatorio} and \eqref{eqrhoepsappinL},
 \begin{align*}
 [kv\overline{\rho_{\epsilon}}(j) + j\rho_{\epsilon}(v)](x,y)
 &= kv(x,x)\overline{\rho_{\epsilon}}(j)(x,y) + j(x,y)\rho_{\epsilon}(v)(y,y)
 \\&= k \overline{\rho_{\epsilon}}(j)(x,y) + j(x,y)v(\lambda(y),\lambda(y))
 \\& = kj(\lambda(y),\lambda(x)) +j(x,y)= \dfrac{ ki(\lambda(y),\lambda(x))}{2}+ \dfrac{i(x,y)}{2}
 \\&= \dfrac{1}{2}[  ku_\epsilon(x,x)i(\lambda(y),\lambda(x))u_{\epsilon}^{-1}(y,y) + i(x,y)] \\&= \dfrac{1}{2}[ k\overline{\rho_{\epsilon}}(i)(x,y)   + i(x,y)] = i(x,y).
\end{align*}

(4. $x\in X_1$, $y\in X_3$) If $x\leq t \leq y$, then $t \in X_1 \cup X_3$. By definition,  $v(x,t) =0$ for $x < t \leq y$ and $j(x,t) = 0$ for $x\leq t \leq y$. By \eqref{eqsimetrico}, \eqref{eqsomatorio} and \eqref{eqrhoepsappinL},
 \begin{align*}
 [kv\overline{\rho_{\epsilon}}(j) + j\rho_{\epsilon}(v)](x,y) &= kv(x,x)\overline{\rho_{\epsilon}}(j)(x,y)=  kj(\lambda(y),\lambda(x))[\epsilon(y)]^{-1}\\&= ki(\lambda(y),\lambda(x))[\epsilon(y)]^{-1}= ku_\epsilon(x,x)i(\lambda(y),\lambda(x))u_{\epsilon}^{-1}(y,y) \\&= k\overline{\rho_{\epsilon}}(i)(x,y) = i(x,y).
 \end{align*}

(5. $x\in X_3$, $y\in X_2$) If $x\leq t \leq y$, then $t \in X_2 \cup X_3$. So $\lambda(t) \in  X_1 \cup X_3$  and $\lambda(y) \in X_1$, therefore $\overline{\rho_{\epsilon}}(j)(t,y) = 0$, by \eqref{eqrhoepsappinL}. Moreover, $\rho_{\epsilon}(v)(t,y) = 0$ for $x \leq t < y$. Thus,
 \begin{align*}
 [kv\overline{\rho_{\epsilon}}(j) + j\rho_{\epsilon}(v)](x,y) &= j(x,y)\rho_{\epsilon}(v)(y,y)=  j(x,y) v(\lambda(y),\lambda(y))= i(x,y).
 \end{align*}

(6. $x=y \in X_3$) If $k=-1$, then $i(x,x) = 0$, by \eqref{eqsimetrico} and \eqref{eqrhoepsappinL}.  Thus,
\begin{align*}
[v\overline{\rho_{\epsilon}}(j) + j\rho_{\epsilon}(v)](x,x) &= kv(x,x)j(x,x) + j(x,x)v(x,x) \\&= \begin{cases}
2v(x,x)j(x,x) &\text{if } k=1 \\
0 &\text{if } k=-1\\
\end{cases}
\\&= i(x,x).
\end{align*}

Conversely, if $\theta = \gamma (\widetilde{\rho_{\epsilon}}\circ \widetilde{k\delta})(\gamma)$  for some $\gamma = \begin{bmatrix}g\\j \end{bmatrix} \in U(D(X,K))$, then $f = g\rho_{\epsilon}(g)$ with $g \in U(FI(X,K))$. By \cite[Lemma~5.3]{BFS14}, $f(x,x) \in (K^*)^2$ for all $x \in X_3$.
\end{proof}

For each $f \in U(FI(X,K))$ $\rho_\lambda$-symmetric we define a map $\epsilon_f : X_3 \to K^*$ by $\epsilon_f(x)= f(x,x)$.

\begin{lemma}\label{lemarhoepsF}
Let $\theta = \begin{bmatrix}f\\i \end{bmatrix} \in U(D(X,K))$ and let $k \in \{-1,1\}$. If $\theta$ is ($\widetilde{\rho_\lambda} \circ \widetilde{k\delta}$)-symmetric, then  $\Phi = \Psi_\theta \circ \widetilde{\rho_\lambda} \circ \widetilde{k\delta}$ is an involution on $D(X,K)$  equivalent to $\widetilde{\rho_{\epsilon_f}} \circ \widetilde{k\delta}$, via inner automorphism.
\end{lemma}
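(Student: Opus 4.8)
The plan is to reduce the statement, via the decompositions already available, to a single application of Lemma~\ref{lemaiffsimetricoo}. Write $\Phi_0:=\widetilde{\rho_{\epsilon_f}}\circ\widetilde{k\delta}$ and $\theta_{\epsilon_f}:=\begin{bmatrix}u_{\epsilon_f}\\0\end{bmatrix}\in U(D(X,K))$, where $u_{\epsilon_f}$ and $\rho_{\epsilon_f}$ are the elements attached to the map $\epsilon_f$ as recalled just before the lemma. The goal is to show $\Phi=\Psi_w\circ\Phi_0$ for a $\Phi_0$-symmetric unit $w$ whose $(1,1)$-coordinate is a square at every point of $X_3$, then to invoke Lemma~\ref{lemaiffsimetricoo} and Proposition~\ref{propAutInner}(iv). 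First, reading coordinates off $(\widetilde{\rho_\lambda}\circ\widetilde{k\delta})(\theta)=\theta$ gives $\rho_\lambda(f)=f$ (so that $\epsilon_f$ is indeed well defined, as $f\in U(FI(X,K))$) and $k\overline{\rho_\lambda}(i)=i$. Since $\widetilde{\rho_\lambda}\circ\widetilde{k\delta}$ is an involution (it is $\widetilde{\rho_\lambda}$ for $k=1$ and $\widetilde{\rho_\lambda}\circ\widetilde{-\delta}$ for $k=-1$, both involutions by Corollary~\ref{coroexteninvo}) and $\theta$ is fixed by it, Proposition~\ref{propAutInner}(ii) shows $\Phi$ is an involution; the same reasoning, using that $\rho_{\epsilon_f}$ is an involution on $FI(X,K)$, shows $\Phi_0$ is an involution.

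Next I extract $w$. By \eqref{eqrhoepsthetaeps2}, $\widetilde{\rho_{\epsilon_f}}=\Psi_{\theta_{\epsilon_f}}\circ\widetilde{\rho_\lambda}$, hence $\Phi_0=\Psi_{\theta_{\epsilon_f}}\circ(\widetilde{\rho_\lambda}\circ\widetilde{k\delta})$, so $\widetilde{\rho_\lambda}\circ\widetilde{k\delta}=\Psi_{\theta_{\epsilon_f}^{-1}}\circ\Phi_0$ and therefore
$$\Phi=\Psi_\theta\circ(\widetilde{\rho_\lambda}\circ\widetilde{k\delta})=\Psi_\theta\circ\Psi_{\theta_{\epsilon_f}^{-1}}\circ\Phi_0=\Psi_w\circ\Phi_0,\qquad w:=\theta\,\theta_{\epsilon_f}^{-1}=\begin{bmatrix}f u_{\epsilon_f}^{-1}\\ i u_{\epsilon_f}^{-1}\end{bmatrix},$$
using the idealization inversion formula, which gives $\theta_{\epsilon_f}^{-1}=\begin{bmatrix}u_{\epsilon_f}^{-1}\\0\end{bmatrix}$.

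Now I check that $w$ is $\Phi_0$-symmetric. The clean observation is that $\theta_{\epsilon_f}$ is itself $\Phi_0$-symmetric: by \eqref{eqrhoepsthetaeps1} one has $\widetilde{\rho_{\epsilon_f}}(\theta_{\epsilon_f})=\theta_{\epsilon_f}$, and $\widetilde{k\delta}$ fixes $\theta_{\epsilon_f}$ since its lower coordinate is $0$. As $\Phi_0$ is an \emph{anti}-automorphism and $\Phi_0=\Psi_{\theta_{\epsilon_f}}\circ(\widetilde{\rho_\lambda}\circ\widetilde{k\delta})$, using that $\theta$ is $(\widetilde{\rho_\lambda}\circ\widetilde{k\delta})$-symmetric we obtain
\begin{align*}
\Phi_0(w) &= \Phi_0\!\left(\theta_{\epsilon_f}^{-1}\right)\Phi_0(\theta)=\theta_{\epsilon_f}^{-1}\,\Psi_{\theta_{\epsilon_f}}\!\left((\widetilde{\rho_\lambda}\circ\widetilde{k\delta})(\theta)\right)\\
&= \theta_{\epsilon_f}^{-1}\,\theta_{\epsilon_f}\,\theta\,\theta_{\epsilon_f}^{-1}=w.
\end{align*}
Moreover the $(1,1)$-coordinate $f u_{\epsilon_f}^{-1}$ of $w$ satisfies $(f u_{\epsilon_f}^{-1})(x,x)=f(x,x)\,\epsilon_f(x)^{-1}=1\in(K^*)^2$ for every $x\in X_3$, precisely because $\epsilon_f$ was defined by $\epsilon_f(x)=f(x,x)$. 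Hence $w$ meets the hypotheses of Lemma~\ref{lemaiffsimetricoo} with $\epsilon=\epsilon_f$ and the same $k$, so $w=\gamma\,(\widetilde{\rho_{\epsilon_f}}\circ\widetilde{k\delta})(\gamma)=\gamma\,\Phi_0(\gamma)$ for some $\gamma\in U(D(X,K))$. Consequently $\Phi=\Psi_w\circ\Phi_0=\Psi_{\gamma\Phi_0(\gamma)}\circ\Phi_0$, and Proposition~\ref{propAutInner}(iv) yields $\Psi\in\Inn(D(X,K))$ with $\Psi\circ\Phi=\Phi_0\circ\Psi$, i.e.\ $\Phi$ is equivalent to $\widetilde{\rho_{\epsilon_f}}\circ\widetilde{k\delta}$ via an inner automorphism.

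I expect the only step that needs real attention to be the $\Phi_0$-symmetry of $w$: one must use that $\Phi_0$ is anti-multiplicative (so $\Phi_0(\theta\,\theta_{\epsilon_f}^{-1})=\Phi_0(\theta_{\epsilon_f}^{-1})\Phi_0(\theta)$) and juggle the three facts that $\theta$ is $(\widetilde{\rho_\lambda}\circ\widetilde{k\delta})$-symmetric, that $\theta_{\epsilon_f}$ is $\Phi_0$-symmetric, and that $\Phi_0=\Psi_{\theta_{\epsilon_f}}\circ(\widetilde{\rho_\lambda}\circ\widetilde{k\delta})$. Everything else is bookkeeping once \eqref{eqrhoepsthetaeps1}, \eqref{eqrhoepsthetaeps2} and Lemma~\ref{lemaiffsimetricoo} are in hand; the single genuine idea — the normalization $\epsilon_f(x)=f(x,x)$ — is forced by the square condition appearing in Lemma~\ref{lemaiffsimetricoo}.
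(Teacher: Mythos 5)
Your proof is correct and follows essentially the same route as the paper: decompose $\Phi=\Psi_{\theta\theta_{\epsilon_f}^{-1}}\circ(\widetilde{\rho_{\epsilon_f}}\circ\widetilde{k\delta})$ via \eqref{eqrhoepsthetaeps2}, verify that $\theta\theta_{\epsilon_f}^{-1}$ is $(\widetilde{\rho_{\epsilon_f}}\circ\widetilde{k\delta})$-symmetric using \eqref{eqrhoepsthetaeps1}, check the square condition on the diagonal over $X_3$, and conclude with Lemma~\ref{lemaiffsimetricoo} and Proposition~\ref{propAutInner}(iv). The only cosmetic difference is that you invoke Proposition~\ref{propAutInner}(ii) where the paper cites part (iii) to see that $\Phi$ is an involution, which is equivalent.
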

\begin{proof}
Suppose $\theta$ is ($\widetilde{\rho_\lambda} \circ \widetilde{k\delta}$)-symmetric. Then $\Phi= \Psi_\theta \circ \widetilde{\rho_\lambda} \circ \widetilde{k\delta}$ is an involution on $D(X,K)$, by Proposition~\ref{propAutInner} (iii). By \eqref{eqrhoepsthetaeps2}, $\widetilde{\rho_{\epsilon_f}} = \Psi_{\theta_{\epsilon_f}}  \circ \widetilde{\rho_\lambda}$, where  $\theta_{\epsilon_f} = \begin{bmatrix} u_{\epsilon_f}\\ 0 \end{bmatrix}$. Thus
\begin{equation}\label{eqlemarhoepsF}
\Phi = \Psi_\theta \circ \widetilde{\rho_\lambda} \circ \widetilde{k\delta}  =  \Psi_\theta \circ \Psi_{\theta_{\epsilon_f}^{-1}}\circ \widetilde{\rho_{\epsilon_f}} \circ \widetilde{k\delta} =  \Psi_{\theta\theta_{\epsilon_f}^{-1}}\circ (\widetilde{\rho_{\epsilon_f}} \circ \widetilde{k\delta}).
\end{equation}
It follows from \eqref{eqrhoepsthetaeps1} and hypothesis that
\begin{align*}
(\widetilde{\rho_{\epsilon_f}} \circ \widetilde{k\delta})(\theta\theta_{\epsilon_f}^{-1}) &=
(\widetilde{\rho_{\epsilon_f}} \circ \widetilde{k\delta})(\theta_{\epsilon_f}^{-1}) (\widetilde{\rho_{\epsilon_f}} \circ \widetilde{k\delta})(\theta)
= \theta_{\epsilon_f}^{-1} (\Psi_{\theta_{\epsilon_f}} \circ \widetilde{\rho_\lambda} \circ \widetilde{k\delta})(\theta)
\\&=\theta_{\epsilon_f}^{-1}\theta_{\epsilon_f}(\widetilde{\rho_\lambda} \circ \widetilde{k\delta})(\theta)\theta_{\epsilon_f}^{-1}
=\theta\theta_{\epsilon_f}^{-1}.
\end{align*}
Moreover, for all $x \in X_3$,
\begin{align*}
(fu_{\epsilon_{f}}^{-1})(x,x) &= f(x,x)u_{\epsilon_{f}}(x,x)^{-1}= f(x,x)\epsilon_{f}(x)^{-1}\\&= f(x,x)f(x,x)^{-1} =1 \in (K^*)^2.
\end{align*}
By Lemma~\ref{lemaiffsimetricoo}, there is $\gamma \in U(D(X,K))$ such that $\theta\theta_{\epsilon_f}^{-1} = \gamma(\widetilde{\rho_{\epsilon_f}} \circ \widetilde{k\delta})(\gamma)$. Thus, by \eqref{eqlemarhoepsF} and  Proposition~\ref{propAutInner} (iv),  $\Phi$ and $\widetilde{\rho_{\epsilon_f}}\circ \widetilde{k\delta}$ are equivalent via inner automorphism.
\end{proof}

In order to classify  the involutions on $D(X,K)$ that induce $\lambda$ on $X$ via inner automorphisms, we consider two cases: when the poset $X$ has no fixed points with respect to $\lambda$ ($X_3= \emptyset$) and when $X$ has at least one fixed point ($X_3 \neq \emptyset)$.

\subsubsection{The case $X_3 = \emptyset$} \label{SEmpty}

In this subsection we suppose  $\lambda(x)\neq x$ for all $x \in X$. Consider $w \in FI(X,K)$ given by
$$w(x,y) = \begin{cases}
0 &\text{if } x\neq y\\
1 &\text{if } x = y \in X_1\\
-1 &\text{if } x = y \in X_2\\
\end{cases}.$$
Note that $w \in U(FI(X,K))$ with $w^{-1}= w$, and $\rho_\lambda(w)=-w$. Thus, $\sigma_\lambda = \Psi_w \circ \rho_\lambda$ is an involution on $FI(X,K)$, by Proposition~\ref{propAutInner} (iii). Let $\omega = \begin{bmatrix}w\\0 \end{bmatrix} \in U(D(X,K))$. By Remark~\ref{obsInnPsiepsitiu}, $\Psi_\omega = \widetilde{\Psi_w}$ and so $\widetilde{\sigma_\lambda}= \Psi_\omega\circ \widetilde{\rho_\lambda}$, by Remark~\ref{obsCompoExten}. Therefore, $\widetilde{\sigma_\lambda}$ and $\widetilde{\sigma_\lambda} \circ \widetilde{-\delta}$ are involutions on $D(X,K)$ that induce $\lambda$ on $X$, by  Corollary~\ref{coroSinal}. Also note that $\omega^{-1}=\omega$ and $\widetilde{\sigma_\lambda}(\omega) = - \omega$.

For $l \in I(X,K)$, we have
\begin{equation} \label{eqsiglamappinF}
    \overline{\sigma_\lambda}(l)(x,y) = \begin{cases}
-l(\lambda(y),\lambda(x)) &\text{if } x< y \text{ and } x \in X_1, y \in X_2 \\
l(\lambda(y),\lambda(x)) &\text{otherwise} \\
\end{cases}.
\end{equation}

\begin{lemma} \label{lemsigmalambdasimetrico}
Let $\Phi$ be an involution on $D(X,K)$ such that $\Phi = \Psi_\theta \circ \widetilde{\rho_\lambda} \circ \widetilde{s(\Phi)\delta}$ and $(\widetilde{\rho_\lambda} \circ \widetilde{s(\Phi)\delta})(\theta) = -\theta$. Then there exists  $\theta' \in U(D(X,K))$ such that $\Phi =  \Psi_{\theta'} \circ \widetilde{\sigma_\lambda} \circ \widetilde{s(\Phi)\delta}$ and $(\widetilde{\sigma_\lambda} \circ \widetilde{s(\Phi)\delta})(\theta')=\theta'$.
\end{lemma}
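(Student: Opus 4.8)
The idea is to convert the decomposition of $\Phi$ through $\widetilde{\rho_\lambda}$ into one through $\widetilde{\sigma_\lambda}$ by absorbing the unit $\omega=\begin{bmatrix}w\\0\end{bmatrix}$, and to show that the natural candidate $\theta'=\theta\omega$ works.

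First I would record the identity $\widetilde{\rho_\lambda}=\Psi_\omega\circ\widetilde{\sigma_\lambda}$. Indeed, from $\widetilde{\sigma_\lambda}=\Psi_\omega\circ\widetilde{\rho_\lambda}$ (established just before the lemma) and $\omega^{-1}=\omega$ (so $\Psi_\omega^{-1}=\Psi_\omega$), composing on the left with $\Psi_\omega$ gives the claim. Substituting this into the hypothesis and using $\Psi_a\circ\Psi_b=\Psi_{ab}$,
$$\Phi=\Psi_\theta\circ\widetilde{\rho_\lambda}\circ\widetilde{s(\Phi)\delta}=\Psi_\theta\circ\Psi_\omega\circ\widetilde{\sigma_\lambda}\circ\widetilde{s(\Phi)\delta}=\Psi_{\theta\omega}\circ\widetilde{\sigma_\lambda}\circ\widetilde{s(\Phi)\delta},$$
so $\theta'=\theta\omega\in U(D(X,K))$ yields $\Phi=\Psi_{\theta'}\circ\widetilde{\sigma_\lambda}\circ\widetilde{s(\Phi)\delta}$, which is the first assertion.

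It remains to check $(\widetilde{\sigma_\lambda}\circ\widetilde{s(\Phi)\delta})(\theta')=\theta'$. Write $\Theta=\widetilde{\sigma_\lambda}\circ\widetilde{s(\Phi)\delta}$, which is an anti-automorphism of $D(X,K)$, hence $\Theta(\theta\omega)=\Theta(\omega)\,\Theta(\theta)$. For $\Theta(\omega)$: the automorphism $\widetilde{s(\Phi)\delta}$ fixes $\omega$ (its lower component is $0$), and $\widetilde{\sigma_\lambda}(\omega)=-\omega$ was noted before the lemma, so $\Theta(\omega)=-\omega$. For $\Theta(\theta)$: using again $\widetilde{\sigma_\lambda}=\Psi_\omega\circ\widetilde{\rho_\lambda}$ we get $\Theta=\Psi_\omega\circ(\widetilde{\rho_\lambda}\circ\widetilde{s(\Phi)\delta})$, and the hypothesis $(\widetilde{\rho_\lambda}\circ\widetilde{s(\Phi)\delta})(\theta)=-\theta$ gives $\Theta(\theta)=\Psi_\omega(-\theta)=-\omega\theta\omega^{-1}=-\omega\theta\omega$. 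Combining, and using $\omega^2=1_{D(X,K)}$ (which holds since $w^2=\delta$),
$$\Theta(\theta')=\Theta(\theta\omega)=(-\omega)(-\omega\theta\omega)=\omega^2\theta\omega=\theta\omega=\theta',$$
as required.

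The argument is essentially bookkeeping; the only points requiring care are to treat $\Theta$ as an \emph{anti}-automorphism (so that it reverses the product $\theta\omega$) and to keep straight the three facts $\omega^{-1}=\omega$, $\omega^2=1_{D(X,K)}$, and $\widetilde{\sigma_\lambda}(\omega)=-\omega$ recorded immediately before the statement. No genuine obstacle is expected.
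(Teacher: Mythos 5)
Your proposal is correct and follows essentially the same route as the paper: both set $\theta'=\theta\omega$, use $\Psi_\omega^{-1}=\Psi_\omega$ to rewrite $\Psi_\theta\circ\widetilde{\rho_\lambda}$ as $\Psi_{\theta\omega}\circ\widetilde{\sigma_\lambda}$, and then verify symmetry of $\theta\omega$ by expanding $(\widetilde{\sigma_\lambda}\circ\widetilde{s(\Phi)\delta})(\theta\omega)=\widetilde{\sigma_\lambda}(\omega)\cdot\Psi_\omega(-\theta)=(-\omega)(-\omega\theta\omega)=\theta\omega$. The bookkeeping (anti-multiplicativity, $\omega^2=1_{D(X,K)}$, $\widetilde{\sigma_\lambda}(\omega)=-\omega$) matches the paper's computation exactly.
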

\begin{proof}
Since $\widetilde{\sigma_\lambda} = \Psi_{\omega} \circ \widetilde{\rho_\lambda}$ and $\omega^{-1}=\omega$, we have $\Phi =  \Psi_{\theta} \circ \Psi_\omega \circ  \widetilde{\sigma_\lambda} \circ \widetilde{s(\Phi)\delta}=  \Psi_{\theta\omega} \circ  \widetilde{\sigma_\lambda} \circ \widetilde{s(\Phi)\delta}$. Moreover,
\begin{align*}
     (\widetilde{\sigma_\lambda} \circ \widetilde{s(\Phi)\delta})(\theta\omega)&= (\widetilde{\sigma_\lambda} \circ \widetilde{s(\Phi)\delta})(\omega)(\widetilde{\sigma_\lambda} \circ \widetilde{s(\Phi)\delta})(\theta)
     =  \widetilde{\sigma_\lambda}(\omega)(\Psi_\omega \circ  \widetilde{\rho_\lambda} \circ \widetilde{s(\Phi)\delta})(\theta)
     \\&=-\omega\Psi_\omega(-\theta) =-\omega\omega(-\theta)\omega =\theta\omega.
\end{align*}
\end{proof}

\begin{lemma} \label{lemasigmasimetrico}
Let $\theta \in U(D(X,K))$ and $k \in \{-1,1\}$. If $\theta$ is  $(\widetilde{\sigma_\lambda}\circ \widetilde{k\delta})$-symmetric, then there exists $\gamma \in U(D(X,K))$ such that  $\theta = \gamma (\widetilde{\sigma_\lambda}\circ \widetilde{k\delta})(\gamma)$.
\end{lemma}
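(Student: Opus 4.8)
The plan is to mimic the structure of Lemma~\ref{lemaiffsimetricoo}, but now using the involution $\widetilde{\sigma_\lambda}\circ\widetilde{k\delta}$ in place of $\widetilde{\rho_\epsilon}\circ\widetilde{k\delta}$, and exploiting the crucial simplification that $X_3=\emptyset$ in this subsection, so there are no ``diagonal'' constraints of the form $f(x,x)\in(K^*)^2$ to worry about. Write $\theta=\begin{bmatrix}f\\i\end{bmatrix}$. The hypothesis that $\theta$ is $(\widetilde{\sigma_\lambda}\circ\widetilde{k\delta})$-symmetric unpacks, via the block description $\widetilde{\sigma_\lambda}\circ\widetilde{k\delta}=\begin{bmatrix}\sigma_\lambda&0\\0&k\overline{\sigma_\lambda}\end{bmatrix}$, into the two equations $\sigma_\lambda(f)=f$ and $k\,\overline{\sigma_\lambda}(i)=i$. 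I would first handle the $FI(X,K)$-component: since $\sigma_\lambda=\Psi_w\circ\rho_\lambda$ is an involution on $FI(X,K)$ with no fixed points of $\lambda$, the classification of symmetric units for such involutions (this is exactly the content of \cite[Lemma~5.3]{BFS14} in the case $X_3=\emptyset$, where the square-class obstruction is vacuous) gives a $g\in U(FI(X,K))$ with $f=g\,\sigma_\lambda(g)$.

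The second and main step is to find the $I(X,K)$-component $j$ of $\gamma=\begin{bmatrix}g\\j\end{bmatrix}$ so that $\theta=\gamma(\widetilde{\sigma_\lambda}\circ\widetilde{k\delta})(\gamma)$. Computing the product of $2\times1$ columns under the idealization multiplication, the first coordinate is automatically $g\,\sigma_\lambda(g)=f$, and the second coordinate gives the single equation
\begin{equation*}
 i = k\,g\,\overline{\sigma_\lambda}(j) + j\,\sigma_\lambda(g).
\end{equation*}
So everything reduces to solving this equation for $j\in I(X,K)$. I would do this pointwise on pairs $x\le y$, splitting into the cases coming from the partition $X=X_1\cup X_2$ (recall $X_3=\emptyset$): (1) $x,y\in X_1$; (2) $x,y\in X_2$; (3) $x\in X_1$, $y\in X_2$. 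In cases (1) and (2) one of the two summands on the right vanishes for the relevant intermediate indices (by the ``downward/upward closed'' property (iii) of $X_1,X_2$ and the explicit form \eqref{eqsiglamappinF} of $\overline{\sigma_\lambda}$), so $j$ can be defined directly — essentially $j=0$ on $X_1\times X_1$ and $j$ determined by $i$ on $X_2\times X_2$ — and the symmetry relation $k\,\overline{\sigma_\lambda}(i)=i$ makes the two choices consistent. In the mixed case (3) both summands contribute, but, just as in case (3) of Lemma~\ref{lemaiffsimetricoo}, one sets $j(x,y)=i(x,y)/2$ (using $\Char K\neq2$) and checks, via \eqref{eqsiglamappinF} and the symmetry of $i$, that the two halves add up to $i(x,y)$.

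The main obstacle — and the place requiring genuine care rather than routine bookkeeping — is the mixed case (3): one must verify that the convolution sums $\sum_t g(x,t)\overline{\sigma_\lambda}(j)(t,y)$ and $\sum_t j(x,t)\sigma_\lambda(g)(t,y)$ collapse to just the boundary terms $g(x,x)\overline{\sigma_\lambda}(j)(x,y)$ and $j(x,y)\sigma_\lambda(g)(y,y)$ respectively, which in turn forces one to track how $g$ (not merely $f$) interacts with the $X_1/X_2$ decomposition; this is where the sign twist $\rho_\lambda(w)=-w$ enters through $\sigma_\lambda$. Once the pointwise identity is checked in all three cases, assembling $\gamma=\begin{bmatrix}g\\j\end{bmatrix}$ and noting $\gamma\in U(D(X,K))$ (since $g\in U(FI(X,K))$) completes the proof. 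I would close by remarking that, unlike Lemma~\ref{lemaiffsimetricoo}, no converse or square-class hypothesis is needed here precisely because $X_3=\emptyset$.
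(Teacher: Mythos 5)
Your overall strategy is the same as the paper's: unpack symmetry into $\sigma_\lambda(f)=f$ and $k\overline{\sigma_\lambda}(i)=i$, factor the $FI(X,K)$-component, and then solve $i=k\,g\,\overline{\sigma_\lambda}(j)+j\,\sigma_\lambda(g)$ pointwise over the three cases of the $X_1\cup X_2$ decomposition, with $j=i/2$ in the mixed case. However, the first step has a concrete problem as written. You invoke \cite[Lemma~5.3]{BFS14} to produce $g$ with $f=g\,\sigma_\lambda(g)$, but that lemma concerns units symmetric under the involutions $\rho_\epsilon=\Psi_{u_\epsilon}\circ\rho_\lambda$ built from the \emph{symmetric} element $u_\epsilon$; here $f$ is symmetric under $\sigma_\lambda=\Psi_w\circ\rho_\lambda$ with $\rho_\lambda(w)=-w$, and a $\sigma_\lambda$-symmetric unit need not be $\rho_\lambda$-symmetric, so that lemma does not apply. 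The ingredient the paper uses is (the proof of) \cite[Lemma~16]{BFS11}, which treats precisely this skew-twisted involution and yields $f=v\,\sigma_\lambda(v)$ for the \emph{explicit} $v$ of Lemma~\ref{lemaiffsimetricoo} ($\delta_{xy}$ on $X_1\times X_1$, $f(x,y)$ on $X_2\times X_2$, $f(x,y)/2$ on $X_1\times X_2$).

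The second, related gap is the one you yourself call "the main obstacle" but do not resolve: an abstract $g$ satisfying $f=g\,\sigma_\lambda(g)$ is not sufficient. Your formulas for $j$ (zero on $X_1\times X_1$, $i$ on $X_2\times X_2$, $i/2$ on $X_1\times X_2$) only solve $i=k\,g\,\overline{\sigma_\lambda}(j)+j\,\sigma_\lambda(g)$ because the convolution sums collapse to the boundary terms, and that collapse uses the \emph{support} of $v$ (e.g.\ $v(x,t)=0$ for $x<t$ with $x,t\in X_1$, and $\sigma_\lambda(v)(t,y)=v(\lambda(y),\lambda(t))=0$ for $t<y$ in $X_2$ via \eqref{eqsiglamappinF}), not merely the factorization identity. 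The repair is exactly the paper's proof: take $\gamma=\begin{bmatrix}v\\j\end{bmatrix}$ with the explicit $v,j$ of Lemma~\ref{lemaiffsimetricoo}; then cases (1) and (2) reduce verbatim to the computations already done there, since $\overline{\sigma_\lambda}$ and $\overline{\rho_\lambda}$ agree off $X_1\times X_2$ by \eqref{eqsiglamappinF}, and in case (3) the extra sign from $w$ cancels against the sign in $k\overline{\sigma_\lambda}(i)=i$, giving $i(x,y)/2+i(x,y)/2=i(x,y)$. With that substitution your argument is the paper's argument.
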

\begin{proof}
 If $\theta=\begin{bmatrix}f\\i \end{bmatrix}$ is  $(\widetilde{\sigma_\lambda}\circ \widetilde{k\delta})$-symmetric, then \begin{equation}\label{eqsigmassimetrico}
 \sigma_\lambda(f) = f\text{ and  }k\overline{\sigma_\lambda}(i) = i.
 \end{equation}
Let $v \in U(FI(X,K))$ and $j\in I(X,K)$ as defined in Lemma~\ref{lemaiffsimetricoo}. As in the proof of \cite[Lemma~16]{BFS11}, $f = v\sigma_\lambda(v)$. Consider $\gamma = \begin{bmatrix}v\\j \end{bmatrix} \in U(D(X,K))$. We want to prove that $\theta = \gamma (\widetilde{\sigma_\lambda}\circ \widetilde{k\delta})(\gamma)$. For that, it remains to prove that
\begin{equation}\label{eqsigmasimetrico}
 i = kv\overline{\sigma_\lambda}(j) + j\sigma_{\lambda}(v).
\end{equation}

Let $x \leq y$ in $X$. Since $X_3= \emptyset$, we have to consider only the first three cases in the proof of Lemma~\ref{lemaiffsimetricoo}. For the first two cases the proof of \eqref{eqsigmasimetrico} is exactly the same as the proof of \eqref{eqigualdades}, since $\overline{\sigma_\lambda}(j)(x,y) = \overline{\rho_\lambda}(j)(x,y)$ and  $\sigma_\lambda(v)(x,y)=\rho_\lambda(v)(x,y)$, by \eqref{eqsiglamappinF}. Finally, suppose $x \in X_1$ and $y \in X_2$ and let $t \in X$ such that $x < t < y$. In this case,  $t \in X_1 \cup X_2$. If $t \in X_1$, then $v(x,t) = 0$ and $j(x,t)=0$.  On the other hand, if $t \in X_2$, then $\overline{\sigma_\lambda}(j)(t,y)= j(\lambda(y),\lambda(t))=0$ and  $\sigma_\lambda(v)(t,y) = v(\lambda(y),\lambda(t))=0$, by \eqref{eqsiglamappinF}. Thus, by \eqref{eqsiglamappinF} and \eqref{eqsigmassimetrico},
\begin{align} \label{eqsigmasimetricoP1}
 kv\overline{\sigma_\lambda}(j)(x,y) &= k[v(x,x)\overline{\sigma_\lambda}(j)(x,y) + v(x,y)\overline{\sigma_\lambda}(j)(y,y)] \nonumber\\
  &= k[\overline{\sigma_\lambda}(j)(x,y) + v(x,y)j(\lambda(y),\lambda(y))] \nonumber\\
  &= k\overline{\sigma_\lambda}(j)(x,y) = - kj(\lambda(y),\lambda(x)) \nonumber\\
  &= \dfrac{ -ki(\lambda(y),\lambda(x))}{2} =\dfrac {k\overline{\sigma_\lambda}(i)(x,y)}{2} \nonumber\\
  &= \dfrac{i(x,y)}{2}
\end{align}
and
 \begin{align} \label{eqsigmasimetricoP2}
  j\sigma_\lambda(v)(x,y)&= j(x,x)\sigma_\lambda(v)(x,y) + j(x,y)\sigma_\lambda(v)(y,y)\nonumber\\&=  j(x,y)v(\lambda(y),\lambda(y)) = j(x,y) = \dfrac{i(x,y)}{2}.
 \end{align}
 Therefore, $[kv\overline{\sigma_\lambda}(j) + j\sigma_\lambda(v)](x,y) =i(x,y)$, by \eqref{eqsigmasimetricoP1} and \eqref{eqsigmasimetricoP2}.
\end{proof}

\begin{theorem} \label{teoclassivazio}
Let $\lambda$ be an involution on $X$ such that $\lambda(x) \neq x$, for all $x \in X$. If $\Phi$ is an involution on $D(X,K)$ that
induces $\lambda$, then there exists $\Psi \in \Inn(D(X,K))$ such that $\Phi \circ \Psi = \Psi \circ \widetilde{\rho_\lambda}\circ \widetilde{s(\Phi)\delta}$ or $\Phi \circ \Psi = \Psi \circ \widetilde{\sigma_\lambda}\circ \widetilde{s(\Phi)\delta}$.
\end{theorem}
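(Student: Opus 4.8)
The plan is to build the theorem by feeding the standard form of $\Phi$ into the lemmas of this subsection. Since $X$ is connected with $\Mult(FI(X,K))\subseteq\Inn(FI(X,K))$ and $\Der(X,K)=\IDer(X,K)$, and $\Phi$ induces $\lambda$, Corollaries~\ref{coroallcomparableAntiauttDP} and \ref{coroSinal} let me write $\Phi=\Psi_\theta\circ\Phi_0$ with $\theta\in U(D(X,K))$ and $\Phi_0:=\widetilde{\rho_\lambda}\circ\widetilde{s(\Phi)\delta}$. First I would note that $\Phi_0$ is itself an involution on $D(X,K)$: it is $\widetilde{\rho_\lambda}$ if $s(\Phi)=1$ and $\widetilde{\rho_\lambda}\circ\widetilde{-\delta}$ if $s(\Phi)=-1$, and both are involutions (see the remark before Proposition~\ref{propequivalenciainner}, using Corollary~\ref{coroexteninvo}); moreover $s(\Phi_0)=s(\Phi)$ and $\Phi_0$ induces $\lambda$.

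The decisive reduction is to replace $\theta$ by a unit that is $\Phi_0$-symmetric or $\Phi_0$-skew-symmetric. Applying Proposition~\ref{propmaisoummenosTheta} to the pair $\Phi_1=\Phi_0$, $\Phi_2=\Phi$ (this is where $\Char K\neq 2$ is used), I obtain a unit $\vartheta\in U(D(X,K))$ — namely $\theta$ when $s(\Phi)=1$, and the element $\gamma$ furnished by part~(ii) when $s(\Phi)=-1$ — such that $\Phi=\Psi_\vartheta\circ\Phi_0$ and $\Phi_0(\vartheta)=k_0\vartheta$ for some $k_0\in\{-1,1\}$. Branching on the sign $k_0$ is precisely what produces the ``or'' in the conclusion.

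If $k_0=1$, then $\vartheta=\begin{bmatrix}f\\i\end{bmatrix}$ is $(\widetilde{\rho_\lambda}\circ\widetilde{s(\Phi)\delta})$-symmetric. Since $X_3=\emptyset$, the map $\epsilon_f$ has empty domain, so $u_{\epsilon_f}=\delta$ and $\rho_{\epsilon_f}=\rho_\lambda$; hence Lemma~\ref{lemarhoepsF} gives that $\Phi$ is equivalent, via an inner automorphism, to $\widetilde{\rho_{\epsilon_f}}\circ\widetilde{s(\Phi)\delta}=\widetilde{\rho_\lambda}\circ\widetilde{s(\Phi)\delta}$ — the first alternative. If $k_0=-1$, then $\vartheta$ is $(\widetilde{\rho_\lambda}\circ\widetilde{s(\Phi)\delta})$-skew-symmetric, so Lemma~\ref{lemsigmalambdasimetrico} produces $\theta'\in U(D(X,K))$ with $\Phi=\Psi_{\theta'}\circ\widetilde{\sigma_\lambda}\circ\widetilde{s(\Phi)\delta}$ and $\theta'$ now $(\widetilde{\sigma_\lambda}\circ\widetilde{s(\Phi)\delta})$-symmetric; Lemma~\ref{lemasigmasimetrico} then yields $\gamma\in U(D(X,K))$ with $\theta'=\gamma\,(\widetilde{\sigma_\lambda}\circ\widetilde{s(\Phi)\delta})(\gamma)$, so that $\Phi=\Psi_{\gamma(\widetilde{\sigma_\lambda}\circ\widetilde{s(\Phi)\delta})(\gamma)}\circ(\widetilde{\sigma_\lambda}\circ\widetilde{s(\Phi)\delta})$, and Proposition~\ref{propAutInner}(iv) upgrades this to: $\Phi$ is equivalent via an inner automorphism to $\widetilde{\sigma_\lambda}\circ\widetilde{s(\Phi)\delta}$ — the second alternative. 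In both cases, rewriting ``$\Psi'\circ\Phi=\varphi\circ\Psi'$'' as $\Phi\circ(\Psi')^{-1}=(\Psi')^{-1}\circ\varphi$ and setting $\Psi:=(\Psi')^{-1}\in\Inn(D(X,K))$ gives exactly the two displayed equalities.

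I expect the only genuine obstacle in the theorem itself to be handling the two sub-cases $s(\Phi)=\pm1$ uniformly before the sign split — that is, seeing that Proposition~\ref{propmaisoummenosTheta} always lets one arrange $\Phi=\Psi_\vartheta\circ\Phi_0$ with $\Phi_0(\vartheta)=\pm\vartheta$. Once such a $\vartheta$ is in hand, the rest is a mechanical chain of appeals to Lemmas~\ref{lemarhoepsF}, \ref{lemsigmalambdasimetrico}, \ref{lemasigmasimetrico} and Proposition~\ref{propAutInner}(iv); the substantive ``completing the square'' computations — that a symmetric unit is always of the form $\gamma\,\varphi(\gamma)$ — have already been carried out in Lemmas~\ref{lemaiffsimetricoo} and \ref{lemasigmasimetrico}.
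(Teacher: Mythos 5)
Your proposal is correct and follows essentially the same route as the paper: reduce to $\Phi=\Psi_\vartheta\circ\widetilde{\rho_\lambda}\circ\widetilde{s(\Phi)\delta}$ with $(\widetilde{\rho_\lambda}\circ\widetilde{s(\Phi)\delta})(\vartheta)=\pm\vartheta$ via Corollaries~\ref{coroallcomparableAntiauttDP}, \ref{coroSinal} and Proposition~\ref{propmaisoummenosTheta}, then split on the sign and invoke Lemma~\ref{lemarhoepsF} (with $\rho_{\epsilon_f}=\rho_\lambda$ since $X_3=\emptyset$) in the symmetric case and Lemmas~\ref{lemsigmalambdasimetrico}, \ref{lemasigmasimetrico} plus Proposition~\ref{propAutInner}(iv) in the skew-symmetric case. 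Your explicit handling of the two sub-cases of Proposition~\ref{propmaisoummenosTheta} (using part (ii) to trade $\theta$ for $\gamma$ when $s(\Phi)=-1$) is exactly what the paper's more terse citation of that proposition is doing implicitly.
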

\begin{proof}
By Corollaries~\ref{coroallcomparableAntiauttDP} and \ref{coroSinal} and  Proposition~\ref{propmaisoummenosTheta}, there exists $\theta \in U(D(X,K))$ such that $\Phi= \Psi_\theta \circ \widetilde{\rho_\lambda} \circ \widetilde{s(\Phi)\delta}$ and $(\widetilde{\rho_\lambda} \circ \widetilde{s(\Phi)\delta})(\theta) = \pm \theta$. If $(\widetilde{\rho_\lambda} \circ \widetilde{s(\Phi)\delta})(\theta) = \theta$, then  there exists $\Psi \in \Inn(D(X,K))$ such that $\Phi \circ \Psi = \Psi \circ \widetilde{\rho_\lambda}\circ \widetilde{s(\Phi)\delta}$, by Lemma~\ref{lemarhoepsF}, since $X_3 \neq \emptyset$.

If $(\widetilde{\rho_\lambda} \circ \widetilde{s(\Phi)\delta})(\theta) = -\theta$, then by Lemma~\ref{lemsigmalambdasimetrico}, there exists $\theta' \in U(D(X,K))$ such that $\Phi= \Psi_{\theta'} \circ \widetilde{\sigma_\lambda}\circ \widetilde{s(\Phi)\delta}$ and $(\widetilde{\sigma_\lambda}\circ \widetilde{s(\Phi)\delta})(\theta')= \theta'$. So, by Lemma~\ref{lemasigmasimetrico}, there is $\gamma \in U(D(X,K))$ such that $\theta' = \gamma(\widetilde{ \sigma_\lambda}\circ \widetilde{s(\Phi)\delta})(\gamma)$. Therefore, $\Phi \circ \Psi = \Psi \circ \widetilde{\sigma_\lambda}\circ \widetilde{s(\Phi)\delta}$ for some $\Psi \in \Inn(D(X,K))$, by Proposition~\ref{propAutInner} (iv).
\end{proof}
As stated in \cite[p.1097]{BFS14}, the results from \cite{BFS11} about the classification of involutions via inner automorphisms remain valid for $FI(X,K)$ for any connected poset $X$ for which every multiplicative automorphism of $FI(X,K)$ is inner. In particular, the involutions $\rho_\lambda$ and $\sigma_\lambda$ are not equivalent via inner automorphisms. Therefore, by Corollary~\ref{coroinduzesinal} and Proposition~\ref{propequivalenciainner}, the involutions $\widetilde{\rho_\lambda}, \widetilde{\rho_\lambda} \circ \widetilde{-\delta},\widetilde{\sigma_\lambda}$ and $\widetilde{\sigma_\lambda} \circ \widetilde{-\delta}$ on $D(X,K)$ are pairwise not equivalent via inner automorphisms.

\subsubsection{The case $X_3 \neq \emptyset$} \label{SnoEmpty}

In this subsection, $\lambda$ has at least one fixed point in $X$.

\begin{lemma} \label{lemanaoexistemenostheta}
Let $k  \in \{-1,1\}$. There is no $\theta \in U(D(X,K))$ such that $(\widetilde{\rho_\lambda}\circ \widetilde{k\delta})(\theta) =-\theta$.
\end{lemma}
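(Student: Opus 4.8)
The plan is to argue by contradiction, reducing the statement to a fact about the $FI(X,K)$-component. Suppose, for a contradiction, that there is $\theta = \begin{bmatrix} f \\ i \end{bmatrix} \in U(D(X,K))$ with $(\widetilde{\rho_\lambda}\circ \widetilde{k\delta})(\theta) = -\theta$. Since $\widetilde{k\delta}$ acts on $D(X,K)$ by $\begin{bmatrix} f \\ i \end{bmatrix} \mapsto \begin{bmatrix} f \\ ki \end{bmatrix}$ and $\widetilde{\rho_\lambda} = \begin{bmatrix} \rho_\lambda & 0 \\ 0 & \overline{\rho_\lambda} \end{bmatrix}$, the first component of the equation reads $\rho_\lambda(f) = -f$ (the second component reads $k\,\overline{\rho_\lambda}(i) = -i$ and will play no role).

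Next I would exploit invertibility on the diagonal. Because $\theta \in U(D(X,K))$ we have $f \in U(FI(X,K))$, say $f f' = \delta$ with $f' \in FI(X,K)$; evaluating the convolution product at a point of the form $(x,x)$ gives $f(x,x)f'(x,x) = 1$, so $f(x,x) \neq 0$ for every $x \in X$. Now use the standing hypothesis $X_3 \neq \emptyset$: choose $x_0 \in X$ with $\lambda(x_0) = x_0$, which also gives $\lambda^{-1}(x_0) = x_0$ since $\lambda$ is an involution. Evaluating $\rho_\lambda(f) = -f$ at $(x_0,x_0)$ and using $\rho_\lambda(f)(x_0,x_0) = f(\lambda^{-1}(x_0), \lambda^{-1}(x_0)) = f(x_0,x_0)$, we obtain $f(x_0,x_0) = -f(x_0,x_0)$, that is, $2f(x_0,x_0) = 0$.

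Since $\Char K \neq 2$, this forces $f(x_0,x_0) = 0$, contradicting $f(x_0,x_0) \neq 0$; hence no such $\theta$ exists. There is essentially no obstacle here: the only points requiring care are writing $\widetilde{\rho_\lambda} \circ \widetilde{k\delta}$ explicitly so that the first-component relation comes out exactly as $\rho_\lambda(f) = -f$, and observing that the assumption $X_3 \neq \emptyset$ is precisely what supplies a diagonal position $(x_0,x_0)$ on which $\rho_\lambda$ restricts to the identity. (This also explains, in hindsight, why the case $X_3 = \emptyset$ — where such $\theta$ do occur — required the separate treatment through $\sigma_\lambda$.)
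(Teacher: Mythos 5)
Your argument is correct and is essentially the paper's own proof: both reduce to the first-component identity $\rho_\lambda(f)=-f$, evaluate at a fixed point of $\lambda$ to get $f(x_0,x_0)=-f(x_0,x_0)$, and conclude $f(x_0,x_0)=0$ from $\Char K\neq 2$, contradicting the invertibility of $f$. The only difference is cosmetic (contradiction phrasing and the explicit verification that units of $FI(X,K)$ have nonzero diagonal).
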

\begin{proof}
If $\theta = \begin{bmatrix} f\\i \end{bmatrix} \in D(X,K)$ is such that $(\widetilde{\rho_\lambda}\circ \widetilde{k\delta})(\theta) =-\theta$, then $\rho_\lambda(f) = -f$. Let $x \in X_3$. Then $-f(x,x) = \rho_\lambda(f)(x,x)= f(\lambda(x),\lambda(x))=f(x,x)$, which implies $f(x,x)= 0$, since $\Char K \neq 2$. Thus, $f\notin U(FI(X,K))$, therefore $\theta \notin U(D(X,K))$.
\end{proof}

\begin{lemma} \label{lemaequiRhoEps}
Let $\lambda$ be an involution on $X$ with at least one fixed point in $X$. Let $\Phi$ be an involution on $D(X,K)$ that induces $\lambda$ on $X$. Then $\Phi$ is equivalent, via inner automorphism, to $\widetilde{\rho_{\epsilon_f}} \circ \widetilde{s(\Phi)\delta}$ for some $f \in U(FI(X,K))$.
\end{lemma}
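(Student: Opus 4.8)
The plan is to put $\Phi$ in the canonical form of Corollary~\ref{coroallcomparableAntiauttDP}, adjust the conjugating element into a symmetric one, and then quote Lemma~\ref{lemarhoepsF}. First I would use Corollaries~\ref{coroallcomparableAntiauttDP} and \ref{coroSinal} to write $\Phi = \Psi_\theta \circ \widetilde{\rho_\lambda}\circ\widetilde{s(\Phi)\delta}$ for some $\theta \in U(D(X,K))$. Here $\Phi_1 := \widetilde{\rho_\lambda}\circ\widetilde{s(\Phi)\delta}$ is itself an involution on $D(X,K)$ (namely $\widetilde{\rho_\lambda}$ if $s(\Phi)=1$, and $\widetilde{\rho_\lambda}\circ\widetilde{-\delta}$ if $s(\Phi)=-1$), with $s(\Phi_1) = s(\Phi)$, so that $\Phi = \Psi_\theta\circ\Phi_1$ is an identity between involutions of exactly the type handled by Proposition~\ref{propmaisoummenosTheta}.

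Next I would apply Proposition~\ref{propmaisoummenosTheta} to the pair $(\Phi_1,\Phi)$. If $s(\Phi)=1$, part (i) gives $\Phi_1(\theta) = k_0\theta$ with $k_0\in\{-1,1\}$; if $s(\Phi)=-1$, part (ii) replaces $\theta$ by some $\gamma\in U(D(X,K))$ with $\Phi = \Psi_\gamma\circ\Phi_1$ and $\Phi_1(\gamma) = k_0\gamma$. After this replacement I may assume $\Phi = \Psi_\theta\circ\widetilde{\rho_\lambda}\circ\widetilde{s(\Phi)\delta}$ with $(\widetilde{\rho_\lambda}\circ\widetilde{s(\Phi)\delta})(\theta) = k_0\theta$ for some $k_0\in\{-1,1\}$. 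The main obstacle is to eliminate the possibility $k_0=-1$; this is exactly where the hypothesis $X_3\neq\emptyset$ is used. By Lemma~\ref{lemanaoexistemenostheta} (with $k=s(\Phi)$) there is no $\theta\in U(D(X,K))$ with $(\widetilde{\rho_\lambda}\circ\widetilde{s(\Phi)\delta})(\theta)=-\theta$, so necessarily $k_0=1$, i.e.\ $\theta$ is $(\widetilde{\rho_\lambda}\circ\widetilde{s(\Phi)\delta})$-symmetric.

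Finally, writing $\theta = \begin{bmatrix} f\\i\end{bmatrix}$ with $f\in U(FI(X,K))$, the $FI(X,K)$-component of the symmetry relation $(\widetilde{\rho_\lambda}\circ\widetilde{s(\Phi)\delta})(\theta)=\theta$ reads $\rho_\lambda(f)=f$, since the factor $\widetilde{s(\Phi)\delta}$ acts trivially on that component. Hence $f$ is $\rho_\lambda$-symmetric, so the map $\epsilon_f\colon X_3\to K^*$, $\epsilon_f(x)=f(x,x)$, is well defined. Lemma~\ref{lemarhoepsF}, applied with this $\theta$ and $k=s(\Phi)$, then states precisely that $\Phi=\Psi_\theta\circ\widetilde{\rho_\lambda}\circ\widetilde{s(\Phi)\delta}$ is equivalent, via an inner automorphism of $D(X,K)$, to $\widetilde{\rho_{\epsilon_f}}\circ\widetilde{s(\Phi)\delta}$, which is the assertion. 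Aside from the appeal to Lemma~\ref{lemanaoexistemenostheta}, all steps are routine manipulations with the decomposition of involutions already established.
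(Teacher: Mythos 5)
Your proof is correct and follows essentially the same route as the paper: decompose $\Phi$ via Corollaries~\ref{coroallcomparableAntiauttDP} and \ref{coroSinal}, use Proposition~\ref{propmaisoummenosTheta} together with Lemma~\ref{lemanaoexistemenostheta} to arrange that the conjugating unit is $(\widetilde{\rho_\lambda}\circ\widetilde{s(\Phi)\delta})$-symmetric, and then invoke Lemma~\ref{lemarhoepsF}. You merely spell out the case distinction on $s(\Phi)$ and the exclusion of the skew-symmetric case in more detail than the paper does.
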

\begin{proof}
By Corolaries~\ref{coroallcomparableAntiauttDP} and \ref{coroSinal}, there exists $\theta=  \begin{bmatrix}f\\i \end{bmatrix}  \in U(D(X,K))$ such that $\Phi = \Psi_\theta \circ \widetilde{\rho_\lambda} \circ \widetilde{s(\Phi)\delta}$. Moreover, by Proposition~\ref{propmaisoummenosTheta} and Lemma~\ref{lemanaoexistemenostheta}, we can choose $\theta$ such that $(\widetilde{\rho_\lambda} \circ \widetilde{s(\Phi)\delta})(\theta) = \theta$. Therefore, by Lemma~\ref{lemarhoepsF}, $\Phi$ is equivalent to $\widetilde{\rho_{\epsilon_f}} \circ \widetilde{s(\Phi)\delta}$, via inner automorphism.
\end{proof}

It follows from lemma above that every involution $\Phi$ on $D(X,K)$ that induces $\lambda$ on $X$ is equivalent, via inner automorphism, to $\widetilde{\rho_\epsilon} \circ \widetilde{s(\Phi)\delta}$ for some map $\epsilon: X_3 \to K^*$. Therefore, to get a classification of involutions, via inner automorphisms, it suffices to discover which conditions the maps $\epsilon_1,\epsilon_2: X_3 \to K^*$ must satisfy in order that to obtain $\rho_{\epsilon_1}$ and $\rho_{\epsilon_2}$ equivalent via inner automorphism, by Corollary~\ref{coroinduzesinal} and Proposition~\ref{propequivalenciainner}.

As in \cite{BFS11}, we denote by $S_K$ the multiplicative group $K/(K^*)^2$. For each $\epsilon:X_3 \to K^*$, we define $\chi_\epsilon: X_3 \to S_K$ by $\chi_\epsilon = \pi \circ \epsilon$, where $\pi : K^* \to K/(K^*)^2$ is the canonical homomorphism.

\begin{lemma} \label{lemaEquivalenciarhoeps}
Let $\epsilon_1,\epsilon_2:X_3 \to K^*$be maps. The involutions $\rho_{\epsilon_1}$ and $\rho_{\epsilon_2}$ on $FI(X,K)$ are equivalent, via inner automorphism, if and only if there exists $g \in S_K$ such that $g \chi_{\epsilon_1}(x) = \chi_{\epsilon_2}(x)$, for all $x \in X_3$.
\end{lemma}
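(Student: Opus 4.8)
The plan is to reduce the equivalence of $\rho_{\epsilon_1}$ and $\rho_{\epsilon_2}$ to a congruence condition on the diagonals $\epsilon_1,\epsilon_2$ over the fixed-point set $X_3$, and then translate that condition into $S_K$. By Proposition~\ref{propAutInner}(iv) applied to $A=FI(X,K)$, the involutions $\rho_{\epsilon_1}$ and $\rho_{\epsilon_2}$ are equivalent via an inner automorphism if and only if $\rho_{\epsilon_1}=\Psi_{v\rho_{\epsilon_2}(v)}\circ\rho_{\epsilon_2}$ for some $v\in U(FI(X,K))$. Since $\rho_{\epsilon_i}=\Psi_{u_{\epsilon_i}}\circ\rho_\lambda$, $\rho_\lambda(u_{\epsilon_i})=u_{\epsilon_i}$ and $\rho_\lambda^2=\Id$, a direct computation gives $\rho_{\epsilon_1}\circ\rho_{\epsilon_2}=\Psi_{u_{\epsilon_1}u_{\epsilon_2}^{-1}}$, hence $\rho_{\epsilon_1}=\Psi_{u_{\epsilon_1}u_{\epsilon_2}^{-1}}\circ\rho_{\epsilon_2}$, so the condition above becomes $\Psi_{u_{\epsilon_1}u_{\epsilon_2}^{-1}}=\Psi_{v\rho_{\epsilon_2}(v)}$. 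As $X$ is connected, $FI(X,K)$ is central, i.e. $Z(FI(X,K))=K\delta$; by Proposition~\ref{propAutInner}(i), the condition is thus equivalent to the existence of $c\in K^*$ with $u_{\epsilon_1}u_{\epsilon_2}^{-1}=c\,v\rho_{\epsilon_2}(v)$.

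For the ``only if'' direction, suppose such $v$ and $c$ exist and evaluate both sides on the diagonal at $x\in X_3$. Since $\lambda(x)=x$ and $u_{\epsilon_2}$ is diagonal, one checks $\rho_{\epsilon_2}(v)(x,x)=v(x,x)$, so $(v\rho_{\epsilon_2}(v))(x,x)=v(x,x)^2$; as $(u_{\epsilon_1}u_{\epsilon_2}^{-1})(x,x)=\epsilon_1(x)\epsilon_2(x)^{-1}$, this yields $\epsilon_1(x)\epsilon_2(x)^{-1}=c\,v(x,x)^2$. Applying $\pi:K^*\to S_K$ and using that $S_K$ has exponent $2$, we obtain $g\chi_{\epsilon_1}(x)=\chi_{\epsilon_2}(x)$ for all $x\in X_3$, with $g=\pi(c)$.

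For the ``if'' direction, given $g\in S_K$ with $g\chi_{\epsilon_1}(x)=\chi_{\epsilon_2}(x)$ for all $x\in X_3$, pick $c\in K^*$ with $\pi(c)=g$ and set $h=c^{-1}u_{\epsilon_1}u_{\epsilon_2}^{-1}\in U(FI(X,K))$. Then $h$ is diagonal, equal to $1$ on $X_1\cup X_2$ and to $c^{-1}\epsilon_1(x)\epsilon_2(x)^{-1}$ on $X_3$; the hypothesis forces $h(x,x)\in(K^*)^2$ for every $x\in X_3$, and $h$ is $\rho_{\epsilon_2}$-symmetric because $\rho_\lambda$ fixes it and it commutes with $u_{\epsilon_2}$. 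Hence, by the argument used in the proof of Lemma~\ref{lemaiffsimetricoo} (that is, \cite[Lemma~5.3]{BFS14}), $h=v\rho_{\epsilon_2}(v)$ for some $v\in U(FI(X,K))$. Since $c\delta$ is central, $\Psi_{u_{\epsilon_1}u_{\epsilon_2}^{-1}}=\Psi_{h}=\Psi_{v\rho_{\epsilon_2}(v)}$, and Proposition~\ref{propAutInner}(iv) gives that $\rho_{\epsilon_1}$ and $\rho_{\epsilon_2}$ are equivalent via an inner automorphism.

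The main obstacle is the ``if'' direction, which rests on the structural fact borrowed from \cite{BFS11,BFS14}: a $\rho_{\epsilon_2}$-symmetric unit of $FI(X,K)$ has the form $v\rho_{\epsilon_2}(v)$ exactly when its diagonal values over $X_3$ are squares in $K^*$. Everything else is bookkeeping with inner automorphisms and the centrality of $FI(X,K)$.
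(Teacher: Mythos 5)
Your proof is correct and follows essentially the same route as the paper, which simply delegates to the argument of \cite[Lemma~22]{BFS11} with \cite[Lemma~5.3]{BFS14} as the key input: reduce via Proposition~\ref{propAutInner}(iv) and centrality of $FI(X,K)$ to the condition $u_{\epsilon_1}u_{\epsilon_2}^{-1}=c\,v\rho_{\epsilon_2}(v)$, then read off the diagonal over $X_3$ modulo squares. The only slip is that $h=c^{-1}u_{\epsilon_1}u_{\epsilon_2}^{-1}$ equals $c^{-1}$ (not $1$) on $X_1\cup X_2$, which is harmless since $h$ is still diagonal, constant on $X_1\cup X_2$, and hence $\rho_{\epsilon_2}$-symmetric.
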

\begin{proof}
The proof is exactly the same as the proof of \cite[Lemma~22]{BFS11}, just replacing \cite[Lemma~11]{BFS11} by \cite[Lemma~5.3]{BFS14}.
\end{proof}

Let $k \in \{-1,1\}$. By Proposition~\ref{propequivalenciainner} and Lemma~\ref{lemaEquivalenciarhoeps}, $\widetilde{\rho_{\epsilon_1}} \circ \widetilde{k\delta}$ is equivalent to $\widetilde{\rho_{\epsilon_2}} \circ \widetilde{k\delta}$ via inner automorphism, if and only if $L_g \circ \chi_{\epsilon_1} = \chi_{\epsilon_2}$ for some $g \in S_K$, where $L_g:S_K \to S_K$ is the left multiplication by $g$. It is easy to see that, for any $\chi: X_3 \to S_K$, there exists $\epsilon:X_3\to K^*$ such that $\chi = \chi_\epsilon$. Let $S_K^{X_3}$ the set of all maps from $X_3$ to $S_K$. Then,  by Lemma~\ref{lemaequiRhoEps}, the number of equivalence classes of involutions on $D(X,K)$ inducing $\lambda$, via inner automorphism, is equal to the number of equivalence classes on the set $S_K^{X_3} \times \{-1,1\}$ with the following equivalence relation: $(\chi, x),(\chi', y) \in S_K^{X_3} \times \{-1,1\}$ are equivalent if $x=y$ and there exists $g \in S_K$ such that $\chi = L_g \circ \chi'$.

By Lemma~\ref{lemaequiRhoEps}, if $\Phi$ is an involution on $D(X,K)$ that induces $\lambda$ on $X$, then there exists $\epsilon: X_3 \to K^*$ such that $\Phi$ is equivalent to $\widetilde{\rho_\epsilon} \circ \widetilde{s(\Phi)\delta}$, via inner automorphism. Thus, $\Phi$ induces the equivalence class in $S_K^{X_3} \times \{-1,1\}$ that contains $(\chi_\epsilon,s(\Phi))$.  Therefore we have the following theorem.

\begin{theorem} \label{teoclassinoempty}
Let $\lambda$ be an involution on $X$ with at least one fixed point in $X$. Let $\Phi_1$ and $\Phi_2$ be involutions on $D(X,K)$ that induce $\lambda$ in $X$. Then $\Phi_1$ and $\Phi_2$ are equivalent via inner automorphism, if and only if they induce the same equivalence class in $S_K^{X_3} \times \{-1,1\}$.
\end{theorem}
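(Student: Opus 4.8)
The plan is to read the statement off from the three ingredients assembled in the paragraph preceding it, after first recording that ``equivalent via inner automorphism'' really is an equivalence relation on the set of involutions of $D(X,K)$: reflexivity is witnessed by $\Id_{D(X,K)}$, symmetry by passing to the inverse conjugating automorphism, and transitivity by composing conjugating automorphisms --- in both cases using that $\Inn(D(X,K))$ is a group. I would state this as a one-line preliminary observation, since it is what lets us chain the reductions below.

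First I would fix, for $i=1,2$, a map $\epsilon_i : X_3 \to K^*$ with $\Phi_i$ inner-equivalent to $\widetilde{\rho_{\epsilon_i}}\circ \widetilde{s(\Phi_i)\delta}$; this is exactly Lemma~\ref{lemaequiRhoEps} (together with Corollary~\ref{coroSinal}, so that the sign occurring there is indeed $s(\Phi_i)$), and by definition $\Phi_i$ induces the class of $(\chi_{\epsilon_i}, s(\Phi_i))$ in $S_K^{X_3}\times\{-1,1\}$. For the forward implication, assume $\Psi\circ\Phi_1 = \Phi_2\circ\Psi$ with $\Psi\in\Inn(D(X,K))$. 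Corollary~\ref{coroinduzesinal} gives $s(\Phi_1)=s(\Phi_2)=:k$; transitivity then makes $\widetilde{\rho_{\epsilon_1}}\circ\widetilde{k\delta}$ and $\widetilde{\rho_{\epsilon_2}}\circ\widetilde{k\delta}$ inner-equivalent; Proposition~\ref{propequivalenciainner} transfers this to inner-equivalence of $\rho_{\epsilon_1}$ and $\rho_{\epsilon_2}$ on $FI(X,K)$; and Lemma~\ref{lemaEquivalenciarhoeps} yields $g\in S_K$ with $L_g\circ\chi_{\epsilon_1}=\chi_{\epsilon_2}$. Hence $(\chi_{\epsilon_1},k)$ and $(\chi_{\epsilon_2},k)$ lie in the same class, i.e.\ $\Phi_1$ and $\Phi_2$ induce the same class. (Taking $\Phi_1=\Phi_2$ here also shows this class does not depend on the choice of $\epsilon_i$, which is what makes ``the class induced by $\Phi$'' meaningful.)

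For the converse, assume $\Phi_1$ and $\Phi_2$ induce the same class. Then $s(\Phi_1)=s(\Phi_2)=:k$ and there is $g\in S_K$ with $\chi_{\epsilon_1}=L_g\circ\chi_{\epsilon_2}$ (equivalently $\chi_{\epsilon_2}=L_{g^{-1}}\circ\chi_{\epsilon_1}$, since $S_K$ is a group). Running the same chain backwards --- Lemma~\ref{lemaEquivalenciarhoeps}, then Proposition~\ref{propequivalenciainner} --- gives that $\widetilde{\rho_{\epsilon_1}}\circ\widetilde{k\delta}$ and $\widetilde{\rho_{\epsilon_2}}\circ\widetilde{k\delta}$ are inner-equivalent, and transitivity closes the loop $\Phi_1\sim\widetilde{\rho_{\epsilon_1}}\circ\widetilde{k\delta}\sim\widetilde{\rho_{\epsilon_2}}\circ\widetilde{k\delta}\sim\Phi_2$.

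The proof is essentially bookkeeping, since every substantive step has already been isolated into an earlier lemma; the only point deserving explicit care is that all the ``equivalent via inner automorphism'' relations may be composed, i.e.\ transitivity (and symmetry) of this relation on involutions of $D(X,K)$ --- without it the normal-form reductions cannot be combined --- and, relatedly, checking that the sign produced by Lemma~\ref{lemaequiRhoEps} coincides with $s(\Phi_i)$ as defined via Corollary~\ref{coroSinal}. I do not anticipate a genuine obstacle beyond this.
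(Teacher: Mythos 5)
Your proposal is correct and follows essentially the same route as the paper, which derives the theorem directly from the chain Lemma~\ref{lemaequiRhoEps} $\to$ Corollary~\ref{coroinduzesinal} $\to$ Proposition~\ref{propequivalenciainner} $\to$ Lemma~\ref{lemaEquivalenciarhoeps} assembled in the two paragraphs preceding the statement. Your explicit remarks on transitivity of inner-equivalence and on the well-definedness of the induced class in $S_K^{X_3}\times\{-1,1\}$ are points the paper leaves implicit, but they are handled correctly and do not change the argument.
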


The cardinality of $S_K^{X_3} \times \{-1,1\} $ is $2|S_K|^{|X_3|}$, but in each equivalence class there are $|S_K|$ elements (one for each possible left multiplication by an element of $S_K$), then we have the following corollary.

\begin{corollary} \label{corocardinality}
Let $\lambda$ be an involution on $X$ with at least one fixed point in $X$. The number of equivalence classes of involutions on $D(X,K)$ inducing $\lambda$, via inner automorphism, is equal to $2|S_K|^{|X_3|-1}$.
\end{corollary}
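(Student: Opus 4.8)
The plan is to deduce this as a purely combinatorial consequence of Theorem~\ref{teoclassinoempty} and the discussion preceding the corollary. By Lemma~\ref{lemaequiRhoEps}, every involution $\Phi$ on $D(X,K)$ inducing $\lambda$ is equivalent, via inner automorphism, to $\widetilde{\rho_\epsilon}\circ\widetilde{s(\Phi)\delta}$ for some $\epsilon\colon X_3\to K^*$, and $\Phi$ thus determines the equivalence class of $(\chi_\epsilon,s(\Phi))$ in $S_K^{X_3}\times\{-1,1\}$; conversely, every $\chi\colon X_3\to S_K$ is of the form $\chi_\epsilon$ and $\widetilde{\rho_\epsilon}\circ\widetilde{k\delta}$ is an involution inducing $\lambda$ of sign $k$. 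Together with Theorem~\ref{teoclassinoempty}, this yields a bijection between the set of equivalence classes of involutions on $D(X,K)$ inducing $\lambda$ (via inner automorphism) and the set of equivalence classes of $S_K^{X_3}\times\{-1,1\}$ under the relation $(\chi,x)\sim(\chi',y)$ iff $x=y$ and $\chi=L_g\circ\chi'$ for some $g\in S_K$. So it suffices to count the latter.

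First I would observe that this relation splits across the second coordinate: it forces $x=y$, and for a fixed sign $k\in\{-1,1\}$ the pairs $(\chi,k)$ and $(\chi',k)$ are equivalent exactly when $\chi$ and $\chi'$ lie in the same orbit of the left-multiplication action of the group $S_K$ on $S_K^{X_3}$, namely $(g\cdot\chi)(x)=g\,\chi(x)$. Hence the number of equivalence classes equals $2$ times the number of $S_K$-orbits on $S_K^{X_3}$. Next I would show this action is free: since $X_3\neq\emptyset$, fix $x_0\in X_3$; if $g\cdot\chi=\chi$ for some $g\in S_K$, then evaluating at $x_0$ gives $g\,\chi(x_0)=\chi(x_0)$, so $g$ is the identity of $S_K$. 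Therefore every orbit has exactly $|S_K|$ elements, and as $|S_K^{X_3}|=|S_K|^{|X_3|}$, the number of orbits is $|S_K|^{|X_3|}/|S_K|=|S_K|^{|X_3|-1}$. Multiplying by $2$ gives $2|S_K|^{|X_3|-1}$.

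The argument involves no real obstacle: the one point needing attention is the freeness of the action, which is precisely where the hypothesis $X_3\neq\emptyset$ enters, and — should $S_K$ be infinite — reading the displayed quotient as an identity of cardinals (equivalently, arguing directly that the orbits partition $S_K^{X_3}$ into blocks of size $|S_K|$).
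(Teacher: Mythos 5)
Your proposal is correct and follows essentially the same route as the paper, which also counts the equivalence classes on $S_K^{X_3}\times\{-1,1\}$ by noting that the total cardinality is $2|S_K|^{|X_3|}$ and that each class contains $|S_K|$ elements. Your explicit verification that the $S_K$-action is free (using a fixed point $x_0\in X_3$), which justifies the claim that every class has exactly $|S_K|$ elements, is a detail the paper leaves implicit, but it is the same argument.
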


\subsection{General classification of involutions on $D(X,K)$}\label{Sclassigeneral}

Recall we are considering a field $K$ of characteristic different from $2$ and $X$ a connected poset such that $\Mult(FI(X,K)) \subseteq \Inn(FI(X,K))$ and $\Der(X,K) = \IDer(X,K)$.

The general classification of involutions on $D(X,K)$ can be obtained naturally from the classification via inner automorphism, by the following result.

\begin{theorem}~\label{teoeqivaGeral}
The involutions $\Phi_1$ and $\Phi_2$ on $D(X,K)$ are equivalent if and only if there exist an automorphism $\alpha$ of $X$ and $k \in K^*$ such that the involutions $\Phi_1$ and $\widetilde{\alpha} \circ \widetilde{k\delta} \circ \Phi_2 \circ \widetilde{k^{-1}\delta}\circ \widetilde{\alpha^{-1}}$ are equivalent, via inner automorphism.
\end{theorem}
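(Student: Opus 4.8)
The plan is to deduce the statement from Proposition~\ref{propallcomparableAutDP} --- which says that every automorphism of $D(X,K)$ factors as $\Psi\circ\widetilde{\alpha}\circ\widetilde{k\delta}$ with $\Psi$ inner, $\alpha$ an automorphism of $X$ and $k\in K^*$ --- together with the elementary observation that $\Inn(D(X,K))$ is a normal subgroup of $\Aut(D(X,K))$, since $\Theta\circ\Psi_\theta\circ\Theta^{-1}=\Psi_{\Theta(\theta)}$ for $\Theta\in\Aut(D(X,K))$ and $\theta\in U(D(X,K))$. Write $\beta=\widetilde{\alpha}\circ\widetilde{k\delta}$; by \eqref{comutaautparticular} it equals $\widetilde{k\delta}\circ\widetilde{\alpha}$, and together with the routine identities $\widetilde{\alpha}^{-1}=\widetilde{\alpha^{-1}}$ and $\widetilde{k\delta}^{-1}=\widetilde{k^{-1}\delta}$ this gives $\beta^{-1}=\widetilde{k^{-1}\delta}\circ\widetilde{\alpha^{-1}}=\widetilde{\alpha^{-1}}\circ\widetilde{k^{-1}\delta}$. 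Note that the operator $\widetilde{\alpha}\circ\widetilde{k\delta}\circ\Phi_2\circ\widetilde{k^{-1}\delta}\circ\widetilde{\alpha^{-1}}$ occurring in the statement is precisely $\beta\circ\Phi_2\circ\beta^{-1}$, a conjugate of the involution $\Phi_2$ by an automorphism of $D(X,K)$, hence itself an involution; this is what makes the statement meaningful.

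For the ``if'' direction I would use only that the two notions of equivalence are equivalence relations and that equivalence via an inner automorphism implies equivalence. If $\Phi_1$ is equivalent to $\Phi_2':=\beta\circ\Phi_2\circ\beta^{-1}$ via an inner automorphism, then in particular $\Phi_1$ is equivalent to $\Phi_2'$; and $\Phi_2'$ is equivalent to $\Phi_2$, the automorphism $\beta$ itself being a witness since $\beta\circ\Phi_2=\Phi_2'\circ\beta$. By transitivity, $\Phi_1$ and $\Phi_2$ are equivalent.

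For the ``only if'' direction, assume $\Theta\circ\Phi_1=\Phi_2\circ\Theta$ for some $\Theta\in\Aut(D(X,K))$, equivalently $\Phi_1=\Theta^{-1}\circ\Phi_2\circ\Theta$. By Proposition~\ref{propallcomparableAutDP} write $\Theta=\Psi\circ\beta$ with $\Psi\in\Inn(D(X,K))$ and $\beta=\widetilde{\alpha}\circ\widetilde{k\delta}$ as above. Then $\Phi_1=\beta^{-1}\circ\Psi^{-1}\circ\Phi_2\circ\Psi\circ\beta$; conjugating by $\beta$ gives $\beta\circ\Phi_1\circ\beta^{-1}=\Psi^{-1}\circ\Phi_2\circ\Psi$, and a direct rearrangement yields
\[
\beta^{-1}\circ\Phi_2\circ\beta=\Xi\circ\Phi_1\circ\Xi^{-1},\qquad\text{where}\quad\Xi:=\beta^{-1}\circ\Psi\circ\beta .
\]
By normality of $\Inn(D(X,K))$ the automorphism $\Xi$ is inner, and the displayed identity may be read as $\Xi\circ\Phi_1=\bigl(\beta^{-1}\circ\Phi_2\circ\beta\bigr)\circ\Xi$, i.e.\ $\Phi_1$ is equivalent, via the inner automorphism $\Xi$, to $\beta^{-1}\circ\Phi_2\circ\beta$. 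Finally, using $\beta^{-1}=\widetilde{\alpha^{-1}}\circ\widetilde{k^{-1}\delta}$ and $\beta=\widetilde{k\delta}\circ\widetilde{\alpha}$, one gets $\beta^{-1}\circ\Phi_2\circ\beta=\widetilde{\alpha^{-1}}\circ\widetilde{k^{-1}\delta}\circ\Phi_2\circ\widetilde{k\delta}\circ\widetilde{\alpha}$, which is exactly the operator of the statement with the automorphism $\alpha^{-1}$ of $X$ and the scalar $k^{-1}\in K^*$ in the roles of $\alpha$ and $k$.

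I do not anticipate a genuine obstacle: all the substance sits in Proposition~\ref{propallcomparableAutDP}, and what remains is group bookkeeping --- normality of $\Inn(D(X,K))$, the symmetry and transitivity of the equivalence relations, and the commutation/inversion rules for $\widetilde{\alpha}$ and $\widetilde{k\delta}$ coming from \eqref{comutaautparticular}. The only points that need a moment's care are noticing at the start that the conjugated operator is again an involution, and, at the end, keeping track that $\alpha$ is replaced by $\alpha^{-1}$ and $k$ by $k^{-1}$.
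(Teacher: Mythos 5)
Your proposal is correct and follows essentially the same route as the paper: both arguments reduce the statement to Proposition~\ref{propallcomparableAutDP} by factoring the intertwining automorphism as $\Psi\circ\widetilde{\alpha}\circ\widetilde{k\delta}$ with $\Psi$ inner, after which only group-theoretic bookkeeping remains. The paper conjugates in the direction that places $\Psi$ on the outside and so never needs to invoke normality of $\Inn(D(X,K))$ explicitly, whereas your arrangement absorbs $\beta^{-1}\circ\Psi\circ\beta$ via normality and lands on the pair $(\alpha^{-1},k^{-1})$ instead of $(\alpha,k)$; this is an immaterial difference.
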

\begin{proof}
The involutions $\Phi_1$ and $\Phi_2$ on $D(X,K)$ are equivalent if and only if there exists an automorphism $\varphi$ of $D(X,K)$ such that $\Phi_1 = \varphi \circ \Phi_2 \circ \varphi^{-1}$. By Proposition~\ref{propallcomparableAutDP}, $\varphi= \Psi \circ \widetilde{\alpha}\circ \widetilde{k\delta}$, where $\Psi \in \Inn(D(X,K))$, $\alpha$ is an automorphism of $X$ and $k \in K^*$. Thus, $\Phi_1$ and $\Phi_2$ are equivalent if and only if $\Phi_1 = \Psi \circ \widetilde{\alpha} \circ \widetilde{k\delta} \circ \Phi_2 \circ \widetilde{k^{-1}\delta}\circ \widetilde{\alpha^{-1}}\circ \Psi^{-1}$.
\end{proof}

\begin{remark} \label{obsalplambalp}
Let $\alpha$ and $\lambda$ be an automorphism and an involution on $X$, respectively. Then $\alpha\circ\lambda\circ \alpha^{-1}$ is an involution on $X$ and $\widehat{\alpha} \circ \rho_\lambda \circ \widehat{\alpha^{-1}} = \rho_{\alpha\circ\lambda\circ\alpha^{-1}}$. Therefore, $\widetilde{\alpha}\circ \widetilde{\rho_\lambda} \circ \widetilde{\alpha^{-1}} =\widetilde{\rho_{\alpha\circ\lambda\circ\alpha^{-1}}}$, by Remark~\ref{obsCompoExten}.
\end{remark}

\begin{proposition} \label{propConjuInv}
Let $\Phi$ be an involution on  $D(X,K)$ that induces $\lambda$ on $X$, $\alpha$ an automorphism of $X$ and $k \in K^*$. Then the involution $\Phi_1 = \widetilde{\alpha} \circ \widetilde{k\delta} \circ \Phi \circ \widetilde{k^{-1}\delta}\circ \widetilde{\alpha^{-1}}$ induces the involution $\alpha\circ \lambda\circ \alpha^{-1}$ on $X$ and $s(\Phi_1) =  s(\Phi)$.
\end{proposition}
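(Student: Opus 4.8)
The plan is to verify the two claimed equalities directly using the decomposition of $\Phi$ provided by Corollaries~\ref{coroallcomparableAntiauttDP} and~\ref{coroSinal}, together with the commutation relations in~\eqref{comutaantiaut}, \eqref{comutaantiautparticular} and Remark~\ref{obsalplambalp}. Write $\Phi = \Psi \circ \widetilde{\rho_\lambda} \circ \widetilde{s(\Phi)\delta}$ for some $\Psi \in \Inn(D(X,K))$ (so $\lambda = \lambda_\Phi$). First I would compute $\Phi_1$ explicitly: conjugating $\Phi$ by $\widetilde{\alpha}\circ\widetilde{k\delta}$ and using that $\widetilde{k\delta}$ is central-acting (it commutes with $\widetilde{\rho_\lambda}$ and with $\widetilde{\alpha}$ by~\eqref{comutaantiautparticular} and~\eqref{comutaaut}) and that $\widetilde{\alpha}\circ\widetilde{\rho_\lambda}\circ\widetilde{\alpha^{-1}} = \widetilde{\rho_{\alpha\circ\lambda\circ\alpha^{-1}}}$ by Remark~\ref{obsalplambalp}, one gets
$$
\Phi_1 \;=\; \bigl(\widetilde{\alpha}\circ\widetilde{k\delta}\circ\Psi\circ\widetilde{k^{-1}\delta}\circ\widetilde{\alpha^{-1}}\bigr)\circ\widetilde{\rho_{\alpha\circ\lambda\circ\alpha^{-1}}}\circ\widetilde{s(\Phi)\delta}.
$$
Since $\widetilde{\alpha}\circ\widetilde{k\delta}\circ\Psi\circ\widetilde{k^{-1}\delta}\circ\widetilde{\alpha^{-1}}$ is again an inner automorphism of $D(X,K)$ (conjugate of an inner automorphism), and $\alpha\circ\lambda\circ\alpha^{-1}$ is an involution on $X$ by Remark~\ref{obsalplambalp}, this displays $\Phi_1$ in precisely the normal form of Corollary~\ref{coroallcomparableAntiauttDP} with parameters $\alpha\circ\lambda\circ\alpha^{-1}$ and $s(\Phi)$.

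Next I would read off the two conclusions from this normal form. For the induced involution: by the argument in the first paragraph of the proof of Corollary~\ref{coroinduzesinal} (or by Lemma~\ref{lemaphi12} applied to $\Phi_1$), an involution written as $\Psi' \circ \widetilde{\rho_\mu}\circ\widetilde{k'\delta}$ induces $\mu$ on $X$; hence $\lambda_{\Phi_1} = \alpha\circ\lambda\circ\alpha^{-1}$. For the sign: by Corollary~\ref{coroSinal}, the sign is well-defined from any such presentation, so from the displayed form of $\Phi_1$ we read $s(\Phi_1) = s(\Phi)$ directly. Alternatively, if one prefers a self-contained check, one can evaluate $\Phi_1$ on $\begin{bmatrix}0\\\delta\end{bmatrix}$ exactly as in the last part of the proof of Corollary~\ref{coroallcomparableAntiauttDP}: since $\widetilde{\alpha^{-1}}$, $\widetilde{k^{-1}\delta}$ fix $\begin{bmatrix}0\\\delta\end{bmatrix}$ up to the central scalar and $\overline{\rho_\lambda}(\delta)=\delta$, one obtains $\Phi_1\bigl(\begin{bmatrix}0\\\delta\end{bmatrix}\bigr) = \begin{bmatrix}0\\s(\Phi)\delta\end{bmatrix}$, which identifies $s(\Phi_1)=s(\Phi)$.

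The only mild subtlety — and the step I would be most careful with — is checking that $\Phi_1$ really is an involution and that all the $\widetilde{k\delta}$ factors can be pushed past the $\widetilde{\alpha}$ and $\widetilde{\rho_\lambda}$ factors cleanly; this is where one must invoke that $\widetilde{k\delta}$ commutes with $\widetilde{\alpha}$ (from $\alpha(k\delta)=k\delta$, cf.~\eqref{comutaautparticular}) and with $\widetilde{\rho_\lambda}$ (from~\eqref{comutaantiautparticular}), and that $\widetilde{k^{-1}\delta} = (\widetilde{k\delta})^{-1}$. That $\Phi_1$ is an involution follows formally: it is an anti-automorphism as a composite of an anti-automorphism with automorphisms, and $\Phi_1^2 = \widetilde{\alpha}\circ\widetilde{k\delta}\circ\Phi^2\circ\widetilde{k^{-1}\delta}\circ\widetilde{\alpha^{-1}} = \Id_{D(X,K)}$ since $\Phi^2 = \Id_{D(X,K)}$. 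Everything else is routine bookkeeping with the commutation relations already established in the excerpt.
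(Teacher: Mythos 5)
Your proposal is correct and follows essentially the same route as the paper's proof: decompose $\Phi$ via Corollary~\ref{coroallcomparableAntiauttDP}, push the $\widetilde{k\delta}$ factors through with \eqref{comutaautparticular} and \eqref{comutaantiautparticular}, identify $\widetilde{\alpha}\circ\widetilde{\rho_\lambda}\circ\widetilde{\alpha^{-1}}=\widetilde{\rho_{\alpha\circ\lambda\circ\alpha^{-1}}}$ via Remark~\ref{obsalplambalp}, observe that the remaining prefix is inner by normality of $\Inn(D(X,K))$, and read off both conclusions from Corollary~\ref{coroSinal}. The only cosmetic difference is that the paper first expands $\Psi=\widetilde{\Psi_f}\circ\widetilde{D_j}$ (Remark~\ref{obsInnPsiepsitiu}) and cancels the scalars explicitly, whereas you keep $\Psi$ abstract and treat the whole prefix as a conjugate of an inner automorphism.
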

\begin{proof}
By Corollary~\ref{coroallcomparableAntiauttDP} and Remark~\ref{obsInnPsiepsitiu}, $\Phi = \widetilde{\Psi_{f}} \circ \widetilde{D_j} \circ \widetilde{\rho_{\lambda}} \circ \widetilde{ s(\Phi)\delta}$ for some $f \in U(FI(X,K))$ and $j \in I(X,K)$. Thus, by \eqref{comutaautparticular}, \eqref{comutaaut}, \eqref{comutaantiautparticular} and Remark~\ref{obsalplambalp},
\begin{align*} \label{eqinducesconjulambd}
 \Phi_1 &= \widetilde{\alpha} \circ \widetilde{k\delta} \circ \widetilde{\Psi_{f}} \circ \widetilde{D_j} \circ \widetilde{\rho_{\lambda}} \circ \widetilde{ s(\Phi)\delta} \circ \widetilde{k^{-1}\delta}\circ \widetilde{\alpha^{-1}}\nonumber\\
 &= \widetilde{\alpha} \circ \widetilde{\Psi_{f}} \circ \widetilde{k\delta}  \circ \widetilde{D_j} \circ \widetilde{\rho_{\lambda}} \circ \widetilde{k^{-1}\delta}\circ \widetilde{\alpha^{-1}}\circ \widetilde{ s(\Phi)\delta} \nonumber\\
 &= \widetilde{\alpha}  \circ \widetilde{\Psi_{f}} \circ \widetilde{k D_j}\circ \widetilde{k\delta} \circ \widetilde{\rho_{\lambda}} \circ \widetilde{k^{-1}\delta}\circ \widetilde{\alpha^{-1}}\circ \widetilde{ s(\Phi)\delta}\nonumber\\
 &=  \widetilde{\alpha}  \circ \widetilde{\Psi_{f}} \circ \widetilde{ D_{kj}} \circ \widetilde{\rho_{\lambda}} \circ \widetilde{\alpha^{-1}}\circ \widetilde{ s(\Phi)\delta}\nonumber\\
 &=  (\widetilde{\alpha}  \circ \widetilde{\Psi_{f}} \circ \widetilde{ D_{kj}}\circ \widetilde{\alpha^{-1}}) \circ (\widetilde{\alpha} \circ \widetilde{\rho_{\lambda}} \circ \widetilde{\alpha^{-1}})\circ \widetilde{ s(\Phi)\delta}\nonumber \\
 &= (\widetilde{\alpha}  \circ \widetilde{\Psi_{f}} \circ \widetilde{ D_{kj}}\circ \widetilde{\alpha^{-1}}) \circ \widetilde{\rho_{\alpha\circ \lambda \circ\alpha^{-1}}}\circ \widetilde{ s(\Phi)\delta}.
 \end{align*}
By \cite[Proposition~7]{DW}, $\widetilde{\Psi_{f}} \circ \widetilde{ D_{kj}} \in \Inn(D(X,K))$. Since $\Inn(D(X,K))$ is a normal subgroup of $\Aut(D(X,K))$,  there exists $\theta' \in U(D(X,K))$ such that $\Psi_{\theta'} = \widetilde{\alpha}  \circ \widetilde{\Psi_{f}} \circ \widetilde{ D_{kj}}\circ \widetilde{\alpha^{-1}}$. Therefore, $\Phi_1 = \Psi_{\theta'} \circ   \widetilde{\rho_{\alpha\circ\lambda \circ \alpha^{-1}}} \circ \widetilde{ s(\Phi)\delta}$ and so $\lambda_{\Phi_1} = \alpha\circ\lambda \circ \alpha^{-1}$ and $ s(\Phi_1) =  s(\Phi)$, by Corollary~\ref{coroSinal}.
\end{proof}

In \cite[p.1953]{BFS11}, the authors define an equivalence relation $\sim$ on the set of all involutions on $X$ as follows: $\lambda \sim \mu$ if there exists an automorphism $\alpha$ of $X$ such that $\alpha\circ \lambda = \mu \circ \alpha$.

\begin{corollary} \label{coroInduzesinalGeral}
\begin{enumerate}
\item[(i)] If the involutions $\Phi_1$ and $\Phi_2$ on $D(X,K)$ are equivalent, then  $s(\Phi_1)=s(\Phi_2)$ and  $\lambda_{\Phi_1} \sim \lambda_{\Phi_2}$.
\item[(ii)] If the involutions $\lambda_1$ and $\lambda_2$ on $X$ are equivalent and $\Phi_1$ is an involution on $D(X,K)$ that induces $\lambda_1$, then $\Phi_1$ is equivalent to some involution $\Phi_2$ on $D(X,K)$ that induces $\lambda_2$.
\end{enumerate}
\end{corollary}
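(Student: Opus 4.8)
Both parts are formal consequences of three facts already established: Theorem~\ref{teoeqivaGeral}, which reduces equivalence of involutions on $D(X,K)$ to equivalence \emph{via inner automorphism} of a twisted version of one of them; Proposition~\ref{propConjuInv}, which tells us exactly how the twisting $\Phi \mapsto \widetilde{\alpha}\circ\widetilde{k\delta}\circ\Phi\circ\widetilde{k^{-1}\delta}\circ\widetilde{\alpha^{-1}}$ acts on the induced involution on $X$ and on the sign; and Corollary~\ref{coroinduzesinal}, which says that two involutions on $D(X,K)$ (each written in the standard form $\Psi\circ\widetilde{\rho_{\lambda}}\circ\widetilde{k\delta}$ provided by Corollaries~\ref{coroallcomparableAntiauttDP} and \ref{coroSinal}) that are equivalent via inner automorphism must have the same induced involution on $X$ and the same sign. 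We will also use that $\sim$ is an equivalence relation on the set of involutions of $X$, in particular that it is symmetric.

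\textbf{Proof of (i).} Suppose $\Phi_1$ and $\Phi_2$ are equivalent. By Theorem~\ref{teoeqivaGeral} there are an automorphism $\alpha$ of $X$ and $k\in K^*$ such that $\Phi_1$ is equivalent, via inner automorphism, to $\Phi_2' := \widetilde{\alpha}\circ\widetilde{k\delta}\circ\Phi_2\circ\widetilde{k^{-1}\delta}\circ\widetilde{\alpha^{-1}}$. By Proposition~\ref{propConjuInv}, $\Phi_2'$ is an involution with $\lambda_{\Phi_2'}=\alpha\circ\lambda_{\Phi_2}\circ\alpha^{-1}$ and $s(\Phi_2')=s(\Phi_2)$. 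Now write $\Phi_1$ and $\Phi_2'$ in the standard form of Corollaries~\ref{coroallcomparableAntiauttDP} and \ref{coroSinal} and apply Corollary~\ref{coroinduzesinal}: since $\Phi_1$ and $\Phi_2'$ are equivalent via inner automorphism, $\lambda_{\Phi_1}=\lambda_{\Phi_2'}$ and $s(\Phi_1)=s(\Phi_2')$. Chaining the two sets of equalities gives $s(\Phi_1)=s(\Phi_2)$ and $\lambda_{\Phi_1}=\alpha\circ\lambda_{\Phi_2}\circ\alpha^{-1}$, i.e. $\alpha\circ\lambda_{\Phi_2}=\lambda_{\Phi_1}\circ\alpha$, which is precisely $\lambda_{\Phi_2}\sim\lambda_{\Phi_1}$; by symmetry of $\sim$, $\lambda_{\Phi_1}\sim\lambda_{\Phi_2}$.

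\textbf{Proof of (ii).} Suppose $\lambda_1$ and $\lambda_2$ are equivalent involutions on $X$ and $\Phi_1$ induces $\lambda_1$. Choose an automorphism $\alpha$ of $X$ with $\alpha\circ\lambda_1=\lambda_2\circ\alpha$, so that $\lambda_2=\alpha\circ\lambda_1\circ\alpha^{-1}$. Put $\Phi_2 := \widetilde{\alpha}\circ\Phi_1\circ\widetilde{\alpha^{-1}}$ (this is the case $k=1$ of Proposition~\ref{propConjuInv}, since $\widetilde{\delta}=\Id_{D(X,K)}$). As $\widetilde{\alpha}$ is an automorphism of $D(X,K)$, $\Phi_2$ is an involution equivalent to $\Phi_1$, and by Proposition~\ref{propConjuInv} it induces $\alpha\circ\lambda_1\circ\alpha^{-1}=\lambda_2$ on $X$, as required.

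\textbf{Main point of care.} There is no real obstacle here; the only thing to watch is the bookkeeping: one must first bring $\Phi_2'$ into the standard form $\Psi\circ\widetilde{\rho_{\lambda}}\circ\widetilde{k\delta}$ before invoking Corollary~\ref{coroinduzesinal}, and one must keep track of which side the automorphism $\alpha$ acts on so that the resulting identity is read correctly as a $\sim$-relation, using that $\sim$ is symmetric to orient it as in the statement.
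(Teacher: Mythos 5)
Your proposal is correct and follows essentially the same route as the paper: part (i) combines Theorem~\ref{teoeqivaGeral}, Proposition~\ref{propConjuInv} and Corollary~\ref{coroinduzesinal} in exactly the same way, and part (ii) conjugates $\Phi_1$ by $\widetilde{\alpha}$ and invokes Proposition~\ref{propConjuInv}. The only difference is cosmetic: you make explicit the orientation of the relation $\sim$ and its symmetry, which the paper leaves implicit.
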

\begin{proof}
(i) Suppose the involutions  $\Phi_1$ and $\Phi_2$ on $D(X,K)$ are equivalent. By Theorem~\ref{teoeqivaGeral}, there exist an automorphism $\alpha$ of $X$ and $k\in K^*$ such that $\Phi_1$ is equivalent to $\Phi_3= \widetilde{\alpha} \circ \widetilde{k\delta} \circ \Phi_2 \circ \widetilde{k^{-1}\delta}\circ \widetilde{\alpha^{-1}}$, via inner automorphism. By Corollary~\ref{coroinduzesinal}, $\lambda_{\Phi_1}= \lambda_{\Phi_3}$ and $s(\Phi_1)= s(\Phi_3)$. On the other hand, $\lambda_{\Phi_3}= \alpha\circ \lambda_{\Phi_2}\circ \alpha^{-1}$ and $s(\Phi_3)= s(\Phi_2)$, by Proposition~\ref{propConjuInv}. Therefore, $\lambda_{\Phi_1} = \alpha\circ \lambda_{\Phi_2}\circ\alpha^{-1}$ and $s(\Phi_1)= s(\Phi_2)$.

(ii) Let $\alpha$ be an automorphism of $X$ such that $\lambda_2 = \alpha \circ \lambda_1 \circ \alpha^{-1}$ and consider the involution $\Phi_2 = \widetilde{\alpha} \circ \Phi_1 \circ \widetilde{\alpha^{-1}}$ on $D(X,K)$. Then $\Phi_1$ and $\Phi_2$ are equivalent and, by Proposition~\ref{propConjuInv}, $\Phi_2$ induces $\alpha \circ \lambda_1 \circ \alpha^{-1} = \lambda_2$.
\end{proof}

The following three theorems tell us how to proceed to determine the equivalence class of an involution on $D(X,K)$.

\begin{theorem} \label{teoclassigeralnoFP}
Let $\lambda$ be an involution on $X$ for which there is no fixed points in $X$. Then the involutions $\widetilde{\rho_\lambda}, \widetilde{\rho_\lambda} \circ \widetilde{-\delta}, \widetilde{\sigma_\lambda}$ and $\widetilde{\sigma_\lambda} \circ \widetilde{-\delta}$ are pairwise not equivalent. Furthermore, each involution on $D(X,K)$ that induces $\lambda$ on $X$ is equivalent to one of them.
\end{theorem}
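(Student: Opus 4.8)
The plan is to reduce the general classification to the (already established) classification of involutions on $FI(X,K)$, using Theorem~\ref{teoeqivaGeral} and Proposition~\ref{propequivalenciainner}. The second assertion is immediate from Theorem~\ref{teoclassivazio}: if $\Phi$ is an involution on $D(X,K)$ inducing $\lambda$, then $\Phi\circ\Psi=\Psi\circ(\widetilde{\rho_\lambda}\circ\widetilde{s(\Phi)\delta})$ or $\Phi\circ\Psi=\Psi\circ(\widetilde{\sigma_\lambda}\circ\widetilde{s(\Phi)\delta})$ for some $\Psi\in\Inn(D(X,K))$, and since $s(\Phi)\in\{-1,1\}$ and $\widetilde{\delta}=\Id_{D(X,K)}$, the right-hand factor is one of the four listed involutions; hence $\Phi$ is equivalent (indeed via an inner automorphism) to one of them. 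Since it was already observed after Theorem~\ref{teoclassivazio} that the four are pairwise not equivalent \emph{via inner automorphisms}, what remains is to upgrade this to non-equivalence in the general sense.

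All four involutions induce $\lambda$ on $X$, and $s(\widetilde{\rho_\lambda})=s(\widetilde{\sigma_\lambda})=1$, $s(\widetilde{\rho_\lambda}\circ\widetilde{-\delta})=s(\widetilde{\sigma_\lambda}\circ\widetilde{-\delta})=-1$. By Corollary~\ref{coroInduzesinalGeral}(i) equivalent involutions have equal sign, so it suffices to prove that $\widetilde{\rho_\lambda}\circ\widetilde{k\delta}$ and $\widetilde{\sigma_\lambda}\circ\widetilde{k\delta}$ are not equivalent, for each $k\in\{-1,1\}$. Assume they are. By Theorem~\ref{teoeqivaGeral} there are an automorphism $\alpha$ of $X$ and $m\in K^*$ such that $\widetilde{\rho_\lambda}\circ\widetilde{k\delta}$ is equivalent, via an inner automorphism, to $\Phi_1:=\widetilde{\alpha}\circ\widetilde{m\delta}\circ(\widetilde{\sigma_\lambda}\circ\widetilde{k\delta})\circ\widetilde{m^{-1}\delta}\circ\widetilde{\alpha^{-1}}$. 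By Proposition~\ref{propConjuInv} (applied to $\Phi=\widetilde{\sigma_\lambda}\circ\widetilde{k\delta}$), $\Phi_1$ induces $\alpha\circ\lambda\circ\alpha^{-1}$ on $X$ and $s(\Phi_1)=k$; since $\Phi_1$ is inner-equivalent to $\widetilde{\rho_\lambda}\circ\widetilde{k\delta}$, which induces $\lambda$, Corollary~\ref{coroinduzesinal} gives $\alpha\circ\lambda\circ\alpha^{-1}=\lambda$, whence $\widetilde{\alpha}\circ\widetilde{\rho_\lambda}\circ\widetilde{\alpha^{-1}}=\widetilde{\rho_{\alpha\circ\lambda\circ\alpha^{-1}}}=\widetilde{\rho_\lambda}$ by Remark~\ref{obsalplambalp}.

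Now I would simplify $\Phi_1$. Since $\widetilde{m\delta}$ and $\widetilde{m^{-1}\delta}$ act on $D(X,K)$ only by scalar multiplication on the second coordinate, they commute with $\widetilde{\sigma_\lambda}\circ\widetilde{k\delta}$ and cancel; moreover $\widetilde{k\delta}$ commutes with $\widetilde{\alpha}$ by~\eqref{comutaautparticular}, so $\Phi_1=(\widetilde{\alpha}\circ\widetilde{\sigma_\lambda}\circ\widetilde{\alpha^{-1}})\circ\widetilde{k\delta}$. Put $w'=\widehat{\alpha}(w)\in U(FI(X,K))$ and $\omega'=\begin{bmatrix}w'\\0\end{bmatrix}\in U(D(X,K))$. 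By Remarks~\ref{obsCompoExten} and~\ref{obsInnPsiepsitiu}, $\widetilde{\alpha}\circ\Psi_\omega\circ\widetilde{\alpha^{-1}}=\widetilde{\widehat{\alpha}\circ\Psi_w\circ\widehat{\alpha^{-1}}}=\widetilde{\Psi_{w'}}=\Psi_{\omega'}$; combined with $\widetilde{\sigma_\lambda}=\Psi_\omega\circ\widetilde{\rho_\lambda}$, with $\widetilde{\alpha}\circ\widetilde{\rho_\lambda}\circ\widetilde{\alpha^{-1}}=\widetilde{\rho_\lambda}$ and with $\omega^{-1}=\omega$, this yields $\widetilde{\alpha}\circ\widetilde{\sigma_\lambda}\circ\widetilde{\alpha^{-1}}=\Psi_{\omega'}\circ\widetilde{\rho_\lambda}=\Psi_{\omega'\omega^{-1}}\circ\widetilde{\sigma_\lambda}$, i.e.\ $\Phi_1=\Psi_{\omega'\omega^{-1}}\circ(\widetilde{\sigma_\lambda}\circ\widetilde{k\delta})$. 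The decisive point is that the unit $\omega'\omega^{-1}=\begin{bmatrix}w'w\\0\end{bmatrix}$ is $(\widetilde{\sigma_\lambda}\circ\widetilde{k\delta})$-symmetric: $w$ and $w'$ are diagonal with $w^2=\delta$, and $\rho_\lambda(w')=\widehat{\alpha}(\rho_\lambda(w))=-w'$ since $\widehat{\alpha}$ commutes with $\rho_\lambda$, so $\sigma_\lambda(w'w)=\sigma_\lambda(w)\sigma_\lambda(w')=(-w)(-w')=w'w$, while the second coordinate of $\omega'\omega^{-1}$ is $0$ and is fixed. Then by Lemma~\ref{lemasigmasimetrico} there is $\gamma\in U(D(X,K))$ with $\omega'\omega^{-1}=\gamma\,(\widetilde{\sigma_\lambda}\circ\widetilde{k\delta})(\gamma)$, so $\Phi_1$ is inner-equivalent to $\widetilde{\sigma_\lambda}\circ\widetilde{k\delta}$ by Proposition~\ref{propAutInner}(iv). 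Chaining, $\widetilde{\rho_\lambda}\circ\widetilde{k\delta}$ is inner-equivalent to $\widetilde{\sigma_\lambda}\circ\widetilde{k\delta}$, hence by Proposition~\ref{propequivalenciainner} $\rho_\lambda$ and $\sigma_\lambda$ would be inner-equivalent on $FI(X,K)$ — contradicting the fact recalled after Theorem~\ref{teoclassivazio}. The one genuinely non-routine step is recognizing that conjugating $\widetilde{\sigma_\lambda}$ by an automorphism of $X$ cannot move it out of the ``$\sigma$-type'' class, which is exactly what the identity $\sigma_\lambda(w'w)=w'w$ encodes; everything else is bookkeeping with the scalar twists and a reduction to the known classification of involutions on $FI(X,K)$.
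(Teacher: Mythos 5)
Your proof is correct and follows essentially the same route as the paper: reduce general equivalence to inner equivalence via Theorem~\ref{teoeqivaGeral} and Proposition~\ref{propConjuInv}, separate signs with Corollary~\ref{coroInduzesinalGeral}(i), and derive a contradiction with the known inner non-equivalence of $\rho_\lambda$ and $\sigma_\lambda$ through Proposition~\ref{propequivalenciainner}, with the second assertion coming directly from Theorem~\ref{teoclassivazio}. The only difference is that you conjugate $\widetilde{\sigma_\lambda}\circ\widetilde{k\delta}$, which is preserved only up to the inner factor $\Psi_{\omega'\omega^{-1}}$ and so costs you the extra (correct) appeal to Lemma~\ref{lemasigmasimetrico}, whereas the paper conjugates $\widetilde{\rho_\lambda}$, which is literally fixed because $\widehat{\alpha}\circ\rho_\lambda\circ\widehat{\alpha^{-1}}=\rho_{\alpha\circ\lambda\circ\alpha^{-1}}=\rho_\lambda$, so no extra step is needed.
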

\begin{proof}
By Theorem~\ref{teoeqivaGeral}, $\widetilde{\rho_\lambda}$ and $\widetilde{\sigma_\lambda}$ are equivalent if and only if there exist an automorphism $\alpha$ of $X$ and $k \in K^*$ such that $\widetilde{\sigma_\lambda}$ and $\Phi= \widetilde{\alpha}\circ \widetilde{k\delta} \circ \widetilde{\rho_\lambda} \circ \widetilde{k^{-1}\delta}\circ \widetilde{\alpha^{-1}}$ are equivalent via inner automorphism. By Corollary~\ref{coroinduzesinal} and Proposition~\ref{propConjuInv}, $\lambda = \alpha \circ \lambda \circ \alpha^{-1}$ and, by Remark~\ref{obsalplambalp}, $\widehat{\alpha} \circ \rho_\lambda \circ \widehat{\alpha^{-1}} = \rho_{\alpha\circ \lambda \circ \alpha^{-1}} = \rho_\lambda$, therefore $\widetilde{\alpha} \circ \widetilde{\rho_\lambda} \circ \widetilde{\alpha^{-1}} = \widetilde{\rho_\lambda}$. Note that $\Phi =  \widetilde{\alpha} \circ \widetilde{\rho_\lambda} \circ \widetilde{\alpha^{-1}} = \widetilde{\rho_\lambda}$, by \eqref{comutaantiautparticular}. Thus, $\widetilde{\sigma_\lambda}$ and $\widetilde{\rho_\lambda}$ are equivalent via inner automorphism, which is a contradiction. Therefore, the involutions $\widetilde{\rho_\lambda}, \widetilde{\rho_\lambda} \circ \widetilde{-\delta}, \widetilde{\sigma_\lambda}$ and $\widetilde{\sigma_\lambda} \circ \widetilde{-\delta}$ are pairwise not equivalent, by Proposition~\ref{propequivalenciainner} and Corollary~\ref{coroInduzesinalGeral} (i).

Finally, each involution on $D(X,K)$ that induces $\lambda$ on $X$ is equivalent (via inner automorphism) to one of the involutions  $\widetilde{\rho_\lambda}, \widetilde{\rho_\lambda} \circ \widetilde{-\delta}, \widetilde{\sigma_\lambda}$ and $\widetilde{\sigma_\lambda} \circ \widetilde{-\delta}$, by Theorem~\ref{teoclassivazio}.
\end{proof}

\begin{theorem} \label{teoclassigeralonepoint}
Let $\lambda$ be an involution on $X$ with exactly one fixed point in $X$. Each involution on $D(X,K)$ that induces $\lambda$ on $X$ is equivalent to either $\widetilde{\rho_\lambda}$ or $\widetilde{\rho_\lambda} \circ \widetilde{-\delta}$.
\end{theorem}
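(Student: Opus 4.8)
The plan is to obtain this as the $|X_3| = 1$ specialization of the inner-automorphism classification already established for the case $X_3 \neq \emptyset$; note that ``$\lambda$ has exactly one fixed point'' means precisely $X_3 = \{x_0\}$ for a single $x_0 \in X$.

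First I would take an arbitrary involution $\Phi$ on $D(X,K)$ inducing $\lambda$. By Lemma~\ref{lemaequiRhoEps} and the discussion following it, $\Phi$ is equivalent, via inner automorphism, to $\widetilde{\rho_\epsilon} \circ \widetilde{s(\Phi)\delta}$ for some map $\epsilon \colon X_3 \to K^*$. The key observation is that, since $X_3$ is a singleton, all these $\widetilde{\rho_\epsilon}$ coincide with $\widetilde{\rho_\lambda}$ up to inner equivalence. To see this, let $\epsilon_0 \colon X_3 \to K^*$ be constant equal to $1$; then $u_{\epsilon_0} = \delta$, so $\rho_{\epsilon_0} = \Psi_\delta \circ \rho_\lambda = \rho_\lambda$, and $\chi_{\epsilon_0}(x_0)$ is the identity of $S_K$. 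Choosing $g = \chi_\epsilon(x_0)^{-1} \in S_K$ gives $g\,\chi_\epsilon(x_0) = \chi_{\epsilon_0}(x_0)$, so by Lemma~\ref{lemaEquivalenciarhoeps} the involutions $\rho_\epsilon$ and $\rho_\lambda$ on $FI(X,K)$ are equivalent via inner automorphism. By Proposition~\ref{propequivalenciainner} (the implication (i)$\,\Rightarrow\,$(ii) when $s(\Phi)=1$ and (i)$\,\Rightarrow\,$(iii) when $s(\Phi)=-1$), this upgrades to the statement that $\widetilde{\rho_\epsilon} \circ \widetilde{s(\Phi)\delta}$ and $\widetilde{\rho_\lambda} \circ \widetilde{s(\Phi)\delta}$ are equivalent via inner automorphism.

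Chaining the two equivalences, $\Phi$ is equivalent, via inner automorphism, to $\widetilde{\rho_\lambda} \circ \widetilde{s(\Phi)\delta}$. Since $s(\Phi) \in \{-1,1\}$ and $\widetilde{\delta} = \Id_{D(X,K)}$, this target equals $\widetilde{\rho_\lambda}$ if $s(\Phi) = 1$ and $\widetilde{\rho_\lambda} \circ \widetilde{-\delta}$ if $s(\Phi) = -1$, which is exactly the claimed dichotomy (in fact with the stronger conclusion that the equivalence is inner). Alternatively, one may simply cite Corollary~\ref{corocardinality}: for $|X_3| = 1$ it gives $2|S_K|^{0} = 2$ inner-equivalence classes of involutions inducing $\lambda$, and $\widetilde{\rho_\lambda}$, $\widetilde{\rho_\lambda} \circ \widetilde{-\delta}$ clearly lie in different ones, their signs being $1$ and $-1$ (Corollary~\ref{coroInduzesinalGeral}(i)), so these two represent them.

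I do not anticipate a genuine obstacle here: the substantive analysis --- normalizing an arbitrary $\Phi$ to a model $\widetilde{\rho_\epsilon} \circ \widetilde{s(\Phi)\delta}$, and the parametrization of inner-equivalence classes by $S_K^{X_3} \times \{-1,1\}$ --- is already in place. The only new point to articulate is that left translation acts transitively on $S_K$, which is what forces the $|X_3| = 1$ case to collapse to just the two options above.
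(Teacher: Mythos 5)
Your proposal is correct and rests on exactly the same machinery as the paper's proof (Lemma~\ref{lemaequiRhoEps}, the $S_K^{X_3}\times\{-1,1\}$ parametrization, and Corollary~\ref{coroInduzesinalGeral}(i)); indeed your ``alternative'' via Corollary~\ref{corocardinality} is the paper's argument verbatim, and your primary route merely unwinds that count into an explicit normalization using the transitivity of left translation on $S_K$. Both yield the same (slightly stronger) conclusion that the equivalence is via an inner automorphism.
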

\begin{proof}
The involutions $\widetilde{\rho_\lambda}$ and $\widetilde{\rho_\lambda} \circ \widetilde{-\delta}$ are not equivalent, by Corollary~\ref{coroInduzesinalGeral} (i). By Corollary~\ref{corocardinality}, the number of equivalence classes of involutions on $D(X,K)$ inducing $\lambda$, via inner automorphism, is $2|S_K|^{1-1}=2$. Therefore, each involution on $D(X,K)$ that induces $\lambda$ on $X$ is equivalent (via inner automorphism) to either $\widetilde{\rho_\lambda}$ or $\widetilde{\rho_\lambda} \circ \widetilde{-\delta}$.
\end{proof}

\begin{remark}
Let $\lambda$ be an involution on $X$ and $\alpha$ an automorphism of $X$ such that $\alpha \circ \lambda = \lambda \circ \alpha$. Let $(X_1,X_2, X_3)$ be a $\lambda$-decomposition of $X$. If $x \in X_3$, then $\alpha(x) \in X_3$ (see \cite[Lemma~29]{BFS11}). Thus, if $\epsilon: X_3 \to S_K$ is a map, then $\epsilon \circ \alpha :X_3 \to S_K$ is well defined.
\end{remark}

\begin{theorem} \label{teoclassigeralfinal}
Let $\lambda$ be an involution on $X$ for which there is more than one fixed point in $X$. Let $\epsilon_1,\epsilon_2: X_3 \to K^*$ be maps such that $\rho_{\epsilon_1}$ and  $\rho_{\epsilon_2}$ are not equivalent via inner automorphism. Then $\rho_{\epsilon_1}$ and $\rho_{\epsilon_2}$ are equivalent if and only if there exists an automorphism $\alpha:X \to X$ such that $\alpha \circ \lambda = \lambda\circ \alpha$, and $\rho_{\epsilon_2\circ \alpha}$ and $\rho_{\epsilon_1}$ are equivalent, via inner automorphism.
\end{theorem}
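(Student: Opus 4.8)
The plan is to turn a general equivalence into an equivalence via inner automorphism by conjugating with an automorphism of $FI(X,K)$ coming from $X$, the key point being that such a conjugation sends $\rho_\epsilon$ to another involution of the same shape. The engine of the argument is the identity $\widehat{\alpha}\circ\rho_\epsilon\circ\widehat{\alpha^{-1}}=\rho_{\epsilon\circ\alpha^{-1}}$, valid for every automorphism $\alpha$ of $X$ with $\alpha\circ\lambda=\lambda\circ\alpha$ and every map $\epsilon\colon X_3\to K^*$. To prove it I would write $\rho_\epsilon=\Psi_{u_\epsilon}\circ\rho_\lambda$, move $\widehat{\alpha}$ across the inner factor via $\widehat{\alpha}\circ\Psi_{u_\epsilon}=\Psi_{\widehat{\alpha}(u_\epsilon)}\circ\widehat{\alpha}$, use Remark~\ref{obsalplambalp} to get $\widehat{\alpha}\circ\rho_\lambda\circ\widehat{\alpha^{-1}}=\rho_{\alpha\circ\lambda\circ\alpha^{-1}}=\rho_\lambda$, and then check that $\widehat{\alpha}(u_\epsilon)=u_{\epsilon\circ\alpha^{-1}}$. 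The last equality uses that $\alpha$ permutes $X_3$ (the Remark immediately preceding the statement) and hence also permutes $X_1\cup X_2$, so $\widehat{\alpha}(u_\epsilon)$ is again a diagonal unit, equal to $1$ off $X_3$ and to $\epsilon(\alpha^{-1}(x))$ on $X_3$. Thus $\widehat{\alpha}\circ\rho_\epsilon\circ\widehat{\alpha^{-1}}=\Psi_{u_{\epsilon\circ\alpha^{-1}}}\circ\rho_\lambda=\rho_{\epsilon\circ\alpha^{-1}}$.

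For the \emph{if} direction, assume there is an automorphism $\alpha$ of $X$ with $\alpha\circ\lambda=\lambda\circ\alpha$ such that $\rho_{\epsilon_2\circ\alpha}$ is equivalent to $\rho_{\epsilon_1}$ via inner automorphism. Applying the identity above with $\alpha^{-1}$ in place of $\alpha$ gives $\rho_{\epsilon_2\circ\alpha}=\widehat{\alpha^{-1}}\circ\rho_{\epsilon_2}\circ(\widehat{\alpha^{-1}})^{-1}$, so $\rho_{\epsilon_2}$ and $\rho_{\epsilon_2\circ\alpha}$ are equivalent; composing this with the inner equivalence between $\rho_{\epsilon_2\circ\alpha}$ and $\rho_{\epsilon_1}$ (equivalence of involutions being transitive) shows that $\rho_{\epsilon_1}$ and $\rho_{\epsilon_2}$ are equivalent.

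For the \emph{only if} direction, suppose $\eta\circ\rho_{\epsilon_1}=\rho_{\epsilon_2}\circ\eta$ for some $\eta\in\Aut(FI(X,K))$. By Theorem~\ref{DecoAutFIP} and the standing assumption $\Mult(FI(X,K))\subseteq\Inn(FI(X,K))$, I would write $\eta=\Psi\circ\widehat{\alpha}$ with $\Psi\in\Inn(FI(X,K))$ and $\alpha$ an automorphism of $X$, so that $\rho_{\epsilon_1}$ is equivalent, via inner automorphism, to $\widehat{\alpha}\circ\rho_{\epsilon_2}\circ\widehat{\alpha^{-1}}$. Arguing as in the proof of Corollary~\ref{coroinduzesinal} (by Proposition~\ref{propAutInner}(iv) an inner equivalence makes the two involutions differ by an inner automorphism, which does not alter the induced involution on $X$), both sides induce the same involution on $X$: the left side induces $\lambda$ (as $\rho_{\epsilon_1}$ is an inner perturbation of $\rho_\lambda$), while $\widehat{\alpha}\circ\rho_{\epsilon_2}\circ\widehat{\alpha^{-1}}=\Psi_{\widehat{\alpha}(u_{\epsilon_2})}\circ\rho_{\alpha\circ\lambda\circ\alpha^{-1}}$ induces $\alpha\circ\lambda\circ\alpha^{-1}$. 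Hence $\alpha\circ\lambda\circ\alpha^{-1}=\lambda$, i.e. $\alpha$ commutes with $\lambda$, and the identity of the first paragraph now gives $\widehat{\alpha}\circ\rho_{\epsilon_2}\circ\widehat{\alpha^{-1}}=\rho_{\epsilon_2\circ\alpha^{-1}}$. Therefore $\rho_{\epsilon_2\circ\alpha^{-1}}$ is equivalent to $\rho_{\epsilon_1}$ via inner automorphism, and $\alpha^{-1}$ (which also commutes with $\lambda$) is the automorphism required by the statement.

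The only genuine difficulty is bookkeeping: keeping straight whether the automorphism produced is $\alpha$ or $\alpha^{-1}$, and pinning down $\widehat{\alpha}(u_\epsilon)=u_{\epsilon\circ\alpha^{-1}}$. Here the hypothesis $\alpha\circ\lambda=\lambda\circ\alpha$ is exactly what makes $u_{\epsilon\circ\alpha^{-1}}$ (and the composite $\epsilon_2\circ\alpha$ in the statement) meaningful, since it forces $\alpha(X_3)=X_3$; everything else is formal manipulation of the decomposition $\rho_\epsilon=\Psi_{u_\epsilon}\circ\rho_\lambda$ together with the already-established commutation rules. Finally, by Remark~\ref{obsCompoExten} and Proposition~\ref{propequivalenciainner} the inner-equivalence statements transfer between $FI(X,K)$ and $D(X,K)$, and a general equivalence on $D(X,K)$ reduces, via Theorem~\ref{teoeqivaGeral} and Proposition~\ref{propallcomparableAutDP}, to conjugating $\widetilde{\rho_{\epsilon_2}}$ by $\widetilde{\alpha}\circ\widetilde{k\delta}$; the factor $\widetilde{k\delta}$ is harmless because $\rho_{\epsilon_2}$ fixes scalar multiples of $\delta$, so it commutes past $\widetilde{\rho_{\epsilon_2}}$ and cancels. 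So the same argument, with $\widetilde{\alpha}$ in place of $\widehat{\alpha}$, yields the corresponding statement for the involutions $\widetilde{\rho_{\epsilon_1}}$ and $\widetilde{\rho_{\epsilon_2}}$ on $D(X,K)$, which is how this result feeds the general classification.
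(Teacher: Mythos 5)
Your proof is correct and follows essentially the same route as the paper's, which simply defers to the proof of \cite[Theorem~31]{BFS11}: decompose the equivalence automorphism of $FI(X,K)$ as an inner automorphism composed with $\widehat{\alpha}$ (using $\Mult(FI(X,K))\subseteq\Inn(FI(X,K))$), deduce $\alpha\circ\lambda=\lambda\circ\alpha$ by comparing induced involutions on $X$, and invoke the conjugation identity $\widehat{\alpha}\circ\rho_{\epsilon}\circ\widehat{\alpha^{-1}}=\rho_{\epsilon\circ\alpha^{-1}}$. The only cosmetic difference is that you establish the auxiliary fact (that conjugation by $\widehat{\alpha}$ replaces the induced involution $\lambda$ by $\alpha\circ\lambda\circ\alpha^{-1}$) directly at the level of $FI(X,K)$ via Remark~\ref{obsalplambalp}, whereas the paper obtains it by lifting to $D(X,K)$, applying Proposition~\ref{propConjuInv}, and descending again via Theorem~\ref{teoDecinvDP}.
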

\begin{proof}
Firstly, note that if $\rho$ is an involution on $FI(X,K)$ that induces $\lambda$ on $X$ and $\alpha$ is an automorphism of $X$, then $\widetilde{\alpha}\circ \widetilde{\rho} \circ \widetilde{\alpha^{-1}}$ induces $\alpha\circ \lambda\circ\alpha^{-1}$ on $X$, by Proposition~\ref{propConjuInv}. Since $\widetilde{\alpha}\circ \widetilde{\rho} \circ \widetilde{\alpha^{-1}} = \begin{bmatrix}
\widehat{\alpha}\circ \rho \circ \widehat{\alpha^{-1}}& 0\\ 0 & \overline{\alpha}\circ \overline{\rho} \circ \overline{\alpha^{-1}}&
\end{bmatrix}$, $\widehat{\alpha}\circ \rho \circ \widehat{\alpha^{-1}}$ induces $\alpha\circ \lambda\circ\alpha^{-1}$ on $X$, by Theorem~\ref{teoDecinvDP}.

Now, the proof is the same of \cite[Theorem~31]{BFS11} just replacing Corollary~6 and Theorem~25 from \cite{BFS11} by Theorem~5.1 and Theorem~5.6 from \cite{BFS14}, respectively, and \cite[Proposition~26]{BFS11} by the result in the paragraph above.
\end{proof}

\section*{Acknowledgments}
The second author was financed in part by the Coordena\c{c}\~{a}o de Aperfei\c{c}oamento de Pessoal de N\'{i}vel Superior - Brasil (CAPES) - Finance 001.


\begin{thebibliography}{99}

\bibitem{BFS11} R.~Brusamarello, E.Z.~Fornaroli, E.A.~Santulo Jr., Classification of involutions on incidence algebras, \textit{Comm.~Algebra} 39(6) (2011) 1941--1955. https://doi.org/10.1080/00927872.2010.480958.

\bibitem{BFS12} R.~Brusamarello, E.Z.~Fornaroli, E.A.~Santulo Jr., Anti-automorphisms and involutions on (finitary) incidence algebras, \textit{Linear Multilinear Algebra} 60(2) (2012) 181--188. https://doi.org/10.1080/03081087.2011.576393.

\bibitem{BFS14} R.~Brusamarello, E.Z.~Fornaroli, E.A.~Santulo Jr., Classification of involutions on finitary incidence algebras, \textit{ Internat.~J.~Algebra~Comput.} 24(8) (2014), 1085--1098. https://doi.org/10.1142/S0218196714500477.

\bibitem{BMUL} R.~Brusamarello, E.Z.~Fornaroli, E.A.~Santulo Jr., Multiplicative automorphisms of incidence algebras, \textit{Comm.~Algebra} 43(2) (2015) 726--736. https://doi.org/10.1080/00927872.2013.847951.

\bibitem{BL} R.~Brusamarello, D.W.~Lewis, Automorphisms and involutions on incidence algebras, \textit{Linear Multilinear Algebra} 59(11) (2011) 1247--1267. https://doi.org/10.1080/03081087.2010.496113.

\bibitem{DKL} O.M.~Di Vincenzo, P.~Koshlukov, R.~La Scala, Involutions for upper triangular matrix algebras, \textit{Adv.~in Appl.~Math.} 37(4) (2006) 541--568. https://doi.org/10.1016/j.aam.2005.07.004.

\bibitem{DMH} M.~Dugas, Homomorphisms of finitary incidence algebras, \textit{Comm.~Algebra} 40(7) (2012) 2373--2384. https://doi.org/10.1080/00927872.2012.662708.

\bibitem{DW} M.~Dugas, B.~Wagner, Finitary incidence algebras and idealizations, \textit{Linear Multilinear Algebra} 64(10) (2016) 1936--1951. https://doi.org/10.1080/03081087.2015.1127886.

\bibitem{ER} E.Z.~Fornaroli, R.E.M. Pezzott, Additive derivations of incidence algebras, \textit{Comm.~Algebra} 49(4) (2021) 1816--1828. https://doi.org/10.1080/00927872.2020.1854775.

\bibitem{AutK} N.S.~Khripchenko, Automorphisms of finitary incidence rings, \textit{Algebra Discrete Math} 9(2) (2010) 78--97.

\bibitem{KD} N.S.~Khripchenko, Derivations of finitary incidence rings, \textit{Comm.~Algebra} 40(7) (2012) 2503--2522. https://doi.org/10.1080/00927872.2011.580441.

\bibitem{KN} N.S.~Khripchenko, B.V.~Novikov, Finitary incidence algebras, \textit{Comm.~Algebra} 37(5) (2009) 1670--1676. https://doi.org/10.1080/00927870802210019.

\bibitem{Rota} G.-C.~Rota, On the foundations of combinatorial theory. I. Theory of M{\"o}bius functions, \textit{Z.~Wahrscheinlichkeitstheorie und Verw.~Gebiete} 2 (1964) 340--368. https://doi.org/10.1007/BF00531932.

\bibitem{Scharlau} W.~Scharlau, Automorphisms and involutions of incidence algebras, \textit{Lectures Notes in Mathematics} 488 (1975) 340--350. https://doi.org/10.1007/BFb0081233.

\bibitem{Spiegel05} E.~Spiegel, Involutions in incidence algebras, \textit{Linear Algebra Appl.} 405 (2005) 155--162. https://doi.org/10.1016/j.laa.2005.03.003.

\bibitem{Spiegel08} E.~Spiegel, Upper-triangular embeddings of incidence algebras with involution, \textit{Comm.~Algebra} 36(5) (2008) 1675--1681. https://doi.org/10.1080/00927870801937224.

\bibitem{SO97} E.~Spiegel, C.J.~O'Donnell, \textit{Incidence Algebras}, Marcel Dekker, New York, 1997.

\bibitem{stanley} R.P.~Stanley, Structure of incidence algebras and their automorphism groups, \textit{Bull.~Amer.~Math.~Soc.} 76 (1970) 1236--1239. https://doi.org/10.1090/S0002-9904-1970-12617-9.

\end{thebibliography}
\end{document}